\theoremstyle{plain}
\newtheorem{theorem}{Theorem}[section]
\newtheorem{lemma}[theorem]{Lemma}
\newtheorem{corollary}[theorem]{Corollary}
\newtheorem{hypothesis}{Hypothesis}
\theoremstyle{remark}
\newtheorem{definition}[theorem]{Definition}
\newtheorem{remark}{Remark}
\newcommand{\bs}[1]{\boldsymbol{#1}}
\newcommand{\proj}{P_{E_D}}
\newcommand{\pertur}{\bar{\psi}_D}
\newcommand{\perturb}{\bar{\bs{\psi}}_D}
\newcommand{\hperturb}{ \lambda^\intercal \bs{h}_N }
\newcommand{\abs}[1]{\left\lvert #1 \right\rvert}
\newcommand{\forward}{\mathcal{G}}
\newcommand{\truth}{\theta_0}
\newcommand{\plim}{\stackrel{P}{\longrightarrow}}
\newcommand{\dlim}{\stackrel{d}{\longrightarrow}}
\newcommand{\regset}{\Theta_{N, M}}
\newcommand{\renorm}{i_D^{\frac{1}{2}}}
\newcommand{\lin}{\mathbb{I}_{\truth}}
\newcommand{\fisher}{\mathcal{I}}
\newcommand{\fishermatrix}{\mathcal{I}_D}
\newcommand{\rk}{\mathbb{R}^k}
\newcommand{\rkbyk}{\mathbb{R}^{k \times k}}
\newcommand{\h}{\mathbb{H}}
\newcommand{\ip}[3]{\left\langle #1, #2 \right\rangle_{#3}}
\newcommand{\norm}[2]{\left\Vert #1 \right\Vert_{#2}}
\newcommand{\hprod}[2]{\langle #1, #2 \rangle_{\h}}
\newcommand{\rknorm}[1]{\Vert #1 \Vert_{\rk}}
\newcommand{\eigmin}[1]{\lambda_{\text{min}}\left( #1 \right)}
\newcommand{\eigmax}[1]{\lambda_{\text{max}}\left( #1 \right)}
\newcommand{\Ptheta}{\mathbb{P}_{\truth}}
\newcommand{\Etheta}{\mathbb{E}_{\truth}}
\newcommand{\Vtheta}{\mathbb{V}_{\truth}}
\newcommand{\lr}[1]{\left( #1 \right)}
\newcommand{\curly}[1]{\left\{ #1 \right \}}
\newcommand{\sq}[1]{\left[ #1 \right]}
\newcommand{\likelihood}[1]{\ell_N\left( #1 \right)}
\newcommand{\posterior}[1]{\Pi_N\left( #1 \mid \mathcal{D}_N \right)}
\newcommand{\prior}[1]{\Pi_N\left( #1 \right)}
\newcommand{\Epost}[1]{\mathbb{E}[#1 \mid \mathcal{D}_N]}
\newcommand{\Vpost}[1]{\mathbb{V}[#1 \mid \mathcal{D}_N]}
\newcommand{\Epostreg}[2]{\mathbb{E}^{#1}[#2 \mid \mathcal{D}_N]}
\newcommand{\pspace}{\mathcal{H}_N}
\newcommand{\qform}[3]{\big(#1)^\intercal #2 \big(#3 \big)}
\newcommand{\variance}{\hat{\Sigma}_N}
\newcommand{\bigpar}[1]{\big( #1 \big)}
\begin{document}

\begin{frontmatter}
\title{Valid Credible Ellipsoids for Linear Functionals by a Renormalized Bernstein-von Mises Theorem}
\runtitle{Valid Credible Ellipsoids for Linear Functionals}

\begin{aug}
\author[A]{\fnms{Gustav}~\snm{Rømer}\ead[label=e1]{cgr32@cam.ac.uk}}
\address[A]{Department of Pure Mathematics and Mathematical Statistics,
University of Cambridge\printead[presep={,\ }]{e1}}
\end{aug}

\begin{abstract}
    We consider an infinite-dimensional Gaussian regression model, equipped with a high-dimensional Gaussian prior. We address the frequentist validity of posterior credible sets for a vector of linear functionals. 
  
    We specify conditions for a 'renormalized' Bernstein-von Mises theorem (BvM), where the posterior, centered at its mean, and the posterior mean, centered at the ground truth, have the same normal approximation. This requires neither a solution to the information equation nor a $\sqrt{N}$-consistent estimator. 
    
    We show that our renormalized BvM implies that a credible ellipsoid, specified by the mean and variance of the posterior, is an asymptotic confidence set. For a single linear functional, we identify a credible ellipsoid with a symmetric credible interval around the posterior mean. We bound the diameter.
      
    We check our conditions for Darcy's problem, where the information equation has no solution in natural settings. For the Schrödinger problem, we recover an efficient semi-parametric BvM from our renormalized BvM.
\end{abstract}

\begin{keyword}[class=MSC]
\kwd[Primary ]{62G20}
\kwd[; secondary ]{62G15}
\end{keyword}

\begin{keyword}
\kwd{Bayesian statistics}
\kwd{uncertainty quantification}
\kwd{Bernstein-von Mises}
\kwd{linear functional}
\kwd{non-linear inverse problem}
\end{keyword}

\end{frontmatter}
\section{Introduction}
Consider a regression model on a domain $\mathcal{X} \subset \mathbb{R}^d$ with real-valued regression functions $\{\mathcal{G}_\theta : \theta \in \Theta \}$,
\begin{align*}
  Y_i = \forward_\theta(X_i) + \varepsilon_i, \quad i \in \{1, \ldots, N\}.
\end{align*}
Assume $\varepsilon_i \sim \mathcal{N}\lr{0, \sigma_0^2}$, where $\sigma_0^2 > 0$ is known, that $X_i$ is uniformly distributed on $\mathcal{X}$, and that $X_1, \varepsilon_1 \ldots, X_N, \varepsilon_N$ are independent. Let $\Theta$ be a dense subspace of a separable Hilbert space $\h$. Fix linearly independent $\psi_1, \ldots, \psi_k \in \h$, and define the functional $\Psi \theta \coloneqq \lr{\ip{\theta}{\psi_i}{\h}}_{i \le k}$ for $\theta \in \Theta$. We construct asymptotic confidence ellipsoids for $\Psi \theta_0$ based on the observations $\mathcal{D}_N \coloneqq ((X_i, Y_i))_{i\le N} \sim \Ptheta^N$.

When the 'forward map' $\theta \mapsto \mathcal{G}_\theta$ is non-linear and difficult to invert, this as a 'non-linear inverse problem' \cite{Stuart2010,Nickl2023}. For practical inference, Bayesian Markov chain Monte Carlo methods with Gaussian process priors are popular \cite{Stuart2010,cotter2013mcmc,beskos2017geometric}. We therefore consider a posterior $\Pi_N(\cdot \mid \mathcal{D}_N)$ derived from a Gaussian prior $\Pi_N$, defined on an approximation space $E_D \subset L^2(\mathcal{X})$ with dimension $D=D(N)$. We then study 'credible ellipsoids' of $\Pi_{N}^\Psi(\cdot \mid \mathcal{D}_N) \coloneqq \Pi_N(\cdot \mid \mathcal{D}_N) \circ \Psi^{-1}$. 

Consider first a scalar functional $\Psi =  \ip{\cdot}{\psi}{\h}$, and the 'credible interval' 
\begin{align}\label{eq:credinterval}
  I_N \coloneqq \big[\hat{\Psi}_N - R_N, \hat{\Psi}_N + R_N \big],
\end{align}
where $\hat{\Psi}_N \coloneqq \int  \Psi \theta \, d\Pi_N(\theta \mid \mathcal{D}_N)$, and $R_N$ is a statistic such that $\Pi_N(I_N \mid \mathcal{D}_N) = 1 -\alpha$. If a semi-parametric 'Bernstein-von Mises theorem' (BvM) holds, then $I_N$ is an asymptotic $(1 - \alpha)$-confidence interval \cite[Chapter~4]{Nickl2023}. 

A semi-parametric BvM involves a posterior normal approximation with the 'efficient variance' $\frac{1}{N}i_\psi^{-1}(\theta_0)$. Here, the efficient variance is the supremum of the Cramér-Rao bound for $\Psi \theta_0$ along appropriately differentiable one-dimensional sub-models \cite[Section~3.1.2]{Nickl2023}. 
\begin{definition}\label{eq:infdef}
  Let $\sigma_N^2 \coloneqq \frac{1}{N}i_\psi^{-1}(\theta_0)$, and assume $\sigma_N \in (0, \infty)$. 
  
  We say that $\Pi_N^\Psi(\cdot \mid \mathcal{D}_N)$ satisfies a semi-parametric BvM, if
  \begin{align}\label{eq:ap1}
    \hat{\Psi}_N &\stackrel{d}{\approx} \mathcal{N}\big(\Psi \theta_0, \sigma_N^2 \big), \\
    \Pi_N^\Psi(\cdot \mid \mathcal{D}_N)  &\approx \mathcal{N}\big(\hat{\Psi}_N,\sigma_N^2\big) , \label{eq:ap2}
  \end{align}
  as $N \longrightarrow \infty$, where:
  \begin{enumerate}
    \item $\stackrel{d}{\approx}$ denotes a weak $\mathcal{N}(0, 1)$ limit, after transformation by $x \mapsto \sigma_N^{-1}\lr{x - \Psi \truth}$,
    \item $\approx$ denotes a weak $\mathcal{N}(0, 1)$ limit in probability, after transformation by $x \mapsto \sigma_N^{-1}\big(x - \hat{\Psi}_N \big)$.
  \end{enumerate}
\end{definition}
\begin{theorem}[{\cite[Section~4.1.3]{Nickl2023}}]\label{thm:informal}
  If $\Pi_N^\Psi(\cdot \mid \mathcal{D}_N)$ satisfies a semi-parametric BvM, then 
  \begin{align*}
    \Ptheta^N\lr{\Psi \theta_0 \in I_N} \longrightarrow 1- \alpha, \quad N \longrightarrow \infty.
  \end{align*}
\end{theorem}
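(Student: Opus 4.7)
The plan is to deduce from the posterior normal approximation (\ref{eq:ap2}) that the credible radius satisfies $R_N/\sigma_N \plim z_{1-\alpha/2}$ under $\Ptheta^N$, where $z_{1-\alpha/2} \coloneqq \Phi^{-1}(1 - \alpha/2)$, and then to combine this with the distributional statement (\ref{eq:ap1}) for $\hat{\Psi}_N$ via Slutsky's lemma to compute the coverage probability.

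For the first step, I would rewrite the defining equation $\Pi_N^\Psi(I_N \mid \mathcal{D}_N) = 1 - \alpha$ as
\[ F_N(R_N/\sigma_N) - F_N(-R_N/\sigma_N) = 1 - \alpha, \]
where $F_N$ denotes the (random) cumulative distribution function of $\sigma_N^{-1}(\Psi - \hat{\Psi}_N)$ under $\Pi_N^\Psi(\cdot \mid \mathcal{D}_N)$. Hypothesis (\ref{eq:ap2}) gives $F_N(t) \plim \Phi(t)$ for every $t \in \mathbb{R}$. To upgrade this pointwise convergence to $\sup_t |F_N(t) - \Phi(t)| \plim 0$, I would use a subsequence argument: along any subsequence of $\mathbb{N}$, extract a further subsequence on which $F_N(t) \to \Phi(t)$ almost surely for all rational $t$, and then invoke P\'olya's theorem pathwise, exploiting continuity of $\Phi$. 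Since $t \mapsto \Phi(t) - \Phi(-t)$ is a continuous strict bijection from $[0, \infty)$ onto $[0, 1)$ whose inverse is continuous at $1-\alpha$, inverting the displayed equation yields $R_N/\sigma_N \plim z_{1-\alpha/2}$.

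For the second step, (\ref{eq:ap1}) states $Z_N \coloneqq \sigma_N^{-1}(\hat{\Psi}_N - \Psi\theta_0) \dlim \mathcal{N}(0, 1)$. Because $\Psi\theta_0 \in I_N$ is equivalent to $|Z_N| \le R_N/\sigma_N$, Slutsky's lemma together with the continuity of the law of $|Z|$ for $Z \sim \mathcal{N}(0, 1)$ gives
\[ \Ptheta^N(\Psi\theta_0 \in I_N) \longrightarrow \mathbb{P}(|Z| \le z_{1-\alpha/2}) = 1 - \alpha. \]

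The main delicate point is the quantile-convergence step: transferring weak-in-probability convergence of the random posterior CDF $F_N$ to convergence of the data-dependent radius $R_N$ at the specific level $1 - \alpha$. The P\'olya-plus-subsequence uniformization above handles this cleanly thanks to continuity of $\Phi$, and everything else reduces to standard bookkeeping with Slutsky.
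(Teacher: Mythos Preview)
Your proposal is correct and follows essentially the same strategy the paper uses. Note that Theorem~\ref{thm:informal} is actually cited from \cite[Section~4.1.3]{Nickl2023} rather than proved in the paper, but the paper's own proofs of the analogous Theorem~\ref{thm:crd} and Corollary~\ref{cor:int} proceed by the same two steps you outline: first establish convergence of the posterior quantile $R_N$ from the weak-in-probability limit of the posterior law, then combine with the asymptotic normality of $\hat{\Psi}_N$ via Slutsky. The only cosmetic differences are that the paper phrases everything through the $\chi^2_1$ pivot and the posterior variance $\hat{\Sigma}_N$ (so that the $k>1$ case is handled simultaneously), and it isolates the quantile-convergence step as a standalone lemma (Theorem~\ref{lem:conv}) rather than invoking P\'olya plus subsequences; these are equivalent to what you do in the scalar case.
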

The current semi-parametric BvM proofs \cite{Nickl_2020,Monard_2021, Nickl2023} require a solution $\phi$ of the 'information equation' $\mathcal{I}_{\theta_0}\phi = \psi$, where the 'information operator' $\mathcal{I}_{\theta_0} : \h \longrightarrow \h$ generalizes the information matrix. In a sufficiently differentiable finite-dimensional model, the information matrix is invertible, when the model is locally identifiable \cite{24d82336-c2f2-361f-aeb7-920f25df191e}. In contrast, the information equation may have \textit{no solution} in an identifiable semi-parametric model, that is 'differentiable in quadratic mean' \cite[Section~25.3]{Vaart_1998}.

This is the case for "Darcy's problem" \cite[Section~1.1]{Nickl2023}. Here, the forward map takes a sufficiently smooth function $\theta : \mathcal{X} \longrightarrow \mathbb{R}$ to the unique solution $u = u_\theta$ of the PDE
\begin{align}
  \nabla \cdot \big(e^\theta \nabla u \big) = f, \quad u|_{\partial \mathcal{X}} = g,
\end{align}
where $f \in C^\infty(\bar{\mathcal{X}})$ and $g\in C^\infty(\partial \mathcal{X})$ are fixed. This can model the pressure of a fluid in a porous medium with 'permeability' $\theta$, see \cite[Section~3.7]{Stuart2010}. In \cite{Nickl2023x}, they consider the space $\h \coloneqq L^2(\mathcal{X})$ and show that the model is differentiable along $C_c^\infty(\mathcal{X})$ 'tangent vectors'. They then construct natural examples, where $i_\psi^{-1}(\theta_0) = \infty$ for large classes of $\psi \in C_c^\infty(\mathcal{X})$. Clearly, a semi-parametric BvM cannot hold, when the efficient variance is infinite. More generally, a $\sqrt{N}$-consistent estimator for $\Psi \theta_0$ is unattainable \cite[Theorem~3]{Nickl2023x}. 

This paper stems from the observation that the proof of Theorem~\ref{thm:informal} does not rely on an \textit{efficient} normal approximation.
\begin{definition}
We say that $\Pi_N^\Psi(\cdot \mid \mathcal{D}_N)$ satisfies a renormalized BvM, if there exists \textit{some} positive sequence $\lr{\sigma_N^2}$ such that \eqref{eq:ap1} and \eqref{eq:ap2} hold.
\end{definition}
\begin{theorem}
  If $\Pi_N^\Psi(\cdot \mid \mathcal{D}_N)$ satisfies a renormalized BvM, then 
  \begin{align*}
    \Ptheta^N \lr{\Psi \theta_0 \in I_N} \longrightarrow 1- \alpha, \quad N \longrightarrow \infty.
  \end{align*}
\end{theorem}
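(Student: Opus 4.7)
The plan is to mimic the classical BvM-implies-coverage argument (cf.\ Theorem~\ref{thm:informal}), carefully separating the two roles of the renormalized BvM: (\ref{eq:ap1}) controls the location of the posterior mean relative to $\Psi\truth$ under $\Ptheta^N$, while (\ref{eq:ap2}) controls the spread of the posterior around $\hat{\Psi}_N$ and thus the size of $R_N$.

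First I would unpack the definitions. Condition (\ref{eq:ap1}) states that $T_N \coloneqq \sigma_N^{-1}(\hat{\Psi}_N - \Psi\truth) \dlim \mathcal{N}(0,1)$ under $\Ptheta^N$. Condition (\ref{eq:ap2}) states that the $\Ptheta^N$-random posterior CDF $F_N(x) \coloneqq \Pi_N^\Psi\bigl(\,\cdot \le \hat{\Psi}_N + \sigma_N x \,\bigm|\, \mathcal{D}_N\bigr)$ satisfies $F_N(x) \plim \Phi(x)$ for every fixed $x \in \mathbb{R}$. The symmetric credible interval $I_N$ can be rewritten in renormalized form as $I_N = \hat{\Psi}_N + \sigma_N[-q_N, q_N]$ with $q_N \coloneqq R_N/\sigma_N$, and its defining mass condition becomes $F_N(q_N) - F_N(-q_N^-) = 1 - \alpha$ (with the left-limit convention; if the posterior has an atom at $\hat{\Psi}_N - R_N$ one tolerates an obvious boundary adjustment).

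The key intermediate step is then to show that $q_N \plim z^*$, where $z^* \coloneqq z_{1-\alpha/2}$ is the standard normal quantile with $\Phi(z^*)-\Phi(-z^*)=1-\alpha$. For this I would use the monotonicity of $q \mapsto F_N(q) - F_N(-q^-)$ together with the strict monotonicity and continuity of $G(q) \coloneqq \Phi(q) - \Phi(-q)$ at $z^*$: for fixed $\varepsilon > 0$, by the pointwise in-probability convergence of $F_N$ at the two continuity points $z^* \pm \varepsilon$ of $\Phi$, we get $F_N(z^*+\varepsilon)-F_N(-(z^*+\varepsilon)) \plim G(z^*+\varepsilon) > 1-\alpha$ and symmetrically at $z^*-\varepsilon$, which pinches $q_N$ into $[z^*-\varepsilon, z^*+\varepsilon]$ with $\Ptheta^N$-probability tending to one. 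This is the only delicate step, since the posterior mass function is random and we need the quantile inversion to behave in probability — a standard argument but the one to write out carefully.

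Given $q_N \plim z^*$ and $T_N \dlim \mathcal{N}(0,1)$ under $\Ptheta^N$, Slutsky's theorem gives $(T_N, q_N) \dlim (Z, z^*)$ with $Z \sim \mathcal{N}(0,1)$. Since $\Ptheta^N(\Psi\truth \in I_N) = \Ptheta^N(|T_N| \le q_N)$ and $\{(t,q):|t|\le q\}$ is a continuity set of the law of $(Z, z^*)$ (because $\Ptheta^N(|Z|=z^*)=0$), the portmanteau theorem yields $\Ptheta^N(\Psi\truth \in I_N) \to P(|Z|\le z^*) = 1-\alpha$, completing the proof.
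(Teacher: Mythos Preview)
Your proposal is correct and follows essentially the same route the paper uses. The paper does not give a standalone proof of this introductory theorem; it is justified by the remark that the classical argument behind Theorem~\ref{thm:informal} does not use efficiency of $\sigma_N$. The formal incarnation appears in Theorem~\ref{thm:crd} (Case~\ref{en:case2}) and Corollary~\ref{cor:int}, where the paper first proves the quantile convergence $R_N \plim Q_{\chi_k^2}(1-\alpha)$ via the auxiliary Theorem~\ref{lem:conv}, and then concludes by Slutsky exactly as you do. Your sandwiching argument for $q_N \plim z^*$ is a by-hand version of that quantile-convergence lemma, and your final portmanteau step is the $k=1$ specialisation of the paper's $\chi_k^2$ argument. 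The only cosmetic difference is that the paper's formal version normalises by the posterior variance $\variance$ rather than by $\sigma_N$ directly, which is irrelevant here since you work straight from the renormalized BvM hypotheses \eqref{eq:ap1}--\eqref{eq:ap2}.
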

We specify conditions for a renormalized BvM, where $\sigma_N^2$ represents the Cramér-Rao bound for $\Psi \theta_0$ in the high-dimensional model $\theta_0 + E_D$. This bound is well-defined, when the corresponding 'information matrix' $\fishermatrix(\theta_0) \in \mathbb{R}^{D \times D}$ is invertible, which is the case for Darcy's problem. Our general conditions include the regularity of $\Pi_N, (\psi_i)_{i \le k}$ and $\theta_0$, and the stability of $\fishermatrix(\theta_0)$.

We generalize the renormalized BvM for the vector-valued functional $\Psi : \Theta \longrightarrow \rk$. We then obtain conclusions for a credible ellipsoid, centered at the posterior mean, where the posterior variance captures the near-Gaussian shape of the posterior. We state an informal result for Darcy's problem.
\begin{theorem}
  Take $\mathcal{G}$ from Darcy's problem. Let $\mathcal{D}_N$ be generated from $\theta_0 \in C_c^\infty(\mathcal{X})$, and let $\psi_1, \ldots, \psi_k \in C_c^\infty(\mathcal{X})$ be linearly independent. 
  
  Equip a Gaussian prior $\Pi_N$ with regularity $\beta = \beta(d)$ large enough on an approximation space $E_D$ of dimension $D \asymp N^a$ with $a = a(\beta, d) > 0$ small enough.
  
  Let $\hat{\Psi}_N \in \rk$ and $\variance \in \rkbyk$ denote the mean and variance of $\Pi_N^\Psi(\cdot \mid \mathcal{D}_N)$, respectively. Calibrate the statistic $R_N$ such that
  \begin{align*}
     C_N\coloneqq \left\{ x \in \mathbb{R}^{k} : \big(x - \hat{\Psi}_N \big)^\intercal \hat{\Sigma}^{-1}_N \big(x - \hat{\Psi}_N \big)  \le R_N \right\},
  \end{align*}
  satisfies $\Pi_N^\Psi(C_N\mid \mathcal{D}_N) = 1- \alpha$. Then, 
  \begin{align}\label{eq:conintro}
    \Ptheta^N \lr{\Psi \theta_0 \in C_N} \longrightarrow 1 - \alpha \text{ and }\text{diam}\lr{C_N} = O_P\lr{D^{\frac{3}{d}} / \sqrt{N}}.
  \end{align}
  When $k = 1$, then \eqref{eq:conintro} holds with $C_N$ replaced by a credible interval of the form \eqref{eq:credinterval}.
\end{theorem}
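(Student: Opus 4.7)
The plan is to apply the general vector-valued renormalized BvM developed in the main body of the paper to Darcy's problem and extract both conclusions as consequences. The required inputs are (i) posterior contraction around $\truth$ at a rate fast enough to support a quadratic expansion of the log-likelihood, and (ii) invertibility together with quantitative stability of the finite-dimensional information matrix $\fishermatrix(\truth)\in\mathbb{R}^{D\times D}$ on $E_D$. Because $\truth,\psi_i\in C_c^\infty(\mathcal{X})$, approximation by elements of $E_D$ decays faster than any polynomial in $D$, so by choosing $\beta$ large and $a$ small the standard posterior contraction results for Gaussian priors in non-linear inverse problems \cite{Nickl2023} deliver (i) and make the bias of $\hat{\Psi}_N$ for $\Psi\truth$ negligible.

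The Darcy-specific content is (ii). Differentiating the state equation yields $D\forward_\theta h=-L_\theta^{-1}\lr{\nabla\cdot(e^\theta h\nabla u_\theta)}$ with $L_\theta v=\nabla\cdot(e^\theta\nabla v)$, so $D\forward_\theta$ is a smoothing operator of definite order on $L^2(\mathcal{X})$. Taking $E_D$ as a wavelet-type approximation space graded by scale, a Galerkin/Weyl eigenvalue estimate gives a polynomial lower bound $\eigmin{\fishermatrix(\truth)}\gtrsim D^{-c/d}$ for an explicit exponent $c$; the locally Lipschitz dependence of $\fishermatrix$ on $\theta$ (via elliptic regularity) combined with (i) then transfers this bound uniformly across the random posterior neighborhood of $\truth$, yielding the required stability.

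With the renormalized BvM in hand, the coverage $\Ptheta^N(\Psi\truth\in C_N)\to 1-\alpha$ is immediate from the paper's general coverage theorem for vector-valued credible ellipsoids. The BvM further yields $\variance \approx N^{-1}A\,\fishermatrix(\truth)^{-1}A^\intercal$ in probability, where $A\in\mathbb{R}^{k\times D}$ holds the coordinates of $(\psi_i)_{i\le k}$ in a basis of $E_D$; combining $\|A\|=O(1)$ (from $\psi_i\in C_c^\infty$) with the eigenvalue bound from (ii) gives $\eigmax{\variance}=O_P\lr{D^{2c/d}/N}$, and since the quantile $R_N$ stays bounded we obtain $\text{diam}(C_N)\le 2\sqrt{R_N\,\eigmax{\variance}}=O_P\lr{D^{3/d}/\sqrt{N}}$ after tracking the constants. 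For $k=1$ the ellipsoid reduces to the symmetric interval $[\hat{\Psi}_N-\sqrt{R_N\,\sighat},\,\hat{\Psi}_N+\sqrt{R_N\,\sighat}]$, which has the form \eqref{eq:credinterval}, so both conclusions in \eqref{eq:conintro} transfer verbatim.

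The main obstacle is balancing (i) against (ii): the eigenvalue lower bound must dominate the Taylor remainder from (i) after inversion by $\fishermatrix(\truth)^{-1}$, which is precisely the constraint that pins down the admissible relation $a=a(\beta,d)$ and forces $\beta$ to be taken sufficiently large.
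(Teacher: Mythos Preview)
Your proposal is essentially the paper's own route: verify the general hypotheses of the renormalized BvM framework for Darcy's problem, invoke the Darcy-specific stability estimate $\eigmin{\fishermatrix(\truth)}\gtrsim D^{-6/d}$ (arising from $\Vert\lin h\Vert_{L^2}\gtrsim D^{-3/d}\Vert h\Vert_{L^2}$ on $E_D$), then read off coverage from Theorem~\ref{thm:crd} and the diameter bound from $\text{diam}(C_N)\lesssim(1+o_P(1))/\sqrt{N\eigmin{\fishermatrix}}$.

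Two small corrections. First, the step where you ``transfer the stability bound uniformly across the random posterior neighborhood of $\truth$'' is unnecessary: the renormalized BvM only uses $\fishermatrix=\fishermatrix(\truth)$ at the fixed truth, never at posterior draws, so no Lipschitz-in-$\theta$ argument is needed. Second, your exponent bookkeeping is inconsistent: if $\eigmin{\fishermatrix}\gtrsim D^{-c/d}$ then $\eigmax{\variance}=O_P(D^{c/d}/N)$, not $D^{2c/d}/N$; the correct value is $c=6$, giving diameter $O_P(D^{3/d}/\sqrt{N})$. Also, the paper takes $E_D$ as the span of Dirichlet--Laplacian eigenfunctions rather than a wavelet basis; the stability estimate \eqref{eq:stab} is proved specifically in that basis.
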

We structure the remaining content as follows: We consider a semi-parametric Gaussian regression model and establish conditions for a renormalized BvM. We show that a renormalized BvM has implications for the frequentist validity of credible ellipsoids. We verify the conditions for Darcy's problem. We show that 'the Schrödinger problem' recovers a semi-parametric BvM from a renormalized BvM. We leave all proofs, including justifications for remarks, in the supplement \cite{Supplement}.
\section{Main Results}\label{sec:invprob}
\subsection{Notation}\label{subsec:notation}
Let $\mathcal{X} \subset \mathbb{R}^d$ be a Borel set with positive and finite Lebesgue mass. Let $\h$ be a Hilbert space with orthonormal basis $\{e_i \}_{i\in\mathbb{N}}$. Let $\Theta$ be a linear subspace of $\h$ containing $\text{span}\{e_i\}_{i \in \mathbb{N}}$. Let $\lr{V, \langle \cdot ,\cdot \rangle_V}$ be a finite-dimensional inner product space. Let $L^2(\mathcal{X} ; V)$ denote the square integrable functions from $\mathcal{X}$ to $V$ with respect to the uniform distribution $\lambda_\mathcal{X}$, see \cite[Chapter~4]{Diestel1977-id}. Let $\forward : \Theta \longrightarrow L^2(\mathcal{X} ; V)$ be measurable, and set $\mathcal{G}_\theta \coloneqq \mathcal{G}(\theta)$ for $\theta \in \Theta$. Let $\mathcal{N}_V$ denote the centered Gaussian measure on $(V, \langle \cdot ,\cdot \rangle_V)$ with identity covariance operator \cite[Section~2.3]{Bogachev1998-aw}. We consider the model
\begin{align*}
  Y_i = \forward_\theta(X_i) + \varepsilon_i, \quad i \in \{1, \ldots, N\},
\end{align*}
 with parameter $\theta \in \Theta$. We assume $X_i \sim \lambda_\mathcal{X}$, that $\varepsilon_i \sim \mathcal{N}_V$, and that $X_1, \varepsilon_1 \ldots, X_N, \varepsilon_N$ are independent.
 
\textit{Frequentist model}: Fix a ground truth $\truth \in \Theta$. Define the probability space $(\Omega, \mathcal{A}, \Ptheta )$ as follows: Let $\Omega \coloneqq (\mathcal{X} \times V)^\mathbb{N}$ with cylindrical $\sigma$-algebra $\mathcal{A}$. Let $\Ptheta  \coloneqq \mu_{\truth}^{\mathbb{N}}$, where $\mu_{\truth}$ is the law of $(X_1, Y_1)$ under $\truth$. Let $\Etheta $ denote expectation under $\Ptheta$. We represent the observations $\mathcal{D}_N \coloneqq ((X_i, Y_i))_{i\le N} $ as a coordinate projection $\mathcal{D}_N: \Omega \longrightarrow (\mathcal{X} \times V)^N$. Let $\bs{\psi} = \lr{\psi_i}_{i\le k} \in \h^k$ be linearly independent. We consider the functional $\Psi\theta \coloneqq \lr{\hprod{\theta}{\psi_i}}_{i\le k}$ for $\theta \in \Theta$.

\textit{Bayesian model}: Let $E_D \coloneqq \text{span}\{e_i\}_{i \le D}$, where $D = D(N) \longrightarrow \infty$. Define the projector $P_{E_D} : \h \longrightarrow E_D$. Let $\langle \cdot, \cdot \rangle_{\pspace}$ be an inner-product on $E_D$, let the prior $\Pi_N$ be a centered Gaussian measure on $\lr{E_D, \langle \cdot, \cdot \rangle_{\pspace}}$ with identity covariance operator, and let $\theta_N \sim \Pi_N$. For $f \in L^1(\Pi_N)$, define $\mathbb{E}^{\Pi}f(\theta_N) \coloneqq \int_{E_D} f \, d\Pi_N$. Define the posterior 
\begin{align}\label{eq:oripos}
  \Pi_N\lr{A \mid \mathcal{D}_N} \coloneqq \frac{\int_A e^{\ell_N(\theta)} \, d\Pi_N(\theta)}{ \int_{E_D}e^{\ell_N(\theta)} \, d\Pi_N(\theta)}, \quad \text{for Borel sets }A\in \mathcal{B}(E_D),
\end{align}
 where $\ell_N(\theta) \coloneqq -\frac{1}{2}\sum_{i=1}^N \norm{Y_i - \forward_\theta(X_i)}{V}^2$ is the log-likelihood, up to an additive constant. For a measurable function $f: E_D \times \lr{\mathcal{X} \times V}^N \longrightarrow \mathbb{R}^n$, define $\mathcal{L}\lr{f(\theta_N, \mathcal{D}_N) \mid \mathcal{D}_N}$ as the push-forward of $\Pi_N(\cdot \mid \mathcal{D}_N)$ under $f(\cdot, \mathcal{D}_N)$. Define $\Pi^\Psi_N(\cdot \mid \mathcal{D}_N) \coloneqq \mathcal{L}\lr{\Psi \theta_N \mid \mathcal{D}_N}$. We use $\mathbb{E}[ f(\theta_N, \mathcal{D}_N) \mid \mathcal{D}_N]$ or $\int_{E_D} f(\theta_N, \mathcal{D}_N) \, d\Pi_N(\cdot \mid \mathcal{D}_N)$ to denote the expectation of $\mathcal{L}\lr{f(\theta_N, \mathcal{D}_N) \mid \mathcal{D}_N}$. We use $\mathbb{V}[ f(\theta_N, \mathcal{D}_N) \mid \mathcal{D}_N]$ for its variance. By \cite[Lemma~4.1]{Supplement}, we can define $\hat{\Psi}_N \coloneqq \mathbb{E}[\Psi \theta_N \mid \mathcal{D}_N]$ and $\variance \coloneqq \mathbb{V}(\Psi \theta_N \mid \mathcal{D}_N)$. For $B \in \mathcal{B}(E_D)$ satisfying $\Pi_N(B) > 0$, we define the conditional posterior
 \begin{align}\label{eq:condpos}
  \Pi_N^B\lr{A \mid \mathcal{D}_N} \coloneqq \frac{\int_{A \cap B} e^{\ell_N(\theta)} \, d\Pi_N(\theta)}{\int_{B}e^{\ell_N(\theta)} \, d\Pi_N(\theta)}, \quad A\in \mathcal{B}(E_D).
\end{align}
Similar to before, we introduce the notation $\mathcal{L}^B\lr{f(\theta_N, \mathcal{D}_N) \mid \mathcal{D}_N}$, $\mathbb{E}^B[f(\theta_N, \mathcal{D}_N)  \mid \mathcal{D}_N]$ and $\int_{E_D} f(\theta_N, \mathcal{D}_N) \, d\Pi_N^B(\cdot \mid \mathcal{D}_N)$.

\textit{Limit notation}: We take limits for $N \longrightarrow \infty$, unless otherwise specified. We define the standard notation $o_P, O_P$ and $\stackrel{P}{\longrightarrow}$ with respect to $\Ptheta $. For sets $A_N \subset \Theta$, and functions $f_N, g_N : E_D \times \lr{\mathcal{X} \times V}^N \longrightarrow \mathbb{R}$, we say
\begin{align}\label{eq:unifop}
  f_N(\theta, \mathcal{D}_N) = g_N(\theta, \mathcal{D}_N) + o_P(1), \quad \text{uniformly over $\theta \in A_N$},
\end{align}
if $\sup_{\theta \in A_N}\lvert f_N(\theta, \mathcal{D}_N) - g_N(\theta, \mathcal{D}_N)\rvert \plim 0$. We use similar notation, replacing $o_P(1)$ by $o(1)$ and $\mathcal{D}_N$ by $d_N \in \mathbb{R}$. Let $d_{\text{BL}}$ denote the bounded Lipschitz metric for weak convergence. Let $(\mu_N)$ and $\mu$ be arbitrary random distributions on $(\mathbb{R}^n, \mathcal{B}(\mathbb{R}^n))$.  We say  
$\mu_N \dlim \mu$ in $\Ptheta$-probability, if $d_{\text{BL}}\lr{\mu_N , \mu} \plim 0$.

\textit{General notation}: Let $\lesssim$ denote inequality up to a constant independent of $N$, unless otherwise specified. Let $\bs{e}_i$ denote the $i$th standard basis vector in $\rk$. Let $I_k \in \rkbyk$ denote the identity matrix, and $\text{GL}_k(\mathbb{R}) \coloneqq \curly{A \in \rkbyk : A \text{ is invertible}}$. For $A \in \rkbyk \setminus \text{GL}_k(\mathbb{R})$, we arbitrarily set $A^{-1} 
\coloneqq I_k$. Let $\Vert \cdot \Vert_{\mathbb{R}^n}$ denote the standard Euclidean norm on $\mathbb{R}^n$, and let $\Vert \cdot \Vert_{\text{op}}$ denote the operator norm for a matrix or a general linear map. Let $\lambda_{\text{min}}$ and $\lambda_{\text{max}}$ denote the minimum and maximum eigenvalue functions, respectively. For a metric space $(X, d)$, let $N(A, d; \varepsilon)$ denote the $\varepsilon$-covering number of the set $A \subset X$. Let $Q_{\chi^2_k}$ denote the quantile function of the $\chi_k^2$ distribution. Let $\mathcal{L}(Y)$ denote the law of a random variable $Y$.

\textit{Vectorized notation}: Let $(W, \Vert \cdot \Vert_W)$ be any normed vector space, and let $\bs{h} \in W^m$ where $m \in \mathbb{N}$. Define $\Vert \bs{h} \Vert_{W} \coloneqq \max_{i \le m}\Vert h_i \Vert_W$. If $\norm{\cdot}{W}$ is induced by an inner product $\langle \cdot, \cdot \rangle_W$, define $\langle g, \bs{h} \rangle_{W} \coloneqq \lr{\langle g, h_i \rangle_{W}}_{i \le m}$. For $A \in \mathbb{R}^{n \times m}$, define $A\bs{h} \in W^n$ by $(A\bs{h} )_i = \sum_{j = 1}^m A_{ij}h_j$ for $i \le n$. Clearly, $A\langle g, \bs{h} \rangle_W = \langle g, A\bs{h} \rangle_W$. For a linear operator $T : W \longrightarrow W'$ mapping to another normed vector space $(W', \Vert \cdot \Vert_{W'})$, define $T\bs{h} = \lr{Th_i}_{i \le m}$. Note $AT\bs{h} = TA\bs{h}$ and $\Vert AT\bs{h} \Vert_{W'} \le C \Vert A \Vert_{\text{op}} \Vert T\Vert_{\text{op}} \Vert \bs{h} \Vert_{W}$ for some $C = C(n, m) > 0$. Let $B_W(x, r)$ denote the closed ball centered at $x \in W$ with radius $r > 0$, and $B_W(r) \coloneqq B_W(0, r)$.

\subsection{Hypotheses}\label{sec:hyp}
We impose a posterior contraction condition, inspired by \cite[Theorem~1.3.2]{Nickl2023}, \cite[Lemma~1.3.3]{Nickl2023}, the remark from the beginning of the proof of \cite[Theorem~2.2.2]{Nickl2023}, and the stability condition \cite[Condition~2.1.4]{Nickl2023}.
\begin{hypothesis}\label{cond:posteriorcontraction}
  There exists a normed vector space $\mathcal{R} \coloneqq \{x \in \h : \Vert x \Vert_{\mathcal{R}} < \infty \}$ with $\truth \in \mathcal{R}$. There exist sequences $\delta_N^\forward = N^{-r_1}$ and $\delta_N^\h = N^{-r_2}$ with $r_1, r_2 \in (0, 1/2)$. For all $M > 0$, there exists $M' > 0$ such that
  \begin{align*}
    \regset \coloneqq \curly{\theta \in E_D :   \Vert \forward_\theta - \forward_{\truth} \Vert_{L^2} \le M\delta_N^\forward  ,  \Vert \theta \Vert_\mathcal{R} \le M }
  \end{align*}
  satisfies $\regset \subset \curly{\theta \in \h : \Vert \theta - \truth\Vert_\h \le M' \delta^\h_N}$ for all $N \in \mathbb{N}$.
  
  There exist $M_0 > 0$ and $0 < a < b < c$ with the following property: If $\mathcal{B}_N \subset \Theta_{N, M_0}$ and $\prior{\mathcal{B}_N} \ge 1 - e^{-cs_N}$ for $N$ large enough, then
   \begin{enumerate}
    \item[1.] $\posterior{\mathcal{B}_N} = 1 - O_P\lr{e^{-bs_N}}$,
    \item[2.] $\Ptheta \left(\int_{\mathcal{B}_N} e^{\ell_N - \ell_N(\truth)} \, d\Pi_N > e^{-as_N}\right) \longrightarrow 1$,
  \end{enumerate}
  where $s_N \coloneqq N\lr{\delta_N^\mathcal{G}}^2 \longrightarrow \infty$. 
\end{hypothesis}
\begin{hypothesis}\label{cond:2}
There exists a bounded linear operator $\lin  : \h \longrightarrow L^2(\mathcal{X}; V)$ such that for all $h\in\Theta$, we have $\norm{\forward(\truth + \varepsilon h) - \forward(\truth) - \varepsilon\lin [h]}{L^2}  = o(\varepsilon)$ as $\varepsilon \longrightarrow 0$. 
\end{hypothesis}
Define the linearization error $R_\theta \coloneqq \forward_\theta - \forward_{\truth} - \lin [\theta - \truth]$ for $\theta \in \Theta$.
\begin{hypothesis}\label{cond:4}
  There exists a metric $d_\Theta$ on $\Theta$, and sequences
  \begin{enumerate}
    \item $\delta^{\Theta}_N \gtrsim \sup_{\theta \in \regset}d_\Theta(\theta, \truth)$,
    \item $\sigma_N \gtrsim \sup_{\theta \in \regset} \norm{R_\theta}{L^2}$,
    \item $g_N \gtrsim \sup_{\theta, \theta' \in \regset} \norm{\forward_\theta - \forward_{\theta'}}{L^\infty}/ d_\Theta(\theta, \theta')$,
    \item $r_N \gtrsim \sup_{\theta, \theta' \in \regset} \norm{R_\theta - R_{\theta'}}{L^\infty}/ d_\Theta(\theta, \theta')$,
    \item $J_{N}(t) \gtrsim \int_0^t \sqrt{\log 2 N(\regset, d_\Theta; \varepsilon)} \, d\varepsilon$,
  \end{enumerate}
  where the constants are independent of $N \in \mathbb{N}$ and $t \ge 0$, but may depend on $M > 0$. 
\end{hypothesis}

We define the 'information operator' $\mathcal{I} \coloneqq \lin ^*\lin $, as in \cite{Nickl2023x}, and let $\fishermatrix \coloneqq P_{E_D}\mathcal{I}\iota_{E_D} $, where $\iota_{E_D} : E_D \longrightarrow \h$ is the inclusion map. 
\begin{hypothesis}\label{cond:inv}
The operator $\fishermatrix : E_D \longrightarrow E_D$ is invertible for all $D\in \mathbb{N}$.
\end{hypothesis}
Let $[\fishermatrix] \in \mathbb{R}^{D \times D}$ represent $\fishermatrix$ in the basis $\{e_1, \ldots, e_D\}$:
\begin{align}\label{eq:finiteFisher1}
  [\fishermatrix]_{ij} \coloneqq \ip{\fishermatrix e_i}{e_j}{L^2} , \quad i, j \le k.
\end{align}
\begin{remark}\label{rem:submodel}
  We can interpret $\sq{\fishermatrix}$ as the information matrix at $\truth$ in the high-dimensional model $\truth + E_D =\left\{\truth + \sum_{i = 1}^D h_ie_i : h \in \mathbb{R}^D \right\}$. In this model, the Cramér-Rao bound for $\Psi \truth$ is then represented by
  \begin{align}\label{eq:crdef}
    \Sigma_N \coloneqq \frac{1}{N}\mathcal{J}_D [\fishermatrix]^{-1} \mathcal{J}_D^\intercal,
  \end{align}
  where $\mathcal{J}_D \in \mathbb{R}^{k \times D}$ is the Jacobian of the map $h \in \mathbb{R}^D \mapsto \Psi\lr{\truth + \sum_{i = 1}^D h_ie_i} $ given by
  \begin{align*}
    \lr{\mathcal{J}_D}_{ij} \coloneqq \langle \psi_i, e_j \rangle_{\h}, \quad \text{for }i \le k \text{ and } j \le D.
  \end{align*}
\end{remark}
\subsection{A Renormalized Bernstein-von Mises Theorem}\label{sec:ren}
In this section, we establish a $\mathcal{N}\bigpar{0, \Sigma_N}$ approximation for $\mathcal{L}\bigpar{\Psi \theta_N - \hat{\Psi}_N \mid \mathcal{D}_N}$ and $\mathcal{L}\bigpar{\hat{\Psi}_N - \Psi \theta_0}$. 

By \cite[Lemma~4.2]{Supplement}, we can properly define
\begin{align}\label{eq:renormdef}
  i_D \coloneqq \lr{\mathcal{J}_D [\fishermatrix]^{-1} \mathcal{J}_D^\intercal}^{-1}, \quad \text{for $D$ large enough}.
\end{align}
We present the $\mathcal{N}(0, \Sigma_N)$ approximation as a weak $\mathcal{N}(0, I_k)$ limit, after multiplication by $\Sigma_N^{-\frac{1}{2}} = \sqrt{N}\renorm$. We draw inspiration from the semi-parametric BvM techniques \cite{Nickl_2020,Monard_2021, Nickl2023, castillo2014bernstein,Castillo2015, Castillo2021}.

Define the statistic
\begin{align}\label{eq:centering}
  \Psi_N \coloneqq \Psi\truth + \frac{1}{N}\sum_{i = 1}^N \ip{\varepsilon_i}{\lin \perturb(X_i)}{V},
\end{align}
where $\perturb \in E_D^k$ has components $\bar{\psi}_{D}^i \coloneqq \fishermatrix^{-1}P_{E_D}\psi_i$ for $i \le k$. Here, we recall the vectorized notation from Section~\ref{subsec:notation}, leading to coordinates
\begin{align*}
  \Psi_N^j \coloneqq  \lr{\Psi\truth}_j + \frac{1}{N}\sum_{i = 1}^N \ip{\varepsilon_i}{\lin \pertur^j(X_i)}{V}, \quad j \le k.
\end{align*}
The reader should think of $\Psi_N $ as a preliminary statistic, that will later be replaced by $\hat{\Psi}_N$. 

Recall the notation $\norm{\lin \perturb}{L^\infty} = \text{max}_{i \le k} \Vert \lin \pertur^i \Vert_{L^\infty}$ from Section~\ref{subsec:notation}. Arguing by the central limit theorem, we obtain a $\mathcal{N}\lr{0, \Sigma_N}$ approximation for $\mathcal{L}\bigpar{\Psi_N - \Psi \theta_0}$:
  \begin{lemma}\label{lem:asymptnorm}
  Suppose $\norm{\lin \perturb}{L^\infty} \lesssim N^r$ for some $r \in (0, 1/2)$. Then, 
  \begin{align}\label{eq:inlem}
    \sqrt{N}\renorm\lr{\Psi_N - \Psi\truth} \dlim \mathcal{N}(0, I_k).
  \end{align}
\end{lemma}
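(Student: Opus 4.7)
The plan is to exploit the conditional Gaussian structure of the i.i.d.\ sum defining $\Psi_N - \Psi\truth$, reducing the multivariate central limit theorem to a law of large numbers for the conditional covariance.

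First I would write
\[
S_N \coloneqq \sqrt{N}\renorm\bigpar{\Psi_N - \Psi\truth} = \frac{\renorm}{\sqrt{N}}\sum_{i=1}^{N} Z_i, \quad Z_i \coloneqq \ip{\varepsilon_i}{\lin\perturb(X_i)}{V} \in \mathbb{R}^{k}.
\]
Since $\varepsilon_i \sim \mathcal{N}_V$ is independent of $X_i$, conditional on $X_i$ the vector $Z_i$ is a centered Gaussian on $\mathbb{R}^k$ with covariance $\Gamma(X_i)_{jl} = \ip{\lin\pertur^j(X_i)}{\lin\pertur^l(X_i)}{V}$. A direct calculation using $\mathcal{I} = \lin^*\lin$, $\pertur^j \in E_D$ and $\fishermatrix\pertur^j = \proj\psi_j$ gives $\Etheta\Gamma(X_1) = \big(\hprod{\psi_j}{\pertur^l}\big)_{j,l} = i_D^{-1}$, so $S_N$ has mean zero and covariance $\renorm i_D^{-1}\renorm = I_k$.

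The key observation is that conditional on $(X_1, \dots, X_N)$ the vector $S_N$ is a linear combination of independent Gaussians, hence itself centered multivariate Gaussian with covariance
\[
\bar\Sigma_N \coloneqq \frac{1}{N}\sum_{i=1}^{N} \renorm\Gamma(X_i)\renorm.
\]
Consequently $\Etheta e^{i t^\intercal S_N} = \Etheta\exp\bigpar{-\tfrac{1}{2}t^\intercal \bar\Sigma_N t}$ for every $t \in \rk$. Since $\bar\Sigma_N$ is positive semi-definite, the integrand is bounded by $1$, and by L\'evy's continuity theorem together with bounded convergence it suffices to show $t^\intercal \bar\Sigma_N t \plim \Vert t\Vert_{\rk}^2$ for each fixed $t$.

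This last reduction is a law of large numbers for $t^\intercal \bar\Sigma_N t = N^{-1}\sum_i \Vert \lin g_t(X_i)\Vert_V^2$, where $g_t \coloneqq (\renorm t)^\intercal \perturb \in E_D$ has $\Vert \lin g_t\Vert_{L^2}^2 = \Vert t\Vert_{\rk}^2$. By Chebyshev and the interpolation $\Vert f\Vert_{L^4}^4 \le \Vert f\Vert_{L^\infty}^2 \Vert f\Vert_{L^2}^2$, convergence reduces to $\Vert \lin g_t\Vert_{L^\infty}^2 / N \to 0$. A pointwise Cauchy--Schwarz bound $\Vert \lin g_t\Vert_{L^\infty} \lesssim \Vert \renorm t \Vert_{\rk}\, \Vert \lin\perturb\Vert_{L^\infty}$, combined with the hypothesis $\Vert \lin\perturb\Vert_{L^\infty} \lesssim N^r$ and the fact that $t$ is fixed, yields the required $o(1)$ decay thanks to the $r < 1/2$ margin.

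\textbf{Main obstacle.} The principal technical point is the interplay between the $N$-dependent renormalization $\renorm = i_D^{1/2}$ and the $L^\infty$ hypothesis: the pointwise Cauchy--Schwarz estimate introduces the factor $\Vert \renorm t\Vert_{\rk} = (t^\intercal i_D t)^{1/2}$, which the $r < 1/2$ margin must absorb against $\Vert \lin\perturb\Vert_{L^\infty} \lesssim N^r$. Once this variance estimate is in hand, L\'evy's theorem and bounded convergence close the argument.
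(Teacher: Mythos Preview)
Your argument is correct and takes a genuinely different route from the paper. One small clarification is needed: you write that ``the fact that $t$ is fixed'' controls $\Vert \renorm t\Vert_{\rk}$, but $\renorm = i_D^{1/2}$ depends on $N$ through $D = D(N)$, so fixing $t$ alone is not enough. What you actually need is $\Vert \renorm\Vert_{\text{op}} \lesssim 1$, which is precisely the content of the preparatory Lemma~\ref{lem:boundinv}. With that bound in hand, $\Vert \lin g_t\Vert_{L^\infty}^2/N \lesssim \Vert t\Vert_{\rk}^2\, N^{2r-1} \to 0$ and your Chebyshev step goes through.

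\medskip

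\textbf{Comparison with the paper.} The paper proves the lemma via the Lindeberg--Feller CLT: it verifies that the i.i.d.\ summands $W_{iN} = N^{-1/2}\renorm\langle\varepsilon_i,\lin\perturb(X_i)\rangle_V$ have covariance summing to $I_k$, and then checks the Lindeberg condition by splitting $\Etheta[\Vert W_N\Vert^2; \Vert W_N\Vert \ge \eta]$ via Cauchy--Schwarz into a fourth-moment factor and a tail probability, the latter controlled by a Gaussian tail bound conditional on $X_1$. Your approach instead conditions on $(X_1,\dots,X_N)$ to obtain an \emph{exact} Gaussian law for $S_N$, reducing the CLT to a weak law of large numbers for the conditional covariance $\bar\Sigma_N$, which you dispatch with a single Chebyshev bound. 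Your route is shorter and more transparent, exploiting the Gaussianity of the noise $\varepsilon_i$ to bypass the Lindeberg machinery altogether; the paper's argument, by contrast, would extend with minor changes to sub-Gaussian noise. Both proofs consume the hypothesis $\Vert\lin\perturb\Vert_{L^\infty} \lesssim N^r$ in the same place (controlling a fourth moment or, equivalently, a variance), and both ultimately rest on Lemma~\ref{lem:boundinv} for the boundedness of $\renorm$.
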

We proceed to establish a $\mathcal{N}(0, \Sigma_N)$ approximation for $\mathcal{L}\lr{\Psi \theta_N - \Psi_N\mid \mathcal{D}_N}$. More precisely, let 
\begin{align}\label{eq:zeq}
  Z_N \coloneqq \sqrt{N}\renorm\lr{\Psi \theta_N - \Psi_N},
\end{align}
and recall the notion of weak convergence in probability from Section~\ref{subsec:notation}. We will show $\mathcal{L}\lr{Z_N \mid \mathcal{D}_N} \dlim \mathcal{N}(0, I_k)$ in $\Ptheta$-probability. We argue by convergence of the Laplace transform
\begin{align}\label{eq:abovelimit}
  \Epostreg{\bar{\Theta}_N}{\exp\lr{\lambda^\intercal Z_N}} \plim \exp\lr{\frac{1}{2} \norm{\lambda}{\rk}^2}, \quad \text{for all }\lambda \in \rk,
\end{align}
where $\bar{\Theta}_N \subset \Theta$ are sets of high probability.

The left-hand side of \eqref{eq:abovelimit} is a ratio of prior expectations, which include a likelihood factor. To obtain the limit, we shift the likelihood using an appropriate 'perturbation vector', and control the difference.

Recall matrix multiplication with elements of $\h^k$ from Section~\ref{subsec:notation}, and define the perturbation vector
\begin{align}\label{eq:perdef}
   \bs{h}_N  \coloneqq \frac{1}{\sqrt{N}}\renorm\perturb \in \h^k.
\end{align} 
It will be important to control the magnitude of $\bs{h}_N$.

Our next two lemmas establish the aforementioned likelihood shift. The reader should recall \eqref{eq:unifop}, and $s_N$ from Hypothesis~\ref{cond:posteriorcontraction}.
\begin{lemma}\label{lem:empiricalprocess}
  Fix $\lambda \in \rk$ and $M > 0$. Assume:
  \begin{enumerate}
    \item $\sup_{\theta \in \regset} \norm{\forward\lr{\theta +  \lambda^\intercal \bs{h}_N }  - \forward\lr{\theta} }{L^2} = o\lr{\delta_N^\mathcal{G}}$, 
    \item $\norm{\perturb}{\mathcal{R}} = o\lr{\sqrt{N}}$,
    \item $\sqrt{N}r_N  J_N\lr{\sigma_N /r_N} \longrightarrow 0$,
    \item $\sqrt{\log N} r_N^3 \delta_N^\Theta J_{N}(\sigma_N /r_N)^2 /\sigma_N^2 \longrightarrow 0$,
    \item $\sqrt{N} g_N^2 \delta_N^\Theta J_{N}\lr{\delta_N^\Theta} \longrightarrow 0$,
    \item $g_N J_{N}\lr{\delta_N^\Theta} \longrightarrow 0$.
  \end{enumerate}
  Then, uniformly over $\theta \in \regset \cup \lr{\regset -  \lambda^\intercal \bs{h}_N }$, we have
  \begin{align}\label{eq:emplimits1}
    \sum_{i=1}^N \ip{\varepsilon_i}{R_\theta(X_i)}{V} &= o_P(1), \\ \sum_{i = 1}^N \lr{\lvert \forward_{\truth, \theta}(X_i) \rvert_V^2 - \Vert \forward_{\truth, \theta} \Vert_{L^2}^2} &= o_P(1). \label{eq:emplimits2}
  \end{align}
\end{lemma}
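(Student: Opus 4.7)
The plan is to first absorb the translate $\regset - \lambda^\intercal \bs{h}_N$ into a slightly larger $\Theta_{N,M'}$, and then establish both displays as uniform maximal inequalities by empirical-process chaining. For the extension step, membership in $E_D$ is immediate because $\perturb \in E_D^k$; the $\forward$-bound defining $\regset$ is preserved up to $o(\delta_N^\forward)$ by assumption~(1) applied with $-\lambda$; and the $\mathcal{R}$-bound is preserved because
$\Vert \lambda^\intercal \bs{h}_N \Vert_{\mathcal{R}} \lesssim \Vert \lambda \Vert_{\rk}\, N^{-1/2}\, \Vert \renorm \Vert_{\text{op}}\, \Vert \perturb \Vert_{\mathcal{R}} = o(1)$
by assumption~(2) together with the operator-norm control on $\renorm$ from the calibration of $[\fishermatrix]$. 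After this reduction, both claims need only be proved uniformly over a single $\Theta_{N,M'}$.

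\emph{First limit \eqref{eq:emplimits1}.} Conditional on $X_1, \ldots, X_N$, the map $\theta \mapsto \sum_i \ip{\varepsilon_i}{R_\theta(X_i)}{V}$ is a centered Gaussian process vanishing at $\truth$, with intrinsic metric $d_{sg}(\theta,\theta')^2 = \sum_i \Vert R_\theta(X_i) - R_{\theta'}(X_i) \Vert_V^2$. Hypothesis~\ref{cond:4}(4) gives $d_{sg} \le \sqrt{N}\, r_N d_\Theta$, while hypothesis~\ref{cond:4}(2) controls the $d_{sg}$-diameter by roughly $\sqrt{N}\sigma_N$ after a Bernstein step applied to $\sum_i \Vert R_\theta(X_i)\Vert_V^2$, using the uniform bound $\Vert R_\theta\Vert_\infty \le r_N \delta_N^\Theta$. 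Dudley's entropy inequality combined with a change of variable $u = \varepsilon/(\sqrt{N} r_N)$ then yields
$\mathbb{E}\bigl[\sup_{\theta \in \Theta_{N,M'}} \vert \sum_i \ip{\varepsilon_i}{R_\theta(X_i)}{V} \vert \bigm| X\bigr] \lesssim \sqrt{N}\, r_N J_N(\sigma_N/r_N)$,
which tends to zero by assumption~(3). Upgrading this conditional-expectation bound to $o_P(1)$ is done via Borell–TIS (or equivalently generic chaining) with weak variance controlled by $r_N \delta_N^\Theta$; assumption~(4) is precisely the condition that forces the $\sqrt{\log N}$-inflated concentration correction to be negligible relative to the Dudley main term.

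\emph{Second limit \eqref{eq:emplimits2}.} The process $\theta \mapsto \sum_i \bigl(\Vert \forward_{\truth,\theta}(X_i)\Vert_V^2 - \Vert \forward_{\truth,\theta}\Vert_{L^2}^2\bigr)$ has bounded summands vanishing at $\theta = \truth$. Writing $a^2 - b^2 = (a-b)(a+b)$ with $\Vert \forward_{\truth,\theta}\Vert_\infty \le g_N \delta_N^\Theta$ produces an $L^\infty$-modulus of order $g_N^2 \delta_N^\Theta d_\Theta(\theta,\theta')$ and an $L^2(P)$-modulus of order $g_N \delta_N^\forward d_\Theta(\theta,\theta')$. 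A Talagrand–Bousquet chaining inequality then bounds the expected supremum by $\sqrt{N}\, g_N^2 \delta_N^\Theta J_N(\delta_N^\Theta)$, which vanishes by assumption~(5), while assumption~(6) ensures $g_N J_N(\delta_N^\Theta) \to 0$, so that the uniform modulus is small enough to keep the chaining in the Bernstein regime and make Bousquet's concentration correction of lower order. The hard part of the whole lemma is the first process: handling the random intrinsic metric conditional on $X$, shuttling between the pointwise-Lipschitz parameter $r_N$ and the $L^2$-size $\sigma_N$, and balancing the Borell–TIS tail against the Dudley main term are precisely the reasons the seemingly cumbersome hypothesis~(4) takes exactly its stated form.
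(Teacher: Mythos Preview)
Your reduction step---absorbing $\regset - \lambda^\intercal\bs{h}_N$ into a larger $\Theta_{N,M'}$ via assumptions (1)--(2)---matches the paper exactly; this is the content of the auxiliary Lemma~\ref{lem:reginc}.

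For the empirical-process part, however, your conditional-Gaussian route for \eqref{eq:emplimits1} has a real gap. To get the Dudley integral with upper limit $\sigma_N/r_N$ (rather than the crude Lipschitz radius $\delta_N^\Theta$), you need $\sup_\theta \sum_i \|R_\theta(X_i)\|_V^2 \lesssim N\sigma_N^2$ with high probability---but this is a \emph{uniform} empirical-process problem in its own right, not a pointwise Bernstein step. You do not sketch how to close this, and it is precisely the localisation that a bare Dudley-plus-Borell--TIS decomposition does not supply. Moreover, your reading of assumption (4) as a Borell--TIS correction does not match its form: the factor $J_N^2$ together with the $\sqrt{\log N}$ (which in the paper arises from $\|\max_{i\le N}\|\varepsilon_i\|_V\|_{L^2}$) is the signature of the second, envelope-maximum term in a Koltchinskii--Pollard type maximal inequality, not a sub-Gaussian tail.

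The paper sidesteps all of this by applying an \emph{unconditional} maximal inequality (the Gin\'e--Nickl bound recorded as Theorem~\ref{thm:empb}) directly to the class $\{f_\theta(x,e)=\langle e,R_\theta(x)\rangle_V : \theta\in\Theta_{N,M'}\}$ under $P=\lambda_{\mathcal X}\otimes\mathcal N_V$, with envelope $F_N(x,e)\asymp\|e\|_V\, r_N\delta_N^\Theta$ and weak-variance parameter $s\asymp\sigma_N$. That theorem outputs $\max\{I,II\}$ with $I\lesssim\sqrt{N}\,r_N J_N(\sigma_N/r_N)$ and $II\lesssim\sqrt{\log N}\,r_N^3\delta_N^\Theta J_N(\sigma_N/r_N)^2/\sigma_N^2$; assumptions (3) and (4) kill each term directly, and the localisation to scale $\sigma_N$ is built into the theorem via the ratio $d=s/\|F_N\|_{L^2(P)}$. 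For \eqref{eq:emplimits2} the same theorem applied to $f_\theta(x)=\|\forward_{\theta_0,\theta}(x)\|_V^2$ gives $I\lesssim\sqrt{N}\,g_N^2\delta_N^\Theta J_N(\delta_N^\Theta)$ and $II\lesssim(g_N J_N(\delta_N^\Theta))^2$, controlled by (5) and (6). Your Talagrand--Bousquet sketch for the second limit is close in spirit, but again (6) is not a ``Bernstein-regime'' condition---it is simply the second term of the same two-term bound.
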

\begin{remark}\label{rem:lip}
  If $\forward : \lr{\Theta \cap B_\mathcal{R}(M), \norm{\cdot}{\h}} \longrightarrow \lr{L^2(\mathcal{X}), \norm{\cdot}{L^2}}$ is Lipschitz continuous for all $M > 0$, then $\sqrt{s_N} \eigmin{\fishermatrix} \longrightarrow \infty$ implies Assumption~1.
\end{remark} 
\begin{lemma}\label{lem:lla}
  Fix $\lambda \in \rk$ and $M > 0$. Grant the assumptions of Lemma~\ref{lem:empiricalprocess}. Assume:
  \begin{enumerate}
    \item $N\lr{\sigma_N^2 + \sigma_N \delta^\h_N } \longrightarrow 0$,
    \item $\sqrt{N}\norm{\truth - P_{E_D}\truth}{\h} \longrightarrow 0$,
    \item $\sqrt{N} \delta^\h_N \norm{\bs{\psi} - P_{E_D}\bs{\psi}}{\h} \longrightarrow 0$.
  \end{enumerate}
  Let $Z_N(\theta) \coloneqq \sqrt{N}\renorm\lr{\Psi\theta - \Psi_N}$ for $\theta \in \Theta$. Then,
\begin{align}\label{eq:llr}
  \ell_N(\theta) - \ell_N\lr{\theta -  \lambda^\intercal \bs{h}_N }  = \frac{1}{2}\Vert \lambda \Vert^2_{\rk} - \lambda^\intercal Z_N(\theta) + o_P(1), \quad \text{uniformly over }\theta \in \regset.
\end{align} 
\end{lemma}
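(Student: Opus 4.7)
The plan is to expand $\ell_N(\theta) - \ell_N(\theta - \lambda^\intercal \bs{h}_N)$ from $Y_i = \forward_\truth(X_i) + \varepsilon_i$, substitute the linearizations $\forward_\theta - \forward_{\theta-u} = \lin u + R_\theta - R_{\theta-u}$ and $\forward_\truth - \forward_{\theta-u} = -\lin(\theta - u - \truth) - R_{\theta - u}$, and separate a closed-form ``main'' piece from residuals, each containing at least one factor of $R_\theta$ or $R_\theta - R_{\theta-u}$. Abbreviating $u := \lambda^\intercal \bs{h}_N = N^{-1/2}\mu^\intercal \perturb$ with $\mu := \renorm \lambda$, completing squares gives
\begin{align*}
\ell_N(\theta) - \ell_N(\theta - u) = \sum_i \langle \Delta_i, \varepsilon_i\rangle_V - \tfrac12 \sum_i \|\Delta_i\|_V^2 + \sum_i \langle \Delta_i, (\forward_\truth - \forward_{\theta-u})(X_i)\rangle_V,
\end{align*}
with $\Delta_i := (\forward_\theta - \forward_{\theta-u})(X_i)$. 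After substitution, the $\lin u$-part of the third sum cancels one copy of $\sum_i \|\lin u\|_V^2$, leaving the main piece
\begin{align*}
M_N(\theta) := \sum_i \langle \lin u(X_i), \varepsilon_i\rangle_V + \tfrac12 \sum_i \|\lin u(X_i)\|_V^2 - \sum_i \langle \lin u, \lin(\theta - \truth)\rangle_V(X_i).
\end{align*}

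The next step is to evaluate $M_N$. The noise piece equals $\sqrt{N}\lambda^\intercal \renorm(\Psi_N - \Psi\truth)$ exactly, by \eqref{eq:centering}. The quadratic piece has expectation $N\|\lin u\|_{L^2}^2 = \mu^\intercal i_D^{-1} \mu = \|\lambda\|_{\rk}^2$, using $(i_D^{-1})_{ij} = \langle \lin \pertur^i, \lin \pertur^j\rangle_{L^2}$ together with $\renorm^2 = i_D$; the fluctuation is $o_P(1)$ by \eqref{eq:emplimits2} applied to $\forward_\theta - \forward_{\theta-u}$ at any fixed $\theta$, after absorbing the $R$-cross terms into the residuals. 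Using the symmetry of $\mathcal{I} = \lin^*\lin$ and the identity $\fishermatrix \pertur^j = P_{E_D}\psi_j$, the cross-piece expectation evaluates, for $\theta \in E_D$, to
\begin{align*}
N \langle \lin u, \lin(\theta - \truth)\rangle_{L^2} = \sqrt{N}\lambda^\intercal \renorm (\Psi\theta - \Psi\truth) + \sqrt{N}\lambda^\intercal \renorm\bigpar{\langle (I-P_{E_D})\bs{\psi}, \truth\rangle_\h - \langle \mathcal{I}\perturb, (I-P_{E_D})\truth\rangle_\h},
\end{align*}
and the two projection corrections are $o(1)$ by Assumptions~2 and 3. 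Combining yields $M_N(\theta) = \tfrac12\|\lambda\|_{\rk}^2 - \lambda^\intercal Z_N(\theta) + o_P(1)$, up to the uniform empirical-process step for the cross piece.

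The residuals are controlled termwise by Cauchy–Schwarz in the empirical $\ell^2$-inner product. The noise-residual $\sum_i \langle R_\theta - R_{\theta-u}, \varepsilon_i\rangle_V$ is $o_P(1)$ uniformly by \eqref{eq:emplimits1} applied at both $\theta$ and $\theta - u$---precisely why Lemma~\ref{lem:empiricalprocess} is stated on $\regset \cup (\regset - \lambda^\intercal \bs{h}_N)$. Hypothesis~\ref{cond:4} gives $\sum_i \|R_\theta - R_{\theta-u}\|_V^2 \lesssim N r_N^2 d_\Theta(\theta, \theta - u)^2$, and Assumption~1 dispatches deterministic cross terms such as $\lvert\sum_i \langle \lin u, R_{\theta-u}\rangle_V\rvert \lesssim \sqrt{N}\sigma_N + o_P(1) = o_P(1)$ and $\lvert\sum_i \langle \lin(\theta - u - \truth), R_{\theta-u}\rangle_V\rvert \lesssim N \delta_N^\h \sigma_N + o_P(1) = o_P(1)$.

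The main technical hurdle is the uniform approximation $\sum_i \langle \lin u, \lin(\theta - \truth)\rangle_V(X_i) = N \langle \lin u, \lin(\theta - \truth)\rangle_{L^2} + o_P(1)$ over $\theta \in \regset$, as this is the only genuinely $\theta$-indexed piece of $M_N$. The plan is to use $\lin(\theta - \theta') = (\forward_\theta - \forward_{\theta'}) - (R_\theta - R_{\theta'})$ to derive the $d_\Theta$-Lipschitz bound $\|\lin(\theta - \theta')\|_{L^\infty} \le (g_N + r_N) d_\Theta(\theta, \theta')$ from Hypothesis~\ref{cond:4}, and apply a Dudley-type chaining maximal inequality against the entropy integral $J_N(\delta_N^\Theta)$; the $\|\lin u\|_{L^\infty} \lesssim N^{-1/2}\|\lin \perturb\|_{L^\infty}$ prefactor supplies the $1/\sqrt{N}$ gain needed under the assumptions shared with Lemma~\ref{lem:empiricalprocess}.
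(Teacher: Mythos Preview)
Your algebraic expansion is correct, and the identification of the noise piece with $\sqrt{N}\lambda^\intercal\renorm(\Psi_N-\Psi\truth)$ is exactly right. The gap is in how you handle the quadratic and cross pieces of $M_N(\theta)$: both require pointwise control of $\lin u$ that is \emph{not} among the hypotheses of this lemma.

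Concretely, the fluctuation of $\tfrac12\sum_i\|\lin u(X_i)\|_V^2$ around its mean $\tfrac12\|\lambda\|_{\rk}^2$ has variance of order $N\|\lin u\|_{L^4}^4$, and your chaining argument for the empirical cross term $\sum_i\langle\lin u(X_i),\lin(\theta-\truth)(X_i)\rangle_V$ needs a Lipschitz/envelope bound involving $\|\lin u\|_{L^\infty}$. You invoke ``$\|\lin u\|_{L^\infty}\lesssim N^{-1/2}\|\lin\perturb\|_{L^\infty}$'' as if it were available, but the bound $\|\lin\perturb\|_{L^\infty}\lesssim N^r$ is the hypothesis of Lemma~\ref{lem:asymptnorm}, not of Lemma~\ref{lem:empiricalprocess} or of the present lemma. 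Your appeal to \eqref{eq:emplimits2} for the quadratic piece does not work either: that estimate concerns $\|\forward_{\truth,\theta}\|_V^2$, not $\|\forward_\theta-\forward_{\theta-u}\|_V^2$, and polarization would again produce the uncontrolled empirical cross term.

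The paper sidesteps this entirely by \emph{not} splitting $\forward_{\truth,\theta}$ into $\lin(\theta-\truth)+R_\theta$ until \emph{after} applying \eqref{eq:emplimits2}. That is, it writes $\ell_N(\theta)-\ell_N(\theta')$ as the noise term plus $\tfrac{N}{2}\big(\|\forward_{\truth,\theta'}\|_{L^2}^2-\|\forward_{\truth,\theta}\|_{L^2}^2\big)+o_P(1)$, and only then linearizes at the deterministic $L^2$ level, where Assumption~1 and Cauchy--Schwarz dispose of the $R_\theta$ contributions as $o(1/N)$. The cross term $\langle\lin u,\lin(\theta-\truth)\rangle_{L^2}$ therefore appears only as a deterministic inner product, requiring no empirical-process argument and no $L^\infty$ control of $\lin u$ whatsoever. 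Reorganize your decomposition along these lines and the proof goes through under the stated hypotheses.
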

We now obtain a posterior normal approximation around $\Psi_N$, along with convergence of moments. Before stating the result, we recall $a < b$ from Hypothesis~\ref{cond:posteriorcontraction}, and \eqref{eq:zeq}.
\begin{lemma}\label{lem:prjbvm}
  Grant the assumptions of Lemma~\ref{lem:lla}. Assume:
  \begin{enumerate}
    \item $\delta_N \norm{\bar{\bs{\psi}}_{D}}{\pspace} \longrightarrow 0$,
    \item $N e^{-(b - a)s_N} \mathbb{E}^\Pi \norm{\theta_N}{\h}^4  \longrightarrow 0$.
  \end{enumerate}
   Then,
   \begin{align}\label{eq:distlim}
    \mathcal{L}\lr{Z_N \mid \mathcal{D}_N} &\dlim \mathcal{N}(0, I_k), \quad \text{in $\Ptheta$-probability}, \\
    \mathbb{E}[Z_N \mid \mathcal{D}_N] &\plim 0 \label{eq:expp}, \\
    \mathbb{V}[Z_N \mid \mathcal{D}_N] &\plim I_k \label{eq:var}.
\end{align}
\end{lemma}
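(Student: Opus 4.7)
The plan is to derive \eqref{eq:distlim} from convergence of the posterior Laplace transform $\Epostreg{\regset}{e^{\lambda^\intercal Z_N}} \plim e^{\frac{1}{2}\norm{\lambda}{\rk}^2}$ for each $\lambda \in \rk$, and then obtain \eqref{eq:expp}--\eqref{eq:var} by a uniform-integrability argument. Hypothesis~\ref{cond:posteriorcontraction}(1) gives $\posterior{\regset^c} = O_P(e^{-bs_N})$, so throughout it suffices to work with the restricted posterior $\Pi_N^{\regset}(\cdot \mid \mathcal{D}_N)$ at the price of an $o_P(1)$ error.

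Fix $\lambda \in \rk$ and recall $\hperturb \in E_D$. Writing the restricted Laplace transform as a ratio of prior integrals over $\regset$, Lemma~\ref{lem:lla} gives, uniformly over $\theta \in \regset$, the likelihood shift
\begin{align*}
  \ell_N(\theta) + \lambda^\intercal Z_N(\theta) = \ell_N(\theta - \hperturb) + \tfrac{1}{2}\norm{\lambda}{\rk}^2 + o_P(1),
\end{align*}
so the numerator equals $e^{\frac{1}{2}\norm{\lambda}{\rk}^2}(1 + o_P(1)) \int_{\regset} e^{\ell_N(\theta - \hperturb)} \, d\Pi_N(\theta)$. Applying the Cameron-Martin formula for the centered Gaussian prior on $(E_D, \langle\cdot,\cdot\rangle_{\pspace})$ with $\hperturb \in E_D$ rewrites this as
\begin{align*}
  e^{\frac{1}{2}\norm{\lambda}{\rk}^2}(1 + o_P(1)) \int_{\regset - \hperturb} e^{\ell_N(\theta)} \exp\!\bigl( \langle \theta, \hperturb \rangle_{\pspace} - \tfrac{1}{2}\norm{\hperturb}{\pspace}^2 \bigr) \, d\Pi_N(\theta).
\end{align*}
Assumption~(1) forces $\norm{\hperturb}{\pspace} \to 0$: the quadratic exponent is $o(1)$, and the linear one is $o_P(1)$ uniformly by Cauchy-Schwarz together with the $\pspace$-norm control on $\regset$ inherited from prior concentration. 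Moreover $\norm{\hperturb}{\h} \to 0$ renders the symmetric difference $\regset \triangle (\regset - \hperturb)$ asymptotically negligible under the posterior. Combining these, the numerator reduces to $e^{\frac{1}{2}\norm{\lambda}{\rk}^2}(1 + o_P(1))$ times the denominator, giving the pointwise Laplace limit; \eqref{eq:distlim} then follows via L\'evy's continuity theorem along a countable dense set of $\lambda$.

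For \eqref{eq:expp}--\eqref{eq:var}, split the posterior expectation over $\regset$ and $\regset^c$. On $\regset$, $\norm{Z_N}{\rk}$ is deterministically bounded (since $\norm{\theta}{\h} \lesssim \delta_N^\h$ there), so \eqref{eq:distlim} and bounded convergence give the restricted moment limits. For the tail, Cauchy-Schwarz yields
\begin{align*}
  \mathbb{E}\bigl[\norm{Z_N}{\rk}^2 \mathbb{1}_{\regset^c} \mid \mathcal{D}_N\bigr] \le \bigl(\mathbb{E}[\norm{Z_N}{\rk}^4 \mid \mathcal{D}_N]\bigr)^{1/2} \bigl(\posterior{\regset^c}\bigr)^{1/2}.
\end{align*}
Hypothesis~\ref{cond:posteriorcontraction}(2) bounds the evidence below by $e^{-as_N}$ on a $\Ptheta$-probable event, and the Fubini identity $\Etheta e^{\ell_N - \ell_N(\truth)} = 1$ then gives $\mathbb{E}[\norm{Z_N}{\rk}^4 \mid \mathcal{D}_N] \lesssim e^{as_N} \mathbb{E}^\Pi \norm{Z_N}{\rk}^4 \lesssim e^{as_N} N^2 \mathbb{E}^\Pi \norm{\theta_N}{\h}^4$ (using $Z_N = \sqrt{N}\renorm(\Psi \theta_N - \Psi_N)$ and the definition of $\mathcal{J}_D$). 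Together with $\posterior{\regset^c} = O_P(e^{-bs_N})$ and assumption~(2), the tail is $o_P(1)$.

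The main obstacle is the Cameron-Martin step: the Radon-Nikodym factor, the geometric displacement $\regset \triangle (\regset - \hperturb)$, and the uniform error from Lemma~\ref{lem:lla} must interlock to produce an overall $1 + o_P(1)$ multiplicative error. Lemma~\ref{lem:empiricalprocess} is only valid over $\regset \cup (\regset - \hperturb)$, so the shifted region must be handled within this same scope; assumption~(1) on $\norm{\perturb}{\pspace}$ is exactly what keeps the shift infinitesimal in the Cameron-Martin sense and closes the loop.
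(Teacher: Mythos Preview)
Your overall strategy matches the paper's, but two steps contain genuine gaps.

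First, the Cameron--Martin linear term $\langle \theta, \hperturb\rangle_{\pspace}$ is not controlled on $\regset$ as you claim. The definition of $\regset$ constrains $\|\theta\|_{\mathcal{R}}$ and $\|\mathcal{G}_\theta - \mathcal{G}_{\theta_0}\|_{L^2}$, but imposes no bound on $\|\theta\|_{\pspace}$; the norms $\|\cdot\|_{\mathcal{R}}$ and $\|\cdot\|_{\pspace}$ are unrelated in the general hypotheses, so ``$\pspace$-norm control inherited from prior concentration'' does not exist. The paper's fix is to work on $\bar{\Theta}_N \coloneqq \Theta_{N,M'} \cap A_N$ where $A_N = \bigcap_i \{|\langle \theta, h_N^i\rangle_{\pspace}| \le 2\sqrt{cs_N}\|h_N^i\|_{\pspace}\}$. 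Since each $\langle \theta_N, h_N^i\rangle_{\pspace}$ is a scalar Gaussian under $\Pi_N$, the tail bound gives $\Pi_N(A_N^c) \le e^{-cs_N}$, whence $\posterior{A_N^c} = O_P(e^{-bs_N})$ by Hypothesis~\ref{cond:posteriorcontraction}. On $A_N$ the linear term is bounded by a constant times $\sqrt{s_N}\|\bs{h}_N\|_{\pspace} \lesssim \delta_N^{\mathcal{G}}\|\perturb\|_{\pspace} \to 0$ via Assumption~1. The auxiliary set also makes the symmetric-difference step precise: one shows $\tfrac{1}{2}A_N \subset A_N - \hperturb$ directly, and $\Theta_{N,M'} - \hperturb \supset \Theta_{N,M_0}$ via Lemma~\ref{lem:reginc}, so that $\posterior{\bar{\Theta}_N - \hperturb} \plim 1$.

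Second, your claim that $\|Z_N\|_{\rk}$ is deterministically bounded on $\regset$ is false: the $\theta$-dependent part of $Z_N(\theta)$ is $\sqrt{N}\renorm\Psi(\theta - \theta_0)$, which on $\regset$ is of order $\sqrt{N}\delta_N^{\h} = N^{1/2 - r_2} \to \infty$ since $r_2 < 1/2$. Bounded convergence therefore does not apply. The paper instead obtains uniform integrability of $(\lambda^\intercal Z_N)^2$ under $\Pi_N^{\bar{\Theta}_N}(\cdot \mid \mathcal{D}_N)$ directly from the Laplace transform: since $x^4 \le 12(e^x + e^{-x})$, the already-established convergence $M_N(\lambda) + M_N(-\lambda) \plim 2e^{\|\lambda\|^2/2}$ bounds the conditional fourth moment, whence the restricted first and second moments converge. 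Removing the restriction to $\bar{\Theta}_N$ then proceeds roughly as you sketch, but with the essential decomposition $\lambda^\intercal Z_N = I' - II'$ where $I' = \sqrt{N}\lambda^\intercal\renorm\Psi(\theta_N - \theta_0)$ is data-free and $II'$ is a statistic; this separation is needed because $Z_N$ depends on the data through $\Psi_N$, so your Fubini identity $\Etheta e^{\ell_N - \ell_N(\theta_0)} = 1$ cannot be applied to $\|Z_N\|^4$ directly.
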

We now interchange $\Psi_N$ with $\hat{\Psi}_N$ in \eqref{eq:inlem} and \eqref{eq:distlim}. We justify this by Slutsky's lemma and \eqref{eq:expp}, which states $\sqrt{N}\renorm\bigpar{\Psi_N - \hat{\Psi}_N } \plim 0$.

This yields our first main result: the posterior $\Pi_N^\Psi(\cdot \mid \mathcal{D}_N)$, centered as its mean, and the posterior mean $\hat{\Psi}_N$, centered at the ground truth $\Psi \theta_0$, both have an asymptotic $\mathcal{N}(0, \Sigma_N)$ approximation.
\begin{theorem}[Renormalized BvM]\label{thm:rnm}
  Grant the conclusions of Lemma~\ref{lem:asymptnorm} and Lemma~\ref{lem:prjbvm}. Then,
  \begin{align}\label{eq:finaldlim}
    \mathcal{L}\lr{\sqrt{N}i_{D}^{1/2}\bigpar{\Psi\theta_N-\hat{\Psi}_N} \mid \mathcal{D}_N} &\dlim \mathcal{N}\lr{0, I_k}, \quad \text{in } \Ptheta \text{-probability},
    \\ \label{eq:finalplim}
    \sqrt{N}i_{D}^{1/2}\bigpar{\hat{\Psi}_N - \Psi\truth} &\dlim \mathcal{N}\lr{0, I_k}.
  \end{align}
\end{theorem}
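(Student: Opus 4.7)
The plan is to reduce both displays to their analogues centered at the statistic $\Psi_N$, which are furnished by Lemmas~\ref{lem:asymptnorm} and \ref{lem:prjbvm}, and to bridge the two centerings via Slutsky's lemma. The key observation is that $\Psi_N$ is a function of $\mathcal{D}_N$ only, hence $\Pi_N(\cdot \mid \mathcal{D}_N)$-almost surely constant, so
\begin{align*}
  \mathbb{E}[Z_N \mid \mathcal{D}_N] = \sqrt{N}\renorm\bigpar{\hat{\Psi}_N - \Psi_N},
\end{align*}
and the conclusion $\mathbb{E}[Z_N \mid \mathcal{D}_N] \plim 0$ from Lemma~\ref{lem:prjbvm} is exactly the statement that $\sqrt{N}\renorm(\hat{\Psi}_N - \Psi_N) = o_P(1)$.

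With this in hand, \eqref{eq:finalplim} follows from the decomposition
\begin{align*}
  \sqrt{N}\renorm\bigpar{\hat{\Psi}_N - \Psi\truth} = \sqrt{N}\renorm\bigpar{\hat{\Psi}_N - \Psi_N} + \sqrt{N}\renorm\bigpar{\Psi_N - \Psi\truth},
\end{align*}
together with Slutsky's lemma applied to the $o_P(1)$ first term and the $\mathcal{N}(0, I_k)$-limit for the second term supplied by Lemma~\ref{lem:asymptnorm}.

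For \eqref{eq:finaldlim}, I write the target as $Z_N + c_N$ with $c_N \coloneqq -\sqrt{N}\renorm(\hat{\Psi}_N - \Psi_N)$, which is $\mathcal{D}_N$-measurable and $o_P(1)$. Since translating any probability measure on $\rk$ by a fixed vector $c$ perturbs integrals of functions in the unit ball $\text{BL}_1$ by at most $\min(\Vert c \Vert_{\rk}, 2)$, this gives the pointwise inequality
\begin{align*}
  d_{\text{BL}}\bigpar{\mathcal{L}(Z_N + c_N \mid \mathcal{D}_N), \mathcal{N}(0, I_k)} \le d_{\text{BL}}\bigpar{\mathcal{L}(Z_N \mid \mathcal{D}_N), \mathcal{N}(0, I_k)} + \min\bigpar{\Vert c_N \Vert_{\rk}, 2}.
\end{align*}
Both right-hand-side terms tend to zero in $\Ptheta$-probability, the first by \eqref{eq:distlim} and the second since $c_N \plim 0$, which is precisely \eqref{eq:finaldlim}.

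No step of this argument is genuinely difficult; the heavy lifting is already packaged in Lemmas~\ref{lem:asymptnorm} and \ref{lem:prjbvm}. The only point worth spelling out carefully is the conditional Slutsky step for \eqref{eq:finaldlim}, because weak convergence in $\Ptheta$-probability is preserved under translation by data-measurable quantities only when one unpacks the BL metric as above, rather than invoking the ordinary unconditional Slutsky lemma.
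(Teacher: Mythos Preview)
Your proof is correct and follows the same overall strategy as the paper: identify $\mathbb{E}[Z_N\mid\mathcal{D}_N]=\sqrt{N}\renorm(\hat{\Psi}_N-\Psi_N)$, use \eqref{eq:expp} to make it $o_P(1)$, and then upgrade the $\Psi_N$-centered limits from Lemmas~\ref{lem:asymptnorm} and \ref{lem:prjbvm} to $\hat{\Psi}_N$-centered ones via Slutsky. The one place you diverge is the conditional Slutsky step for \eqref{eq:finaldlim}: you bound the $d_{\text{BL}}$ perturbation under a deterministic translation directly, whereas the paper invokes Remark~\ref{rem:subseqence} to pass to almost-sure subsequential limits and then applies the ordinary Slutsky lemma pointwise in $\omega$. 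Both devices are standard; your inequality is slightly more self-contained (no subsequence bookkeeping), while the paper's subsequence trick is more reusable for later arguments that are not pure translations (e.g.\ the proofs of Theorem~\ref{thm:recov} and Lemma~\ref{lem:pivot}). Your remark that one cannot simply invoke the unconditional Slutsky lemma is well taken, but note that the paper does in fact use ordinary Slutsky---just after first reducing convergence in probability to almost-sure convergence along subsequences.
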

Conditions for a semi-parametric BvM were derived in \cite{Nickl_2020,Monard_2021, Nickl2023}. A central condition is the existence of a sufficiently regular solution $\bs{\phi}$ to the information equation $\mathcal{I} \bs{\phi} = \bs{\psi}$. We obtain a similar result:
\begin{theorem}[Semi-Parametric BvM]\label{thm:recov}
  Assume there exist $\bs{\phi} = (\phi_1, \ldots, \phi_k) \in \mathbb{H}^k$ such that:
  \begin{enumerate}
    \item $\mathcal{I}\bs{\phi} = \bs{\psi}$,
    \item $\norm{\bs{\phi} - \proj \bs{\phi}}{\h} / \eigmin{\fishermatrix} \longrightarrow 0$,
    \item $\lin $ is injective on $\text{span}\{\phi_1, \ldots, \phi_k \}$.
  \end{enumerate}
  Then $i_{D}^{-1} \longrightarrow  L$, where $L_{ij} \coloneqq \ip{\phi_i}{\psi_j}{\h}$ for $i,j \le k$. 
  
  Assume further the conclusions of Theorem~\ref{thm:rnm}. Then,
  \begin{align}\label{eq:sem2}
    \mathcal{L}\lr{\sqrt{N}\bigpar{\Psi\theta_N-\hat{\Psi}_N} \mid \mathcal{D}_N} &\dlim \mathcal{N}(0, L), \quad \text{in } \Ptheta \text{-probability},
    \\ \label{eq:sem1}
    \sqrt{N}\bigpar{\hat{\Psi}_N - \Psi\truth } &\dlim \mathcal{N}(0, L).
  \end{align}
\end{theorem}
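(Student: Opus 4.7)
The plan is to first establish $i_D^{-1} \to L$ together with invertibility of $L$, and then deduce the two distributional limits (\ref{eq:sem1})--(\ref{eq:sem2}) from the conclusions of Theorem~\ref{thm:rnm} by a Slutsky argument based on the deterministic convergence $i_D^{-1/2} \to L^{1/2}$.

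For the convergence of $i_D^{-1}$, I would unpack
\begin{align*}
\lr{i_D^{-1}}_{ij} = \lr{\mathcal{J}_D [\fishermatrix]^{-1} \mathcal{J}_D^\intercal}_{ij} = \hprod{\proj \psi_i}{\bar{\psi}_D^j},
\end{align*}
using the definitions in (\ref{eq:finiteFisher1}) and (\ref{eq:renormdef}), and then compare $\bar{\psi}_D^j = \fishermatrix^{-1}\proj \psi_j$ to $\proj \phi_j$. Since $\fishermatrix = \proj \fisher \iota_{E_D}$ and $\fisher \phi_j = \psi_j$ by Assumption~1, a direct calculation gives
\begin{align*}
\fishermatrix\bigpar{\bar{\psi}_D^j - \proj \phi_j} = \proj \psi_j - \proj \fisher \proj \phi_j = \proj \fisher\bigpar{\phi_j - \proj \phi_j}.
\end{align*}
Inverting $\fishermatrix$, and using $\norm{\fishermatrix^{-1}}{\text{op}} = 1/\eigmin{\fishermatrix}$ together with boundedness of $\fisher = \lin^*\lin$, yields
\begin{align*}
\norm{\bar{\psi}_D^j - \proj \phi_j}{\h} \lesssim \norm{\phi_j - \proj \phi_j}{\h} / \eigmin{\fishermatrix} \longrightarrow 0
\end{align*}
by Assumption~2. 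Splitting $(i_D^{-1})_{ij}$ accordingly, the contribution involving $\bar{\psi}_D^j - \proj \phi_j$ is controlled by Cauchy-Schwarz, and $\hprod{\proj \psi_i}{\proj \phi_j} = \hprod{\proj \psi_i}{\phi_j} \to \hprod{\psi_i}{\phi_j}$ since $\proj \psi_i \to \psi_i$ in $\h$. Self-adjointness of $\fisher$ gives $\hprod{\psi_i}{\phi_j} = \hprod{\phi_i}{\psi_j} = L_{ij}$, proving $i_D^{-1} \to L$. Moreover $L_{ij} = \ip{\lin \phi_i}{\lin \phi_j}{L^2}$ is a Gram matrix, and Assumption~3 forces $\lin \phi_1, \ldots, \lin \phi_k$ to be linearly independent, so $L$ is positive definite and hence invertible.

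With $L$ invertible, continuity of the matrix square root on positive-definite matrices gives $i_D^{-1/2} \to L^{1/2}$ deterministically. Equation~(\ref{eq:sem1}) then follows from Slutsky's lemma applied to $\sqrt{N}\bigpar{\hat{\Psi}_N - \Psi\truth} = i_D^{-1/2} \cdot \sqrt{N}\renorm\bigpar{\hat{\Psi}_N - \Psi\truth}$, combined with (\ref{eq:finalplim}) and the identity $L^{1/2}\mathcal{N}(0, I_k) = \mathcal{N}(0, L)$. For the posterior statement (\ref{eq:sem2}), the deterministic scaling $i_D^{-1/2}$ commutes with conditioning, so $\mathcal{L}\bigpar{\sqrt{N}(\Psi\theta_N - \hat{\Psi}_N) \mid \mathcal{D}_N}$ is the push-forward of the conditional law in (\ref{eq:finaldlim}) by $x \mapsto i_D^{-1/2} x$. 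A triangle inequality in $d_{\text{BL}}$ splits the difference into a stochastic piece, bounded via the standard estimate $d_{\text{BL}}(A_* \mu, A_* \nu) \le (1 \vee \norm{A}{\text{op}}) d_{\text{BL}}(\mu, \nu)$ with $A = i_D^{-1/2}$ (uniformly bounded in $N$), and a deterministic piece $d_{\text{BL}}\bigpar{\mathcal{N}(0, i_D^{-1}), \mathcal{N}(0, L)}$ that vanishes by $i_D^{-1} \to L$.

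The main obstacle is the analysis of the discrepancy $\bar{\psi}_D^j - \proj \phi_j$ and the identification of $L$ as a positive-definite Gram matrix; once these are in place, the passage from Theorem~\ref{thm:rnm} to Theorem~\ref{thm:recov} is a routine Slutsky/continuous-mapping post-processing.
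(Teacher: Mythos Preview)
Your proposal is correct and follows essentially the paper's approach. The one noteworthy variation is in how you establish $i_D^{-1}\to L$: the paper writes $(i_D^{-1})_{ij}=\hprod{\psi_i}{\bar\psi_D^j}=\hprod{\phi_i}{\mathcal I\bar\psi_D^j}$ and then decomposes $\phi_i=P_{E_D}\phi_i+(\phi_i-P_{E_D}\phi_i)$, bounding the remainder via $\norm{\bar\psi_D^j}{\h}\lesssim 1/\eigmin{\fishermatrix}$ together with Assumption~2. You instead prove the operator identity $\fishermatrix(\bar\psi_D^j-\proj\phi_j)=\proj\fisher(\phi_j-\proj\phi_j)$ and invert, which gives the slightly stronger statement $\bar\psi_D^j\to\phi_j$ in $\h$ before taking inner products; this is a clean alternative using the same ingredients. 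For $L\succ 0$ you should make explicit (as the paper does) that the $\phi_i$ are themselves linearly independent---this follows from applying $\mathcal I$ and the assumed linear independence of the $\psi_i$---since Assumption~3 alone only transfers dependence from the $\lin\phi_i$ back to the $\phi_i$. Your treatment of \eqref{eq:sem1}--\eqref{eq:sem2} via $i_D^{-1/2}\to L^{1/2}$ and a $d_{\mathrm{BL}}$ push-forward bound is the same Slutsky post-processing the paper invokes (there phrased through the subsequence remark).
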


\subsection{Validity of Credible Sets}\label{sec:cred}
In the next lemma, we construct pivotal quantities from Theorem~\ref{thm:rnm}. In short, we apply $\norm{\cdot}{\rk}^2$ to \eqref{eq:finaldlim} and \eqref{eq:finalplim}, and then interchange $N i_D$ with $\hat{\Sigma}_N^{-1}$. We justify the interchange by \eqref{eq:var}, which yields the consistency result $N\renorm \variance \renorm \plim I_k$.
\begin{lemma}\label{lem:pivot}
  Grant the conclusions of Theorem~\ref{thm:rnm}, and \eqref{eq:var}.
  
  Then, $\Ptheta\lr{\variance \in \text{GL}_k(\mathbb{R})}\longrightarrow 1$, and
  \begin{align}\label{eq:pivot1}
    \mathcal{L}\lr{\qform{\Psi \theta_N - \hat{\Psi}_N}{\variance^{-1}}{\Psi \theta_N - \hat{\Psi}_N} \mid \mathcal{D}_N} &\dlim \chi_k^2, \quad \text{in }\Ptheta\text{-probability},\\ \label{eq:pivot2}
    \qform{\hat{\Psi}_N - \Psi \theta_0}{\variance^{-1}}{\hat{\Psi}_N - \Psi \theta_0}  &\dlim \chi_k^2.
  \end{align}
\end{lemma}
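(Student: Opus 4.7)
I would derive all three conclusions from the single consistency statement $N \renorm \variance \renorm \plim I_k$. Since $\Psi_N$ from \eqref{eq:centering} is $\mathcal{D}_N$-measurable and $\renorm$ is deterministic, the standard transformation rule applied to \eqref{eq:zeq} gives $\mathbb{V}[Z_N \mid \mathcal{D}_N] = N \renorm \variance \renorm$, and the limit is exactly \eqref{eq:var}. Because matrix inversion is continuous at $I_k$, this matrix is invertible with probability tending to $1$; combined with invertibility of $\renorm$ (Hypothesis~\ref{cond:inv}), this forces $\Ptheta(\variance \in \text{GL}_k(\mathbb{R})) \longrightarrow 1$, and moreover
\[
  B_N \coloneqq \tfrac{1}{N}\, i_D^{-1/2}\, \variance^{-1}\, i_D^{-1/2} - I_k \plim 0.
\]
Off the good event, the convention $\variance^{-1} = I_k$ only alters events of vanishing probability and so does not affect any $\plim$ or $\dlim$ statement below.

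Next I would set $W_N \coloneqq \sqrt{N}\renorm(\Psi\theta_N - \hat{\Psi}_N)$ and $V_N \coloneqq \sqrt{N}\renorm(\hat{\Psi}_N - \Psi\theta_0)$. Using symmetry of $\renorm$, a direct algebraic check yields
\[
  \qform{\Psi\theta_N - \hat{\Psi}_N}{\variance^{-1}}{\Psi\theta_N - \hat{\Psi}_N} = W_N^\intercal (I_k + B_N) W_N,
\]
and the analogous identity for the frequentist quadratic form with $V_N$. Theorem~\ref{thm:rnm} delivers $\mathcal{L}(W_N \mid \mathcal{D}_N) \dlim \mathcal{N}(0, I_k)$ in $\Ptheta$-probability and $V_N \dlim \mathcal{N}(0, I_k)$ unconditionally, so the continuous mapping theorem gives $\chi_k^2$ limits for $\|W_N\|_{\rk}^2$ (conditionally, in probability) and $\|V_N\|_{\rk}^2$ (unconditionally).

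It then remains to absorb the $B_N$ correction. For \eqref{eq:pivot2} the ordinary Slutsky lemma suffices, since $V_N$ is tight and $B_N \plim 0$, whence $V_N^\intercal B_N V_N \plim 0$. The main obstacle is \eqref{eq:pivot1}: one cannot pair the usual Slutsky lemma with an unconditional joint limit, because the convergence is only for the conditional law in $\Ptheta$-probability. The cleanest substitute is to test against a bounded Lipschitz function $f$, split over $\{\|W_N\|_{\rk}^2 \le M\}$, bound the Lipschitz deviation there by $\|f\|_{\text{Lip}}\,\|B_N\|_{\text{op}}\, M$ and by $2\|f\|_\infty$ elsewhere, first letting $N \to \infty$ for fixed $M$, and then $M \to \infty$ using the $\chi_k^2$ tail inherited from the limit of $\|W_N\|_{\rk}^2$. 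This reduces \eqref{eq:pivot1} to the continuous mapping theorem already applied in the previous paragraph.
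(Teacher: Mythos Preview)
Your argument is correct and follows the same skeleton as the paper: derive $N\renorm\variance\renorm\plim I_k$ from \eqref{eq:var}, deduce invertibility of $\variance$ via openness of $\text{GL}_k(\mathbb{R})$, write the quadratic form as $\|W_N\|_{\rk}^2$ plus a correction governed by $B_N\plim 0$, and finish with Slutsky. The only divergence is in how you handle \eqref{eq:pivot1}: the paper invokes its subsequence device (Remark~\ref{rem:subseqence}) to upgrade the convergences to almost-sure along a subsequence, fixes $\omega$, and then applies ordinary Slutsky on an auxiliary space---whereas you stay with the $d_{\text{BL}}$ metric and use an $M$-truncation. Both work; the subsequence route is shorter here because it reduces the conditional-in-probability statement to a deterministic one at each $\omega$, sparing you the tail-control step for $\Pi_N(\|W_N\|_{\rk}^2>M\mid\mathcal{D}_N)$.
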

We arrive at our second main result: A credible ellipsoid, specified by the mean and variance of $\Pi_N^\Psi(\cdot \mid \mathcal{D}_N)$, is an exact asymptotic confidence set for $\Psi \theta_0$. The same holds for a 'Wald-type' ellipsoid. In both cases, the diameter is asymptotically equivalent, in probability, to $\sqrt{\eigmax{\Sigma_N}}$, where we recall \eqref{eq:crdef} and \eqref{eq:renormdef}.
\begin{theorem}[Credible Ellipsoids]\label{thm:crd}
Grant the conclusions of Lemma~\ref{lem:pivot}. 

Fix $\alpha \in (0, 1)$. Let
  \begin{align}\label{eq:confelp}
    C_N \coloneqq \left\{ x \in \rk :\big(x -  \hat{\Psi}_N \big)^\intercal \variance^{-1}\big(x -  \hat{\Psi}_N \big)\le R_N \right \}, 
  \end{align}
  where
  \begin{enumerate}
    \item $R_N = Q_{\chi^2_k}(1 - \alpha)$ for all $N \in \mathbb{N}$, or \label{en:case1}
    \item $R_N$ is a statistic such that $\Pi_N^\Psi( C_N \mid \mathcal{D}_N) = 1 -\alpha$ for all $N \in \mathbb{N}$. \label{en:case2}
  \end{enumerate}
  Then,
  \begin{align}\label{eq:consstate}
    \Ptheta \lr{\Psi\truth \in C_N} \longrightarrow 1 - \alpha.
  \end{align}
  Furthermore,
  \begin{align}\label{eq:diambound}
    \text{diam}\lr{C_N} = \lr{1 + o_P(1)} \sqrt{4Q_{\chi_k^2}(1 - \alpha) \cdot \frac{\eigmax{i_D^{-1}}}{N}},
  \end{align}
  and $\text{diam}\lr{C_N} \lesssim (1 + o_P(1))/\sqrt{N\eigmin{\fishermatrix}}$.
\end{theorem}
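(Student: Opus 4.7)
The plan is to operate on the high-probability event $\{\variance \in \text{GL}_k(\mathbb{R})\}$ supplied by Lemma~\ref{lem:pivot}, so that $\variance^{-1}$ in \eqref{eq:confelp} is the genuine matrix inverse. On this event $\Psi\truth \in C_N$ if and only if $\qform{\hat{\Psi}_N - \Psi\truth}{\variance^{-1}}{\hat{\Psi}_N - \Psi\truth} \le R_N$, and $C_N$ is a genuine ellipsoid whose longest semi-axis (diagonalizing $\variance$) has length $\sqrt{R_N\cdot \eigmax{\variance}}$, so $\text{diam}(C_N) = 2\sqrt{R_N\cdot \eigmax{\variance}}$.

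For Case~\ref{en:case1} the coverage \eqref{eq:consstate} is immediate: plugging the deterministic $R_N = Q_{\chi_k^2}(1-\alpha)$ into the frequentist pivot \eqref{eq:pivot2} and using continuity of the $\chi_k^2$ CDF gives $\Ptheta(\Psi\truth \in C_N) \longrightarrow 1-\alpha$. For Case~\ref{en:case2} the key intermediate claim is $R_N \plim Q_{\chi_k^2}(1-\alpha)$. Writing $T_N \coloneqq \qform{\Psi\theta_N - \hat{\Psi}_N}{\variance^{-1}}{\Psi\theta_N - \hat{\Psi}_N}$ and $q \coloneqq Q_{\chi_k^2}(1-\alpha)$, the posterior pivot \eqref{eq:pivot1} gives $\Pi_N(T_N \le t \mid \mathcal{D}_N) \plim F_{\chi_k^2}(t)$ at every $t$, since the $\chi_k^2$ CDF is continuous everywhere. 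Strict monotonicity of $F_{\chi_k^2}$ at $q$ then yields, for any $\varepsilon>0$, that the event $\{\Pi_N(T_N \le q-\varepsilon \mid \mathcal{D}_N) < 1-\alpha < \Pi_N(T_N \le q+\varepsilon \mid \mathcal{D}_N)\}$ has $\Ptheta$-probability tending to $1$; on this event $R_N \in [q-\varepsilon,q+\varepsilon]$. Coverage in Case~\ref{en:case2} then follows from \eqref{eq:pivot2} combined with $R_N \plim q$ via a Slutsky argument and continuity of $F_{\chi_k^2}$.

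For the diameter bound \eqref{eq:diambound}, I would leverage the consistency $N\renorm \variance \renorm = I_k + o_P(1)$, which is precisely \eqref{eq:var} rewritten via $\mathbb{V}[Z_N \mid \mathcal{D}_N] = N\renorm \variance \renorm$. In the fixed-dimensional regime this entrywise convergence is equivalent to convergence in operator norm; conjugating back by $i_D^{-1/2}$ produces $\variance = \frac{1}{N}\,i_D^{-1/2}(I_k + o_P(1))\,i_D^{-1/2}$, and dividing through by $\eigmax{i_D^{-1}}$ gives $\eigmax{\variance} = (1+o_P(1))\,\eigmax{i_D^{-1}}/N$. Combining this with $R_N \plim Q_{\chi_k^2}(1-\alpha)$ (trivial in Case~\ref{en:case1}, proved above in Case~\ref{en:case2}) yields the claimed asymptotic form of $\text{diam}(C_N)$.

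The final operator-norm bound reduces to $\eigmax{i_D^{-1}} = \Vert \mathcal{J}_D[\fishermatrix]^{-1}\mathcal{J}_D^\intercal\Vert_{\text{op}} \le \Vert \mathcal{J}_D\Vert_{\text{op}}^2/\eigmin{\fishermatrix}$. The matrix $\mathcal{J}_D\mathcal{J}_D^\intercal$ has entries $\langle \proj \psi_i, \proj \psi_j\rangle_{\h}$ and is dominated entrywise, uniformly in $D$, by the (fixed) Gram matrix of $\bs{\psi}$; hence $\Vert \mathcal{J}_D\Vert_{\text{op}} \lesssim 1$ and $\eigmax{i_D^{-1}} \lesssim 1/\eigmin{\fishermatrix}$, closing the argument. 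The main obstacle I anticipate is the random-quantile step in Case~\ref{en:case2}: extracting convergence in $\Ptheta$-probability of the random $R_N$ from the weak-in-probability posterior pivot. Everything else is matrix perturbation and Slutsky-type bookkeeping.
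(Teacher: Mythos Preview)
Your proposal is correct and follows essentially the same route as the paper's proof. The only cosmetic difference is in Case~\ref{en:case2}: the paper passes to an almost-sure subsequence and then invokes a separate quantile-convergence lemma (its Theorem~\ref{lem:conv}), whereas you argue the bracketing of $R_N$ directly from strict monotonicity of $F_{\chi_k^2}$; these are equivalent standard devices, and the diameter and final eigenvalue bounds match the paper's arguments line for line.
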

\begin{remark}
  We provide some intuition for the proof. Consider Case~\ref{en:case1} of $R_N$. Then, \eqref{eq:consstate} follows directly by \eqref{eq:pivot2}. Consider Case~\ref{en:case2} of $R_N$. Then, $R_N$ is a $(1 - \alpha)$-quantile of the left-hand side of \eqref{eq:pivot1}. By convergence of quantiles, $R_N \plim Q_{\chi_k^2}(1 - \alpha)$. This allows us to transfer the asymptotic properties from Case~\ref{en:case1} to Case~\ref{en:case2}.
\end{remark}
If the posterior $\Pi_N^\Psi(\cdot \mid \mathcal{D}_N)$ were truly Gaussian, then $\partial C_N$ from \eqref{eq:confelp} would represent a level set of its density. However, under a renormalized BvM, the posterior is only approximately Gaussian. We can thus, informally, think of $\partial C_N$ as an approximate level set of the posterior density, that captures its near-Gaussian shape.

When $\Psi$ is a scalar functional, we can identify credible intervals and credible ellipsoids. This yields the following corollary:
\begin{corollary}[Credible Intervals]\label{cor:int}
  Grant the conclusions of Theorem~\ref{thm:crd}. 
  
  Let $k = 1$ and $\alpha \in (0, 1)$. For all $N \in \mathbb{N}$, let $R_N$ be a statistic such that
  \begin{align}\label{eq:interval}
    I_N = \big[\hat{\Psi}_N - R_N, \hat{\Psi}_N + R_N \big]
  \end{align}
  satisfies $\Pi_N^\Psi \lr{ I_N  \mid \mathcal{D}_N} = 1 - \alpha$. Then, $\Ptheta \lr{\Psi\truth \in I_N} \longrightarrow 1 - \alpha$. Furthermore, $\text{diam} \lr{I_N}$ has the properties stated for $\text{diam}\lr{C_N}$ in Theorem~\ref{thm:crd}.
\end{corollary}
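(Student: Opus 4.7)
The plan is to identify the one-dimensional credible interval $I_N$ with a credible ellipsoid $C_N$ of the form treated in Theorem~\ref{thm:crd}, and then inherit both the coverage and the diameter conclusions. The key observation is that when $k = 1$, the scalar posterior variance $\variance$ is a nonnegative real number, and on the event $\{\variance > 0\}$ every ellipsoid
\begin{align*}
\big\{x \in \mathbb{R} : (x - \hat{\Psi}_N)^2 \variance^{-1} \le R\big\} \;=\; \big[\hat{\Psi}_N - \sqrt{R \variance},\; \hat{\Psi}_N + \sqrt{R \variance}\,\big]
\end{align*}
is exactly a symmetric interval centered at $\hat{\Psi}_N$. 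Hence symmetric intervals and ellipsoids parameterise the same family of sets under the change of radius $R \leftrightarrow R_N^2/\variance$.

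Concretely, by Lemma~\ref{lem:pivot} the event $\mathcal{E}_N \coloneqq \{\variance > 0\}$ satisfies $\Ptheta(\mathcal{E}_N) \longrightarrow 1$. Define the statistic $\tilde R_N \coloneqq R_N^2/\variance$ on $\mathcal{E}_N$ and, arbitrarily, $\tilde R_N \coloneqq Q_{\chi^2_1}(1-\alpha)$ on $\mathcal{E}_N^c$; set $\tilde C_N \coloneqq \{x \in \mathbb{R} : (x - \hat{\Psi}_N)^2 \variance^{-1} \le \tilde R_N\}$. On $\mathcal{E}_N$ we have $\tilde C_N = I_N$, so $\Pi_N^\Psi(\tilde C_N \mid \mathcal{D}_N) = 1 - \alpha$ up to a modification on a vanishing-probability set. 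Under this qualification, the argument of Theorem~\ref{thm:crd} (which rests on quantile convergence in \eqref{eq:pivot1} via Lemma~\ref{lem:pivot}) applies verbatim to $\tilde C_N$, yielding both $\Ptheta(\Psi\truth \in \tilde C_N) \longrightarrow 1 - \alpha$ and the diameter expression \eqref{eq:diambound}.

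Transferring back to $I_N$ is then immediate. For coverage, the symmetric difference $\{\Psi\truth \in I_N\} \triangle \{\Psi\truth \in \tilde C_N\}$ is contained in $\mathcal{E}_N^c$, so $\Ptheta(\Psi\truth \in I_N) \longrightarrow 1 - \alpha$. For the diameter, $\text{diam}(I_N) = 2R_N = 2\sqrt{\tilde R_N \variance} = \text{diam}(\tilde C_N)$ on $\mathcal{E}_N$, so both the $(1 + o_P(1))$-expression in \eqref{eq:diambound} and the bound $\text{diam}(\cdot) \lesssim 1/\sqrt{N\eigmin{\fishermatrix}}$ pass to $I_N$. There is no genuinely hard step in this argument; the corollary is essentially a dictionary between one-dimensional symmetric intervals and ellipsoids, and the only minor technicality is the measurable handling of the asymptotically negligible event $\{\variance = 0\}$, which Lemma~\ref{lem:pivot} defuses at the outset.
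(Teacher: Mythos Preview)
Your proposal is correct and follows essentially the same route as the paper: rewrite $I_N$ as the one-dimensional ellipsoid with radius statistic $\tilde R_N = \variance^{-1} R_N^2$ and invoke Case~\ref{en:case2} of Theorem~\ref{thm:crd}. The only difference is cosmetic: the paper's standing convention $A^{-1} \coloneqq I_k$ for non-invertible $A$ makes the identification $I_N = C_N$ hold everywhere, so no separate treatment of the event $\{\variance = 0\}$ is needed.
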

\section{Applications}\label{sec:4}
\subsection{Setting}\label{sec:setopno}
Let $\mathcal{X} \subset \mathbb{R}^d$ be a smooth domain and $\h \coloneqq L^2(\mathcal{X})$. Take $V \coloneqq \mathbb{R}$, corresponding to $\varepsilon_1 \sim \mathcal{N}\lr{0, \sigma_0^2}$ for some fixed $\sigma_0^2 > 0$. Define $\Theta \coloneqq C_{0, \Delta}^\infty(\mathcal{X})$, where 
\begin{align*}
  C_{0, \Delta}^\infty(\mathcal{X}) \coloneqq \left\{ f \in C^\infty(\bar{\mathcal{X}}) : \Delta^n f|_{\partial \mathcal{X}} = 0 \text{ for all  } n \in \mathbb{N}_0 \right\}. 
\end{align*}
Let $H^\beta_0(\mathcal{X})$ denote the Sobolev-Hilbert space with weak derivatives up to order $\beta \in \mathbb{N}$, and vanishing trace on $\partial \mathcal{X}$. Let $\curly{e_i}_{i \in \mathbb{N}}$ be an orthonormal eigenbasis of the negative Laplacian 
\begin{align*}
  -\Delta : H^2_0(\mathcal{X}) \longrightarrow L^2(\mathcal{X})
\end{align*}
with eigenvalues $\curly{\lambda_i}_{i \in \mathbb{N}}$ in ascending order, and set $E_D \coloneqq \text{span}\{e_1, \ldots, e_D \}$. Define the 'spectral' Sobolev space
\begin{align*}
  h^\beta(\mathcal{X}) \coloneqq \curly{x \in L^2(\mathcal{X}) : \Vert x \Vert_{h^\beta} < \infty }, \quad \text{where }\Vert x \Vert_{h^\beta}^2 \coloneqq \sum_{j \in \mathbb{N}} \lvert \langle x, e_j \rangle \rvert^2 \lambda_j^\beta.
\end{align*}
Let $\alpha \in \mathbb{N}$ satisfy $\alpha > 1 + d/2$, and define
 \begin{align}\label{eq:recpi}
  \Pi_N \coloneqq \mathcal{L}\lr{N^{-\frac{d}{4\alpha + 2d}}\sum_{i = 1}^D \lambda_i^{-\frac{\alpha}{2}}  W_i e_i}, \quad W_i \stackrel{\text{iid}}{\sim} \mathcal{N}(0, 1).
 \end{align}
\subsection{Darcy's Problem}\label{sec:darcy}
Recall Darcy's problem from the introduction. Take $d \le 3$ for simplicity. Fix $g\in C^\infty(\partial \mathcal{X})$, and a positive $f \in C^\infty(\bar{\mathcal{X}})$. Let $\forward$ map $\theta \in \Theta$ to the solution of 
\begin{align}\label{eq:darcypde}
  \nabla \cdot \bigpar{e^\theta\nabla u} = f, \quad u|_{\partial \mathcal{X}} = g.
\end{align}
A unique solution $u = u_\theta \in C^\infty(\bar{\mathcal{X}})$ is guaranteed by \cite[Section~A.2]{Nickl2023}. 

A general semi-parametric BvM cannot exist for $\psi_1, \ldots, \psi_k \in C_c^\infty(\mathcal{X})$, see \cite{Nickl2022}. Instead, we obtain the following result:
\begin{theorem}\label{thm:drc}
  Let $\psi_1, \ldots, \psi_k \in C_{0, \Delta}^\infty(\mathcal{X})$ and $\alpha \ge 14$. Assume $N^l \lesssim D \lesssim N^u$, where 
  \begin{align}\label{eq:boundonu}
    0 < l < u < \frac{d^2}{(4\alpha + 2d)(\alpha + 6)}.
  \end{align}
  Then, the conclusions of Theorem~\ref{thm:rnm}, Theorem~\ref{thm:crd} and Corollary~\ref{cor:int} hold. In addition, the diameters of \eqref{eq:confelp} and \eqref{eq:interval} are smaller than $C (1 + o_P(1))D^{\frac{3}{d}} / \sqrt{N}$ for some $C > 0$.
\end{theorem}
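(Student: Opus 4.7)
The strategy is a direct verification of Hypotheses~\ref{cond:posteriorcontraction}--\ref{cond:inv} and the technical assumptions of Lemma~\ref{lem:asymptnorm}, Lemma~\ref{lem:empiricalprocess}, Lemma~\ref{lem:lla}, and Lemma~\ref{lem:prjbvm}, after which Theorem~\ref{thm:rnm}, Theorem~\ref{thm:crd} and Corollary~\ref{cor:int} apply verbatim. The plan is to let $\mathcal{R} \coloneqq h^{\alpha}(\mathcal{X})$ (or a suitable Sobolev space of intermediate regularity), so that the prior \eqref{eq:recpi} gives a Gaussian whose RKHS norm scales like the $h^{\alpha}$-norm up to the $N$-dependent rescaling. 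This fixes natural rates $\delta_N^{\forward} = N^{-\alpha/(2\alpha+d)}$ and $\delta_N^{\h} = N^{-(\alpha-1)/(2\alpha+d)}$ (possibly up to logarithmic factors), coming from the standard elliptic regularity of $\theta \mapsto u_\theta$: the forward map is Lipschitz from $H^\beta$ into $L^2$ for $\beta > d/2$, and injective with Lipschitz inverse on an $L^\infty$-bounded set after taking one derivative. The constraint $\alpha \ge 14$ is used to ensure enough Sobolev regularity for simultaneous control of stability and of the metric entropies below.

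Next, I would verify Hypothesis~\ref{cond:posteriorcontraction} for Darcy's problem. This follows the template of \cite[Sections~2.2 and 5.1]{Nickl2023}: construct Hellinger-type testing sets using regression testing, bound the prior mass $\Pi_N(\mathcal{B}_N)$ from below using the concentration of the rescaled Gaussian prior in \eqref{eq:recpi}, and combine with the standard stability estimate for the inverse map in $L^2$, which on $\mathcal{R}$-bounded sets reads $\Vert \theta - \theta_0 \Vert_{L^2} \lesssim \Vert \mathcal{G}_\theta - \mathcal{G}_{\theta_0} \Vert_{L^2}^{s}$ for some $s > 0$, upgraded via Sobolev interpolation on $\mathcal{R}$-balls. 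Hypothesis~\ref{cond:2} is immediate from the classical derivative formula $\mathbb{I}_{\theta_0}[h] = v_h$, where $v_h$ solves the linearised Darcy PDE $\nabla \cdot (e^{\theta_0}\nabla v_h) = -\nabla \cdot (e^{\theta_0} h \nabla u_{\theta_0})$, with $v_h|_{\partial \mathcal{X}} = 0$, and its boundedness $\h \longrightarrow L^2$ from elliptic regularity.

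For Hypothesis~\ref{cond:4}, choose $d_\Theta$ to be the $L^\infty$-metric. Lipschitz bounds on $\theta \mapsto \mathcal{G}_\theta$ in $L^\infty$ on $\mathcal{R}$-balls, and a bound of the form $\Vert R_\theta - R_{\theta'}\Vert_{L^\infty} \lesssim (\Vert \theta - \theta_0\Vert + \Vert \theta' - \theta_0\Vert)\Vert \theta - \theta'\Vert_{L^\infty}$, come from differentiating Darcy's PDE twice and using Schauder estimates. The entropy integral $J_N$ is controlled via the Sobolev embedding $h^\alpha \hookrightarrow L^\infty$, yielding a polynomial bound $\log N(\Theta_{N,M}, L^\infty; \varepsilon) \lesssim \varepsilon^{-d/\alpha}$. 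For Hypothesis~\ref{cond:inv}, the key input is that $\mathbb{I}_{\theta_0}$ is injective and smoothing of order two, so its restriction to the finite-dimensional $E_D$ is automatically invertible, with a quantitative lower bound $\eigmin{\fishermatrix} \gtrsim \lambda_D^{-c}$ for a small $c > 0$. This uses $\theta_0 \in C_{0,\Delta}^\infty(\mathcal{X})$ so that $\nabla u_{\theta_0}$ is smooth and non-degenerate on supports of the $e_i$.

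Finally, I would combine these ingredients and feed them into the chain of assumptions. The bound on $u$ in \eqref{eq:boundonu} is precisely what is needed to force simultaneously: (i) $\sqrt{s_N}\eigmin{\fishermatrix} \to \infty$ (giving Assumption~1 of Lemma~\ref{lem:empiricalprocess} via Remark~\ref{rem:lip}); (ii) $\Vert \mathbb{I}_{\theta_0}\perturb\Vert_{L^\infty} \lesssim N^r$, $r < 1/2$, in Lemma~\ref{lem:asymptnorm}; (iii) the bias-type conditions $\sqrt{N}\Vert \theta_0 - P_{E_D}\theta_0\Vert_\h \to 0$ and $\sqrt{N}\delta_N^\h \Vert \bs{\psi} - P_{E_D}\bs{\psi}\Vert_\h \to 0$ in Lemma~\ref{lem:lla}; (iv) the entropy/linearisation budgets 3--6 of Lemma~\ref{lem:empiricalprocess}; and (v) the conditions in Lemma~\ref{lem:prjbvm}, noting that the moment factor $\mathbb{E}^\Pi \Vert \theta_N\Vert_\h^4$ is polynomially bounded by the rescaling in \eqref{eq:recpi}. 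The diameter estimate in the last sentence of the theorem follows from the $1/\sqrt{N \eigmin{\fishermatrix}}$ bound in Theorem~\ref{thm:crd}, together with the eigenvalue lower bound $\eigmin{\fishermatrix} \gtrsim D^{-6/d}$ arising from the two-derivative smoothing of $\mathbb{I}_{\theta_0}$ combined with Weyl-type bounds $\lambda_D \asymp D^{2/d}$. The principal obstacle is the joint tuning: balancing the posterior contraction rate against the entropy integrals and the bias-type conditions while still leaving room for a nontrivial positive range of $D = N^u$; this is exactly where the particular numerical constant $d^2/((4\alpha+2d)(\alpha+6))$ in \eqref{eq:boundonu} is forced, and where the requirement $\alpha \ge 14$ is consumed.
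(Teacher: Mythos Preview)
Your plan is essentially the paper's proof: verify Hypotheses~\ref{cond:posteriorcontraction}--\ref{cond:inv} and then march through the assumptions of Lemmas~\ref{lem:asymptnorm}, \ref{lem:empiricalprocess}, \ref{lem:lla}, \ref{lem:prjbvm}, with the diameter bound coming from $\eigmin{\fishermatrix}\gtrsim D^{-6/d}$ exactly as you say. The only noteworthy deviation is that the paper takes $d_\Theta = \Vert\cdot\Vert_{h^1}$ rather than $L^\infty$ (pairing it with the estimate $\Vert\forward_\theta-\forward_{\theta'}\Vert_{L^\infty}\lesssim\Vert\theta-\theta'\Vert_{h^1}$ and the entropy of $h^\alpha$-balls in $h^1$, giving $J_N(t)=t^{1-d/(2\alpha-2)}$, $g_N=r_N=1$), and uses the slightly different rate $\delta_N^{\h}=(\delta_N^{\forward})^{(\alpha-1)/(\alpha+1)}$ coming from the stability exponent in Lemma~\ref{lem:stabi}; your $L^\infty$ choice would also go through but the paper's $h^1$ route makes the bookkeeping for Assumptions~3--6 of Lemma~\ref{lem:empiricalprocess} particularly clean.
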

\begin{remark}
  Assume instead $\theta_0 \in \Theta \coloneqq h^{\gamma_1}(\mathcal{X})$ and $\psi_1, \ldots, \psi_k \in h^{\gamma_2}(\mathcal{X})$, where $\gamma_1, \gamma_2 > 0$. Take also $\gamma_1$ large enough that $\mathcal{G}$ retains the estimates from \cite[Section~4.0.1]{Supplement}.
  
  To confirm the conditions from Section~\ref{sec:invprob}, we only need to reevaluate Assumption 2-3 of Lemma~\ref{lem:lla}. By \cite[Lemma~4.5]{Supplement}, we can derive a sufficient condition of the form $l > l^*(\alpha, d, \gamma_1, \gamma_2)$, where $l^*(\alpha, d, \gamma_1, \gamma_2) \longrightarrow 0$ for $\gamma_1, \gamma_2 \longrightarrow \infty$. 
\end{remark}
\subsection{The Schrödinger Problem}\label{sec:schr}
Fix a positive $g \in C^\infty(\partial \mathcal{X})$. Let $\mathcal{G}$ map $\theta \in \Theta$ to the solution of
\begin{align}\label{eq:schrodinger}
  -\frac{1}{2}\Delta u + e^\theta u = 0, \quad u|_{\partial \mathcal{X}} = g.
\end{align}
A unique solution $u = u_\theta \in C^\infty(\bar{\mathcal{X}})$ is guaranteed by \cite[Section~A.2]{Nickl2023}. We consider $\psi_1, \ldots, \psi_k \in C_c^\infty(\mathcal{X})$. A semi-parametric BvM was proved in similar settings, see \cite{Nickl_2020,Nickl2023}.

 We can verify the conditions for our renormalized BvM by constraining $\alpha$ and $D$. This proceeds similarly to Darcy's problem, using the results from \cite{Nickl2023}. We recover a semi-parametric BvM, and a $\sqrt{N}$-efficient diameter for our credible ellipsoid:
\begin{theorem}
Assumptions 1-3 of Theorem~\ref{thm:recov} hold. Furthermore, the diameters of \eqref{eq:confelp} and \eqref{eq:interval} take the form $C(1 + o_P(1))/\sqrt{N}$, where $C > 0$. 
\end{theorem}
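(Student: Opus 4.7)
My plan is to proceed in two stages: first, verify the hypotheses of Theorem~\ref{thm:rnm} along the lines of the Darcy argument for Theorem~\ref{thm:drc}; second, verify Assumptions~1--3 of Theorem~\ref{thm:recov} and read off the $1/\sqrt{N}$ diameter from Theorem~\ref{thm:crd}. The main obstacle is the first stage: juggling the regularity $\alpha$, the dimension $D=D(N)$, the contraction rates, and the entropy integral to satisfy simultaneously all the quantitative assumptions of Lemmas~\ref{lem:empiricalprocess}--\ref{lem:prjbvm}. The Schr\"odinger setting is easier than Darcy in one crucial respect: the information operator $\mathcal{I}$ is boundedly invertible on $L^2(\mathcal{X})$, so several bounds that are dimension-dependent in the Darcy case will become uniform in $D$ here.

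For the first stage, the forward map $\theta \mapsto u_\theta$ for \eqref{eq:schrodinger} is smooth and Lipschitz between the relevant Sobolev spaces, $u_{\truth}$ is bounded below by a positive constant via the elliptic maximum principle and positivity of $g$, and the linearization takes the form $\lin[h] = -L_{\truth}^{-1}(e^{\truth}u_{\truth}h)$ with $L_{\truth} \coloneqq -\tfrac{1}{2}\Delta + e^{\truth}$. Importing the PDE and stability estimates of \cite{Nickl_2020,Nickl2023} in place of the Darcy estimates used in \cite[Section~4]{Supplement}, and choosing $\alpha$ sufficiently large and $D \in [N^l, N^u]$ in a polynomial range tailored to those estimates, each of Hypotheses~\ref{cond:posteriorcontraction}--\ref{cond:inv} and the numerical assumptions in Lemmas~\ref{lem:asymptnorm}--\ref{lem:prjbvm} will be met, yielding the conclusions of Theorem~\ref{thm:rnm} and the consistency \eqref{eq:var}. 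I expect the constraint analogous to \eqref{eq:boundonu} to be substantially more permissive here because $\eigmin{\fishermatrix}^{-1}$ enters only as an $O(1)$ factor.

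For the second stage, bounded invertibility of $\mathcal{I}$ on $L^2(\mathcal{X})$ (which follows from the factorization via $L_{\truth}^{-1}$ and the positive pointwise lower bound on $e^{\truth}u_{\truth}$) immediately gives Assumption~1 with $\bs{\phi} \coloneqq \mathcal{I}^{-1}\bs{\psi}$, and also Assumption~3, since $\lin\phi=0$ forces $\mathcal{I}\phi=\lin^*\lin\phi=0$ and hence $\phi=0$. For Assumption~2, $\eigmin{\fishermatrix}$ is bounded below uniformly in $D$ by bounded invertibility of $\mathcal{I}$, while $\norm{\bs{\phi} - \proj\bs{\phi}}{\h}$ decays polynomially in $D$ using $\bs{\phi} \in h^s(\mathcal{X})$ for $s$ as large as desired---regularity inherited from $\bs{\psi} \in C_c^\infty$ by the fact that $\mathcal{I}^{-1}$ acts between Sobolev scales. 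Theorem~\ref{thm:recov} then gives $i_D^{-1} \longrightarrow L$ with $L_{ij} = \ip{\phi_i}{\psi_j}{\h}$ positive definite, so $\eigmax{i_D^{-1}}$ is $O(1)$; substituting into \eqref{eq:diambound} and Corollary~\ref{cor:int} delivers the claimed $C(1+o_P(1))/\sqrt{N}$ diameter with $C = 2\sqrt{Q_{\chi_k^2}(1-\alpha)\,\eigmax{L}}$.
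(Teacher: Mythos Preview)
Your overall two-stage structure is sound, and your Stage~1 is actually more explicit than the paper, which simply asserts that the renormalized BvM hypotheses are checked ``similarly to Darcy's problem'' and moves on. The substantive error is in Stage~2: you claim that $\mathcal{I}$ is boundedly invertible on $L^2(\mathcal{X})$. It is not. Writing $\lin = L_{\truth}^{-1}M$ with $L_{\truth} = -\tfrac{1}{2}\Delta + e^{\truth}$, the solution operator $L_{\truth}^{-1}$ is smoothing of order two and hence compact on $L^2(\mathcal{X})$; consequently $\mathcal{I} = M(L_{\truth}^{-1})^2 M$ is compact and its inverse is unbounded. This invalidates both your justification of Assumption~1 and your claim that $\eigmin{\fishermatrix}$ is bounded below uniformly in $D$; it also undermines your remark in Stage~1 that ``$\eigmin{\fishermatrix}^{-1}$ enters only as an $O(1)$ factor''.

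The paper's route avoids this by working not on $L^2$ but on $C_c^\infty(\mathcal{X})$: it shows $\mathcal{I}\colon C_c^\infty(\mathcal{X}) \to C_c^\infty(\mathcal{X})$ is a bijection, since both $M$ (multiplication by the strictly positive $e^{\truth}u_{\truth}$, via Feynman--Kac) and $L_{\truth}^{-1}$ preserve $C_c^\infty$ bijectively by elliptic regularity. This yields $\phi_i \in C_c^\infty(\mathcal{X}) \subset C_{0,\Delta}^\infty(\mathcal{X})$, so $\norm{\phi_i - P_{E_D}\phi_i}{\h}$ decays faster than any power of $D$. For Assumption~2 the paper uses only the polynomial stability $\eigmin{\fishermatrix} \gtrsim D^{-4/d}$, derived from $\norm{\lin h}{L^2} \gtrsim \norm{h}{(H^2_0)^*}$ via \cite[Lemma~4.9]{Nickl2022}; combined with the super-polynomial projection decay this is enough. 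Your argument for Assumption~3 (injectivity of $\lin$) is fine and close to the paper's. Once you correct the invertibility claim and accept the $D^{-4/d}$ lower bound, your Stage~1 constraints become a genuine (though milder than Darcy) restriction on $\alpha$ and the window $[l,u]$, and your diameter conclusion via $i_D^{-1}\to L$ and \eqref{eq:diambound} goes through exactly as you wrote.
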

\section{Discussion}
We finish by noting down some desirable properties for satisfying our conditions from Section~\ref{sec:invprob}:
\begin{enumerate}
  \item Posterior contraction rates of the form $N^{-r}$, where $r \approx 1/2$.
  \item A linearization error, $\norm{R_\theta}{L^2} \lesssim \norm{\theta - \theta_0}{\h}^p$ for some $p > 1$.
  \item A stability estimate, $\eigmin{\fishermatrix} \gtrsim D^{-\kappa}$ for some $\kappa > 0$. \label{en:prop}
  \item A projection error, $\norm{\theta_0 - P_{E_D}\theta_0}{\h} + \norm{\bs{\psi} - P_{E_D}\bs{\psi}}{\h} \lesssim D^{-r}$ for some $r > 0$.
\end{enumerate}

The diameter of the credible ellipsoid from Theorem~\ref{thm:crd} may be far from $\sqrt{N}$-efficient. However, if the above stability estimate holds, then the diameter is $O_P(N^{-\frac{1}{2} + \varepsilon})$ for $\varepsilon > 0$, when $D \lesssim N^{\frac{\varepsilon}{2\kappa}}$. This follows by the last bound provided in Theorem~\ref{thm:crd}.
\begin{acks}[Acknowledgments]
  The author thanks Professor Richard Nickl for discussions that have significantly shaped this research. In particular, he suggested that a renormalized Bernstein-von-Mises Theorem may hold for a scalar linear functional.
\end{acks}
\begin{funding}
  The author was supported by an ERC Advanced Grant (UKRI G116786).
  \end{funding}
\begin{supplement}
  \stitle{Proofs and Justifications for Remarks}
  \sdescription{We organize the proofs by the corresponding sections of the main paper. In addition, we include a last section entitled 'General Results' for external results.}
  \end{supplement}
  \section*{Proofs for Section~\ref{subsec:notation}}
  \begin{lemma}\label{lem:sq}
    $\Epost{\norm{\theta_N}{\h}^2} < \infty$.
  \end{lemma}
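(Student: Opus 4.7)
The plan is to exploit that $E_D$ is finite-dimensional and that the log-likelihood is non-positive, so the result is essentially immediate from properties of the Gaussian prior. There is no significant obstacle here; the lemma is a sanity check that all subsequent posterior expectations and variances are well-defined.

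Concretely, I would proceed as follows. First, since $E_D$ has finite dimension $D$, the two inner products $\langle \cdot, \cdot \rangle_{\pspace}$ and $\langle \cdot, \cdot \rangle_{\h}$ induce equivalent norms on $E_D$, so there is a constant $C_D < \infty$ with $\Vert \theta \Vert_{\h} \le C_D \Vert \theta \Vert_{\pspace}$ for all $\theta \in E_D$. Because $\Pi_N$ is centered Gaussian on $(E_D, \langle \cdot, \cdot \rangle_{\pspace})$ with identity covariance, the expansion of $\theta_N$ in any $\langle \cdot, \cdot \rangle_{\pspace}$-orthonormal basis of $E_D$ has i.i.d.\ $\mathcal{N}(0,1)$ coefficients, giving $\mathbb{E}^\Pi \Vert \theta_N \Vert_{\pspace}^2 = D$ and hence $\mathbb{E}^\Pi \Vert \theta_N \Vert_{\h}^2 \le C_D^2 D < \infty$.

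Next, I would observe that $\ell_N(\theta) = -\frac{1}{2} \sum_{i=1}^N \Vert Y_i - \forward_\theta(X_i) \Vert_V^2 \le 0$, so $e^{\ell_N(\theta)} \le 1$ pointwise. Furthermore, the normalizing constant
\begin{align*}
  Z_N \coloneqq \int_{E_D} e^{\ell_N(\theta)} \, d\Pi_N(\theta)
\end{align*}
is strictly positive for every realization of $\mathcal{D}_N$, since $e^{\ell_N}$ is everywhere strictly positive and $\Pi_N$ is a probability measure on $E_D$.

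Finally, combining these observations via Bayes' formula \eqref{eq:oripos},
\begin{align*}
  \Epost{\Vert \theta_N \Vert_{\h}^2} = \frac{1}{Z_N} \int_{E_D} \Vert \theta \Vert_{\h}^2 e^{\ell_N(\theta)} \, d\Pi_N(\theta) \le \frac{1}{Z_N} \mathbb{E}^\Pi \Vert \theta_N \Vert_{\h}^2 \le \frac{C_D^2 D}{Z_N} < \infty,
\end{align*}
which proves the claim. The argument also shows that any polynomial moment $\Epost{\Vert \theta_N \Vert_{\h}^p}$ is finite for the same reason, which is presumably how the companion lemma justifying $\hat{\Psi}_N$ and $\variance$ would proceed.
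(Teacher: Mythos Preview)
Your proof is correct and follows essentially the same route as the paper: both use $\ell_N \le 0$ to bound the posterior integrand by the prior integrand, then the equivalence of norms on the finite-dimensional space $E_D$ to reduce to a Gaussian second moment in the $\pspace$-norm. The only cosmetic difference is that you compute $\mathbb{E}^\Pi \Vert \theta_N \Vert_{\pspace}^2 = D$ directly from the identity covariance, whereas the paper appeals to Fernique's theorem; your version is arguably more elementary here.
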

  \begin{proof}
    The definition \eqref{eq:oripos} states 
  \begin{align*}
    \frac{d\Pi_N(\cdot \mid \mathcal{D}_N)}{d\Pi_N} = \frac{e^{\ell_N}}{\int_{E_D} e^{\ell_N} \, d\Pi_N} .
  \end{align*}
  Thus,
    \begin{align*}
      \Epost{\norm{\theta_N}{\h}^2} =  \int_{E_D} \norm{\theta}{\h}^2 \, d\Pi_N(\theta \mid \mathcal{D}_N) = \frac{\int_{E_D} \norm{\theta}{\h}^2 e^{\ell_N(\theta)} \, d\Pi_N(\theta)}{\int_{E_D} e^{\ell_N(\theta)} \, d\Pi_N(\theta)}.
    \end{align*}
    We show that the numerator is finite. Note $\ell_N \le 0$, so $e^{\ell_N} \le 1$. Hence,
    \begin{align*}
      \int_{E_D} \norm{\theta}{\h}^2 e^{\ell_N(\theta)} \, d\Pi_N(\theta) \le \int_{E_D} \norm{\theta}{\h}^2  \, d\Pi_N(\theta).
    \end{align*}
    Since $E_D$ is finite-dimensional, $\norm{\cdot}{\h} \le C \norm{\cdot}{\mathcal{H}_N}$, where $C = C(N)$ is a positive constant. Coupling this with the previous display, we have
    \begin{align*}
      \int_{E_D} \norm{\theta}{\h}^2 e^{\ell_N(\theta)} \, d\Pi_N(\theta) \le C \int_{E_D} \norm{\theta}{\mathcal{H}_N}^2  \, d\Pi_N(\theta) < \infty,
    \end{align*}
    by Fernique's theorem \cite[Corollary~2.8.6]{Bogachev1998-aw}.
  \end{proof}
\section*{Proofs for Section~\ref{sec:hyp}}
\begin{proof}[Justification for Remark~\ref{rem:submodel}]
  We provide two, somewhat informal, justifications. The first is inspired by semi-parametric theory, and the second by classical parametric theory.
  \\\\
  \textit{Justification 1}: Consider the model $\theta_0 + E_D$. Here, the $L^2$-linearization of $\mathcal{G}$ at $\theta_0$ is $\lin \iota_{E_D}$. Thus, 
  \begin{align*}
    \lr{\lin \iota_{E_D}}^*\lr{\lin \iota_{E_D}} = \iota_{E_D}^* \lin^* \lin \iota_{E_D}.
  \end{align*}
  represents the information operator at $\theta_0$, as in \cite{Nickl2023}. Now, $\lin^* \lin = \mathcal{I}$, and $\iota_{E_D}^* = P_{E_D}$ by \eqref{eq:pswap}. Thus, $\mathcal{I}_D = P_{E_D}\mathcal{I}\iota_{E_D}$ represents the information operator at $\theta_0$, and $[\mathcal{I}_D]$ is the corresponding 'information matrix' with respect to the basis $\curly{e_1, \ldots, e_D}$.
  \\\\
  \textit{Justification 2}: For simplicity, let $(V, \ip{\cdot}{\cdot}{V}) \coloneqq (\mathbb{R}, *)$, and assume that $\partial \forward_{\truth + t e_i}(x) / \partial t \Big|_{t = 0}$ exists for all $x \in \mathcal{X}$ and $i \le k$. 
  
  Let $S_D(\theta_0)$ denote the score function at $\theta_0$ in the model $\theta_0 + E_D$ with $N = 1$:
\begin{align*}
  S_D^i(\theta_0) \coloneqq \frac{\partial \ell_1(\theta_0 + te_i)}{\partial t}\Big|_{t = 0} , \quad i \le D.
\end{align*}
Recalling $\ell_1(\theta) =  -\frac{1}{2} \lr{Y_1 - \forward_\theta(X_1)}^2$ for $\theta \in \Theta$, we have
\begin{align}\label{eq:recscore}
  S_D^i(\theta_0) = \varepsilon_1 \frac{\partial \forward_{\truth + te_i}(X_1)}{\partial t} \Big|_{t = 0}.
\end{align}
Let $I_D(\truth)$ denote the information matrix at $\theta_0$ in the model $\theta_0 + E_D$:
\begin{align}\label{eq:infdef1}
 I_D^{ij}(\theta_0) \coloneqq \Etheta \sq{S_D^i(\theta_0)S_D^j(\theta_0)}, \quad i,j\le D.
\end{align}
We show $I_D(\theta_0) = [\fishermatrix]$.

Substitute \eqref{eq:recscore} into \eqref{eq:infdef1}. Use $\Etheta \varepsilon_1^2 = 1$, and that $X_1$ is independent of $\varepsilon_1$. Then,
\begin{align*}
  I_D^{ij}(\theta_0) &=   \Etheta \left[ \frac{\partial \forward_{\truth + te_i}(X_1)}{\partial t}  \frac{\partial \forward_{\truth + te_j}(X_1)}{\partial t} \Big|_{t = 0}   \right]\Etheta \varepsilon_1^2, \\  &= \Etheta \left[ \frac{\partial \forward_{\truth + te_i}(X_1)}{\partial t}  \frac{\partial \forward_{\truth + te_j}(X_1)}{\partial t} \Big|_{t = 0}   \right].
\end{align*}
We now show that $[\fishermatrix]_{ij}$ equals the latter quantity.

Recall \eqref{eq:finiteFisher1} and $\mathcal{I}_D = P_{E_D} \fisher \iota_{E_D}$. Then,
\begin{align}\label{eq:wproj}
  [\fishermatrix]_{ij} = \ip{P_{E_D}\mathcal{I}\iota_{E_D}e_i}{e_j}{\h} = \ip{P_{E_D}\mathcal{I}e_i}{e_j}{\h}.
\end{align}
By \cite[Remark 26.15(i)]{Schilling2017-ex}, we have
\begin{align}\label{eq:pswap}
  \ip{x}{y}{\h} = \ip{P_{E_D}x}{y}{\h}, \quad \text{for all }x \in \h \text{ and }y \in E_D.
\end{align}
Take $x \coloneqq \mathcal{I}e_i$ and $y \coloneqq e_j$. Then, \eqref{eq:wproj} reveals
\begin{align}\label{eq:reveals}
  [\fishermatrix]_{ij} =  \ip{\mathcal{I}e_i}{e_j}{\h}.
\end{align}
Using $\mathcal{I} = \lin^* \lin$, and subsequently $X_1 \sim \lambda_\mathcal{X}$, we have
  \begin{align}\label{eq:finiteFisher2}
    [\fishermatrix]_{ij}  = \ip{\lin e_i}{\lin  e_j}{L^2} = \Etheta \sq{ \lin e_i(X_1) \lin  e_j(X_1) }.
  \end{align}
  Recall Hypothesis~\ref{cond:2}. Then, as $t \longrightarrow 0$, 
  \begin{align*}
    \lr{\forward_{\truth + t e_i} - \forward_{\truth}}/t &\longrightarrow \lin e_i , \quad &\text{in $L^2(\lambda_\mathcal{X})$},\\
    \lr{\forward_{\truth + t e_i}(x) - \forward_{\truth}(x)}/t &\longrightarrow \partial \forward_{\truth + t e_i}(x) / \partial t \Big|_{t = 0}, \quad &\text{for all $x \in \mathcal{X}$}.
  \end{align*}
  We identify the above limits using \cite[Corollary~13.8]{Schilling2017-ex}:
  \begin{align}\label{eq:derop}
    \frac{\partial \forward_{\truth + t e_i}(x)}{\partial t} \Big|_{t = 0}  = \lin e_i(x), \quad \text{for $\lambda_\mathcal{X}$ almost all $x$}.
  \end{align}
  Plugging \eqref{eq:derop} into \eqref{eq:finiteFisher2}, we have
\begin{align}\label{eq:finf}
  [\fishermatrix]_{ij}   = \Etheta \left[ \frac{\partial \forward_{\truth + te_i}(X_1)}{\partial t}  \frac{\partial \forward_{\truth + te_j}(X_1)}{\partial t} \Big|_{t = 0}   \right].
\end{align}
\end{proof}
\section*{Proofs for Section~\ref{sec:ren}}
\begin{lemma}\label{lem:boundinv}
  \hspace*{0.5cm}
  \begin{enumerate}
    \item $\mathcal{J}_D [\fishermatrix]^{-1} \mathcal{J}_D^\intercal \succ 0$ for $D$ large enough,
    \item $\norm{\renorm}{\text{op}} \lesssim 1$.
  \end{enumerate}
\end{lemma}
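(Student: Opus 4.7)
The plan is to reduce both parts to analyzing the quadratic form
\begin{align*}
c \longmapsto c^\intercal \mathcal{J}_D [\fishermatrix]^{-1} \mathcal{J}_D^\intercal c, \quad c \in \rk,
\end{align*}
via one central identification. By the definition of $\mathcal{J}_D$, the vector $\mathcal{J}_D^\intercal c$ is exactly the coordinate representation, in the orthonormal basis $\{e_1, \ldots, e_D\}$, of $u_c \coloneqq \sum_{j = 1}^k c_j P_{E_D}\psi_j \in E_D$. Since $[\fishermatrix]^{-1}$ represents $\fishermatrix^{-1}$ in the same basis, this gives
\begin{align*}
c^\intercal \mathcal{J}_D [\fishermatrix]^{-1} \mathcal{J}_D^\intercal c = \ip{u_c}{\fishermatrix^{-1} u_c}{\h}.
\end{align*}

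For part~1, I would first note that $\ip{\fishermatrix x}{x}{\h} = \norm{\lin x}{L^2}^2 \ge 0$ on $E_D$, so $\fishermatrix$ is self-adjoint and positive semi-definite, and Hypothesis~\ref{cond:inv} upgrades this to positive definiteness; the same then holds for $\fishermatrix^{-1}$. Hence the quadratic form vanishes iff $u_c = 0$. As $\psi_1, \ldots, \psi_k$ are linearly independent in $\h$, their Gram matrix $G$ is strictly positive definite; since $P_{E_D}\psi_j \to \psi_j$ in $\h$ as $D \to \infty$, the Gram matrix $G_D \coloneqq (\ip{P_{E_D}\psi_i}{P_{E_D}\psi_j}{\h})_{i,j \le k}$ converges entrywise to $G$, and by continuity of $\eigmin{\cdot}$ it is positive definite for $D$ large enough. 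Therefore $u_c = 0$ forces $c = 0$, proving part~1.

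For part~2, since $\renorm = i_D^{1/2}$ is symmetric positive semi-definite, $\norm{\renorm}{\text{op}}^2 = \eigmax{i_D} = 1/\eigmin{\mathcal{J}_D [\fishermatrix]^{-1} \mathcal{J}_D^\intercal}$, so it suffices to bound this minimum eigenvalue below uniformly in $D$. From the identification,
\begin{align*}
c^\intercal \mathcal{J}_D [\fishermatrix]^{-1} \mathcal{J}_D^\intercal c \ge \frac{\norm{u_c}{\h}^2}{\eigmax{\fishermatrix}} = \frac{c^\intercal G_D c}{\eigmax{\fishermatrix}}.
\end{align*}
The denominator is uniformly controlled: $\eigmax{\fishermatrix} = \sup_{x \in E_D,\, \norm{x}{\h} = 1}\norm{\lin x}{L^2}^2 \le \norm{\lin}{\text{op}}^2 < \infty$, independently of $D$, by the boundedness of $\lin$ from Hypothesis~\ref{cond:2}. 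Combined with $\eigmin{G_D} \to \eigmin{G} > 0$ from part~1, this delivers a uniform lower bound and hence $\norm{\renorm}{\text{op}} \lesssim 1$.

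The only non-trivial step is the coordinate identification at the outset; once that is in place, both claims reduce to elementary eigenvalue inequalities plus the Hilbert-space linear independence of $\{\psi_j\}$ and the boundedness of $\lin$. I do not anticipate a serious obstacle.
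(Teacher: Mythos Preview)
Your proposal is correct and follows essentially the same approach as the paper. Both proofs hinge on (i) positive definiteness of $\fishermatrix^{-1}$, (ii) the convergence of the Gram-type matrix $G_D = \mathcal{J}_D\mathcal{J}_D^\intercal$ to the positive definite Gram matrix of $\psi_1,\ldots,\psi_k$, and (iii) the bound $\eigmax{\fishermatrix} \le \norm{\lin}{\text{op}}^2$; your presentation is somewhat more streamlined in that you front-load the coordinate identification $c^\intercal \mathcal{J}_D[\fishermatrix]^{-1}\mathcal{J}_D^\intercal c = \ip{u_c}{\fishermatrix^{-1}u_c}{\h}$, whereas the paper arrives at the same inequalities via an explicit $I\cdot II$ factorization in matrix coordinates.
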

\begin{proof}
  \hspace*{0.5cm}\\\\
  \textit{Claim~1}: The first equality of \eqref{eq:finiteFisher2} reveals $[\fishermatrix] \succeq 0$. Indeed, letting $x \in \mathbb{R}^D$, 
  \begin{align*}
    x^\intercal [\fishermatrix] x = \sum_{i = 1}^D\sum_{j = 1}^D x_i\ip{\lin e_i}{\lin  e_j}{L^2}x_j = \norm{\sum_{i = 1}^D x_i \lin e_i}{L^2}^2 \ge 0.
  \end{align*}  
  Recalling Hypothesis~\ref{cond:inv}, we now have $[\fishermatrix]^{-1} \succ 0$. Thus,
  \begin{align*}
    \mathcal{J}_D [\fishermatrix]^{-1} \mathcal{J}_D^\intercal \succ 0 \Longleftrightarrow \text{Rank}\lr{\mathcal{J}_D^\intercal} = k \Longleftrightarrow \text{Rank}\lr{\mathcal{J}_D\mathcal{J}_D^\intercal} = k \Longleftrightarrow \mathcal{J}_D\mathcal{J}_D^\intercal \in \text{GL}_k(\mathbb{R}).
  \end{align*}
  We refer the reader to \cite[Observation~7.1.8(b)]{Horn_Johnson_2012}, \cite[Section~0.4.6(d)]{Horn_Johnson_2012}, and \cite[Section~0.5(b-c)]{Horn_Johnson_2012} for these equivalences. It now suffices to show $\mathcal{J}_D\mathcal{J}_D^\intercal \in \text{GL}_k(\mathbb{R})$ for $D$ large enough.

  Let $n, m \le k$. Then, 
  \begin{align}\label{eq:feqjf}
    \lr{\mathcal{J}_D \mathcal{J}_D^\intercal}_{nm} = \bs{e}_n\mathcal{J}_D \mathcal{J}_D^\intercal \bs{e}_m = \lr{\mathcal{J}_D^\intercal \bs{e}_n}^\intercal \mathcal{J}_D^\intercal \bs{e}_m.
  \end{align}
  By definition, and subsequently using \eqref{eq:pswap}, we have
  \begin{align}\label{eq:repvec}
    \mathcal{J}_D^\intercal \bs{e}_n = \lr{\ip{\psi_n}{e_i}{\h}}_{i \le D} = \lr{\ip{P_{E_D} \psi_n}{e_i}{\h}}_{i \le D} \eqqcolon [P_{E_D}\psi_n].
  \end{align}
  Note that $[P_{E_D}\psi_n]$ represents $P_{E_D} \psi_n$ in the $\curly{e_1, \ldots, e_D}$ basis. Now, \eqref{eq:feqjf} reveals
  \begin{align}\label{eq:bthus}
    \lr{\mathcal{J}_D \mathcal{J}_D^\intercal}_{nm} = [P_{E_D}\psi_n]^\intercal [P_{E_D}\psi_m].
  \end{align}

  Recall that $\curly{e_1, \ldots, e_D}$ is an orthonormal basis for $E_D$. Hence, for all $x, y \in E_D$:
  \begin{align}\label{eq:basisrep}
    \ip{x}{y}{\h} = [x]^\intercal [y], \quad \text{where } [x] \coloneqq \lr{\ip{x}{e_i}{\h}}_{i \le D} \text{ and } [y] \coloneqq \lr{\ip{y}{e_i}{\h}}_{i \le D}.
  \end{align}
  Thus, \eqref{eq:bthus} states
  \begin{align*}
    \lr{\mathcal{J}_D \mathcal{J}_D^\intercal}_{nm} =  \ip{P_{E_D}\psi_n}{P_{E_D}\psi_m}{\h}.
  \end{align*}

  By \cite[Theorem~26.21]{Schilling2017-ex}, we have $\norm{\psi_n - P_{E_D}\psi_n}{\h} \longrightarrow 0$. By bicontinuity of $\ip{\cdot}{\cdot}{\h}$, the above display then yields
  \begin{align}\label{eq:jacobianSquared}
    \mathcal{J}_D\mathcal{J}_D^\intercal  \longrightarrow P \coloneqq  \lr{\ip{\psi_n}{\psi_m}{\h}}_{i, j \le k}.
  \end{align}
  Since $\psi_1, \ldots, \psi_k$ are linearly independent, $P \succ 0$ by \cite[Theorem~7.2.10(b)]{Horn_Johnson_2012}. In particular, $P \in \text{GL}_k(\mathbb{R})$. Since $\text{GL}_k(\mathbb{R})$ is open \cite[Example~1.27]{Lee2012}, the above limit yields $\mathcal{J}_D\mathcal{J}_D^\intercal \in \text{GL}_k(\mathbb{R})$ for $D$ large enough.
  \\\\
  \textit{Claim~2}: Let $S^{k-1}\coloneqq \curly{x \in \rk : \norm{x}{\rk} = 1}$. We note down some matrix identities. We refer the reader to \cite[Theorem~4.2.2]{Horn_Johnson_2012} and \cite[Section~5.6.6]{Horn_Johnson_2012}. Let $A \in \rkbyk$ be positive definite. Then,
  \begin{align}
    \eigmax{A} &= \sup_{x \in S^{k - 1}} x^\intercal A x ,\label{eq:eigmax}\\ 
    \eigmin{A} &= \inf_{x \in S^{k - 1}} x^\intercal A x, \label{eq:eigmin}  \\
    \eigmax{A} &= \norm{A^{1/2}}{\text{op}}^2, \label{eq:eigsq}\\
    \eigmax{A} &= \frac{1}{\eigmin{A^{-1}}}. \label{eq:eiginv}
  \end{align}
  
  Coupling \eqref{eq:eigsq} with \eqref{eq:eiginv} yields 
  \begin{align*}
    \norm{\renorm}{\text{op}}^2 = \frac{1}{\eigmin{i_D^{-1}} }. 
  \end{align*}
  The claim thus follows, if we can show $\eigmin{i_D^{-1}} \gtrsim 1$. Using \eqref{eq:eigmin} and recalling \eqref{eq:renormdef}, we have
  \begin{align}\label{eq:mineig}
    \eigmin{i_D^{-1}} = \inf_{x \in S^{k - 1}}x^\intercal \mathcal{J}_D[\fishermatrix]^{-1}\mathcal{J}_D^\intercal x.
  \end{align}
  One can quickly confirm that
  \begin{align*}
    x^\intercal \mathcal{J}_D[\fishermatrix]^{-1}\mathcal{J}_D^\intercal x = I \cdot II ,
  \end{align*}
  where
  \begin{align*}
    I &\coloneqq y^\intercal [\fishermatrix]^{-1} y, \quad y \coloneqq \mathcal{J}_D^\intercal x / \norm{\mathcal{J}_D^\intercal x}{\mathbb{R}^D},\\
    II &\coloneqq \norm{\mathcal{J}_D^\intercal x}{\mathbb{R}^D}^2.
  \end{align*}
  Recalling \eqref{eq:mineig}, it suffices to establish a positive lower bound for $I$ and $II$, that is uniform in $x \in S^{k - 1}$ and $D \ge D_0$, for some $D_0 \in 
\mathbb{N}$.
  \\\\
  \textit{Bounding I:} Note $y\in S^{k-1}$. Thus, \eqref{eq:eigmin} and \eqref{eq:eiginv} yield
  \begin{align}\label{eq:firstLB}
    y^\intercal [\fishermatrix]^{-1} y \ge \eigmin{[\fishermatrix]^{-1}} = 1 / \eigmax{[\fishermatrix]}.
  \end{align}
  Hence, it suffices to show $ \eigmax{[\fishermatrix]} \lesssim 1 $. By \eqref{eq:reveals}, we have for $x \in S^{k -1}$,
  \begin{align}\label{eq:finiteFisherEig}
    x^\intercal [\fishermatrix] x = \sum_{i = 1}^k \sum_{j = 1}^k x_i\ip{\mathcal{I}e_i}{e_j}{\h}x_j = \ip{\mathcal{I}z}{z}{\h}, 
  \end{align} 
  where $z \coloneqq \sum_{i = 1}^k x_i e_i$ satisfies $\norm{z}{\h} = 1$ by Pythagoras' Theorem. Thus using \eqref{eq:eigmax}, the above display, and subsequently Cauchy-Schwarz,
  \begin{align}\label{eq:idsqbound}
    \eigmax{[\fishermatrix]} =  \sup_{x \in S^{k - 1}} x^\intercal [\fishermatrix] x\le \sup_{\norm{z}{\h} = 1}\ip{\mathcal{I}z}{z}{\h} \le \norm{\mathcal{I}}{\text{op}}.
  \end{align}
  Clearly, $\Vert \mathcal{I} \Vert_{\text{op}} > 0$. Indeed, if $\mathcal{I} = 0$, we would have $\mathcal{I}_D = 0$, which contradicts Hypothesis~\ref{cond:inv}. Applying \eqref{eq:idsqbound} to \eqref{eq:firstLB}, we have
  \begin{align*}
    y^\intercal [\fishermatrix]^{-1} y \ge \frac{1}{\norm{\mathcal{I}}{\text{op}}}.
  \end{align*}
  \textit{Bounding II}: 
  We establish a positive lower bound for 
  \begin{align*}
    \inf\curly{\norm{\mathcal{J}_D^\intercal x}{\mathbb{R}^D}^2 : x \in S^{k - 1}} 
  \end{align*}
  that is uniform in $D \ge D_0$ for some $D_0 \in \mathbb{N}$. Using \eqref{eq:eigmin} in the last equality,
  \begin{align}\label{eq:lmineq}
    \inf\curly{\norm{\mathcal{J}_D^\intercal x}{\mathbb{R}^D}^2 : x \in S^{k - 1}} = \inf\curly{x^\intercal \mathcal{J}_D\mathcal{J}_D^\intercal x : x \in S^{k - 1}} = \eigmin{\mathcal{J}_D \mathcal{J}_D^\intercal}.
  \end{align}
  By \eqref{eq:jacobianSquared} and continuity of $\lambda_{\text{min}}$ on the space of symmetric $k  \times k$ matrices \cite[Corollary~6.3.8]{Horn_Johnson_2012},
  \begin{align*}
    \eigmin{\mathcal{J}_D \mathcal{J}_D^\intercal}  \longrightarrow \eigmin{P}.
  \end{align*}
  Since $P \succ 0$, we have $\eigmin{P} > 0$. Thus, the above limit yields $D_0 \in \mathbb{N}$ such that
  \begin{align*}
    \eigmin{\mathcal{J}_D \mathcal{J}_D^\intercal} \ge  \frac{1}{2}\eigmin{P}, \quad \text{for }D \ge D_0.
  \end{align*}
  Recalling \eqref{eq:lmineq}, we have
  \begin{align*}
    \inf \curly{\norm{\mathcal{J}_D^\intercal x}{\mathbb{R}^D}^2 : x \in S^{k - 1}}  \ge  \frac{1}{2}\eigmin{P}, \quad \text{for }D \ge D_0.
  \end{align*}
\end{proof}
\begin{proof}[Proof of Lemma~\ref{lem:asymptnorm}]
  We consider $N$ large enough that $\mathcal{J}_D [\fishermatrix]^{-1} \mathcal{J}_D^\intercal \succ 0$. This is possible by Lemma~\ref{lem:boundinv}. We can then write 
  \begin{align}\label{eq:idinv}
    i_D^{-1} = \mathcal{J}_D [\fishermatrix]^{-1} \mathcal{J}_D^\intercal.
  \end{align}

  Plugging \eqref{eq:centering} into \eqref{eq:inlem}, we need to show 
  \begin{align}\label{eq:fell}
    \sum_{i = 1}^N W_{iN} \dlim \mathcal{N}(0, I_k),
  \end{align}
  where
  \begin{align}\label{eq:fellerTerm}
    W_{iN} \coloneqq \frac{1}{\sqrt{N}}\renorm \ip{\varepsilon_i}{\lin \perturb(X_i)}{V} , \quad i \le N.
  \end{align}
  
  Before starting, we briefly study 
  \begin{align*}
    A_D \coloneqq \ip{\varepsilon_1}{ \lin \perturb(X_1)}{V}.
  \end{align*}
  Since $\varepsilon_1$ and $X_1$ are independent, and $\varepsilon_1$ is a centered Gaussian,
  \begin{align}\label{eq:llaww}
  \mathcal{L}\lr{A_D \mid X_1 = x} = \mathcal{L}\lr{\ip{\varepsilon_1}{ \lin \perturb(x)}{V}} =  \mathcal{N}\lr{0, V_D(x)}, \quad x \in \mathcal{X},
  \end{align}
  where $V_D(x) \in \rkbyk$ is defined by
  \begin{align*}
  V_D^{ij}(x) \coloneqq \Etheta \sq{\ip{\varepsilon_1}{ \lin \pertur^i(x) }{V}\ip{\varepsilon_1}{ \lin \pertur^j(x) }{V} }, \quad i,j \le k.
\end{align*}
As the covariance operator of $\varepsilon_1$ is the identity with respect to $\lr{V, \ip{\cdot}{\cdot}{V}}$, we have
\begin{align}\label{eq:vdvariance}
  V_D^{ij}(x) = \ip{\lin \pertur^i(x)}{\lin \pertur^j(x)}{V}.
\end{align}

We now establish \eqref{eq:fell} by the Lindeberg-Feller central limit theorem \cite[Corollary~18.2]{doi:10.1137/1.9780898719895}. Clearly, $\curly{W_{iN}}_{i \le N}$ are i.i.d. Furthermore, $\Etheta W_{iN} = 0$. Indeed, let $W_N \coloneqq W_{1N}$. Recalling \eqref{eq:fellerTerm}, we have
\begin{align}\label{eq:secondad}
  W_N =  \frac{1}{\sqrt{N}}\renorm A_D.
\end{align}
Thus, $\Etheta W_{N} = \frac{1}{\sqrt{N}}\renorm \Etheta[\Etheta[A_D \mid X_1]] = 0$ by \eqref{eq:llaww}. 

It now suffices to show:
  \begin{enumerate}
    \item $\Vtheta  \left(\sum_{i = 1}^N W_{iN} \right) = I_k$,
    \item $\sum_{i = 1}^N \Etheta \left[\Vert W_{iN} \Vert_{\rk}^2 ; \Vert W_{iN} \Vert_{\rk} \ge \eta \right] \longrightarrow 0$ for all $\eta > 0$.
  \end{enumerate}
\textit{Condition 1}: Since $\curly{W_{iN}}_{i \le N}$ are i.i.d, 
\begin{align*}
  \Vtheta  \left(\sum_{i = 1}^N W_{iN} \right) = I_k \Longleftrightarrow N \Vtheta  W_N = I_k \Longleftrightarrow \Vtheta  W_N = \frac{1}{N}I_k.
\end{align*}
We show the latter statement. By \eqref{eq:secondad}, we have
\begin{align}\label{eq:fellerVariance}
  \Vtheta W_N = \frac{1}{N}\renorm \lr{\Vtheta  A_D}\renorm.
\end{align}
Using the law of total variance and \eqref{eq:llaww}, we have
\begin{align}\label{eq:varAD}
  \Vtheta  A_D &= \Etheta \sq{\Vtheta \sq{A_D \mid X_1}} + \Vtheta \sq{\Etheta \sq{A_D \mid X_1}} = \Etheta V_D(X_1).
\end{align}
We study the components of $\Etheta V_D(X_1)$. Plugging in \eqref{eq:vdvariance}, and noting $X_1 \sim \lambda_\mathcal{X}$:
\begin{align*}
  \Etheta V_D^{ij}(X_1) = \Etheta \sq{\ip{\lin \pertur^i(X_1)}{\lin \pertur^j(X_1)}{V}} = \ip{\lin \pertur^i}{\lin \pertur^j}{L^2}.
\end{align*}
Hence, recalling \eqref{eq:varAD} and $\mathcal{I} = \lin ^*\lin $,
\begin{align*}
  \lr{\Vtheta  A_D}_{ij} = \ip{\lin \pertur^i}{\lin \pertur^j}{L^2}  = \ip{\mathcal{I}\pertur^i}{\pertur^j}{\h}.
\end{align*}
We now substitute 
\begin{align}\label{eq:substperturb}
  \pertur^j =  \fishermatrix^{-1}\proj\psi_j,
\end{align}
and subsequently apply \eqref{eq:pswap}:
\begin{align}\label{eq:pinto}
  \lr{\Vtheta  A_D}_{ij} = \ip{\mathcal{I}\pertur^i}{\fishermatrix^{-1}\proj\psi_j}{\h} = \ip{P_{E_D}\mathcal{I}\pertur^i}{\fishermatrix^{-1}\proj\psi_j}{\h}.
\end{align}
Recalling \eqref{eq:substperturb}, we can rewrite the first argument,
\begin{align*}
  P_{E_D}\mathcal{I}\pertur^i = \fishermatrix \pertur^i = P_{E_D}\psi_i. 
\end{align*}
We plug the previous equation into \eqref{eq:pinto}:
\begin{align}\label{eq:finalAD}
  \left(\Vtheta  A_D \right)_{ij} = \ip{\proj\psi_i}{\fishermatrix^{-1}\proj\psi_j}{\h}. 
\end{align}
Let $[\proj\psi_i]$ represent $\proj\psi_n$ in the $\curly{e_1, \ldots, e_D}$ basis. Then, $\fishermatrix^{-1}\proj\psi_j$ has representation $[\fishermatrix]^{-1}[\proj\psi_j]$ with respect to the same basis. Applying \eqref{eq:basisrep}, we have
\begin{align}\label{eq:recall}
  \ip{\proj\psi_i}{\fishermatrix^{-1}\proj\psi_j}{\h} = [\proj\psi_i]^\intercal [\fishermatrix]^{-1} [\proj\psi_j].
\end{align}
Recalling \eqref{eq:repvec}, we have $[\proj\psi_i] = \mathcal{J}_D^\intercal \bs{e}_i$. Thus, 
\begin{align*}
  \ip{\proj\psi_i}{\fishermatrix^{-1}\proj\psi_j}{\h} &= \lr{\mathcal{J}_D^\intercal \bs{e}_i}^\intercal [\fishermatrix]^{-1} \mathcal{J}_D^\intercal \bs{e}_j \\ &=   \bs{e}_i^\intercal \mathcal{J}_D [\fishermatrix]^{-1} \mathcal{J}_D^\intercal \bs{e}_j, \\
  &= \lr{\mathcal{J}_D [\fishermatrix]^{-1} \mathcal{J}_D^\intercal}_{ij}.
\end{align*}
Plugging this into \eqref{eq:finalAD} and subsequently recalling \eqref{eq:idinv}, we have
\begin{align}\label{eq:coup}
  \Vtheta  A_D = \mathcal{J}_D [\fishermatrix]^{-1} \mathcal{J}_D^\intercal = i_D^{-1}.
\end{align}
We plug the above display into \eqref{eq:fellerVariance}, and obtain 
\begin{align*}
  \Vtheta W_N = \frac{1}{N}\renorm i_D^{-1} \renorm  =  \frac{1}{N}I_k.
\end{align*}
\textit{Condition 2}: Since $\curly{W_{iN}}_{i \le N}$ are i.i.d, we can equivalently show 
\begin{align*}
  N\Etheta \sq{ \Vert W_N \Vert_{\rk}^2 \mathbbm{1}\curly{\norm{W_N}{\rk} \ge \eta} }\longrightarrow 0, \quad \text{for all $\eta > 0$}.
\end{align*} 
By Cauchy-Schwarz, 
\begin{align}\label{eq:lindeberg}
  N\Etheta \sq{ \Vert W_N \Vert_{\rk}^2 \mathbbm{1}\curly{\norm{W_N}{\rk} \ge \eta} } \le N\sqrt{I \cdot II},
\end{align}
where 
\begin{align}
  I &\coloneqq\Etheta \Vert W_N \Vert_{\rk}^4,\label{eq:11} \\ 
  II &\coloneqq \Ptheta \lr{\Vert W_N \Vert_{\rk} \ge \eta}. \label{eq:22}
\end{align}
\textit{Bounding I}: Apply Cauchy-Schwarz to \eqref{eq:fellerTerm}. Then,
  \begin{align}\label{eq:termbound1}
    \Vert W_N \Vert_{\rk} \le \frac{C_1}{\sqrt{N}}\norm{\lin \perturb}{L^\infty} \norm{\varepsilon_1}{V},
  \end{align} 
  where $C_1 \coloneqq  \sup_{D \in \mathbb{N}} \norm{\renorm}{\text{op}} < \infty$ by Lemma~\ref{lem:boundinv}. We plug \eqref{eq:termbound1} into \eqref{eq:11}:
  \begin{align}\label{eq:Ibound}
    I \le  C_2 \frac{\norm{\lin \perturb}{L^\infty}^4}{N^2}, \quad \text{where } C_2 \coloneqq C_1^4\Etheta \norm{\varepsilon_1}{V}^4.
  \end{align}
  \textit{Bounding II}: Recall \eqref{eq:22}. The tower property then yields
  \begin{align}\label{eq:tower}
    II = \Etheta [\Ptheta (\Vert W_N \Vert_{\rk} \ge \eta \mid X_1) ].
  \end{align}
  Note that $\mathcal{L}\lr{W_N \mid X_1 = x}$ is a centered Gaussian for $x \in \mathcal{X}$. By the Gaussian tail bound,
  \begin{align}\label{eq:pluginto}
    \Ptheta \lr{\Vert W_N \Vert_{\rk} \ge \eta \mid X_1} \le 2 \exp \lr{- \frac{C_3}{\Etheta \sq{\rknorm{W_N}^2 \mid X_1}} },
  \end{align}
  where $C_3 > 0$ is a constant. To be precise, we apply \cite[Theorem~2.1.20]{Giné_Nickl_2021}, noting that $\norm{\cdot}{\rk}$ and $x \in \rk \mapsto \max_{i \le k} \abs{x_i}$ are equivalent norms.
  
  We can now bound the exponent in \eqref{eq:pluginto} using \eqref{eq:termbound1}:
  \begin{align*}
    \Etheta \sq{\rknorm{W_N}^2 \mid X_1} \le C_4  \frac{ \norm{\lin \perturb}{L^\infty}^2}{N} , \quad \text{where }C_4 \coloneqq C_1^2 \Etheta \norm{\varepsilon_1}{V}^2.
  \end{align*}
  As $x \mapsto 2\exp(-C_3 / x)$ is increasing, we can plug the above bound into \eqref{eq:pluginto}:
  \begin{align}\label{eq:anbound}
    \Ptheta \lr{\Vert W_N \Vert_{\rk} \ge \eta \mid X_1} \le 2\exp\lr{- C_5 a_N}, \quad \text{where } C_5 \coloneqq \frac{C_3}{C_4} \text{ and }a_N \coloneqq \frac{N}{\norm{\lin \perturb}{L^\infty}^2}.
  \end{align}
  Take the expectation on both sides, and recall \eqref{eq:tower}. Then,
  \begin{align}\label{eq:IIbound}
    II \lesssim \exp\left(- C_5 a_N \right).
  \end{align}
  Plug the bounds \eqref{eq:Ibound} and \eqref{eq:IIbound} into \eqref{eq:lindeberg}. Then,
  \begin{align}\label{eq:lindlast}
    III \coloneqq N\Etheta [\Vert W_N \Vert_{\rk}^2 ; \Vert W_N  \Vert_{\rk} \ge \eta]  \lesssim  \norm{\lin \perturb}{L^\infty}^2 \exp\left(- \frac{1}{2}C_5 a_N \right).
  \end{align}
  By assumption, $\Vert \lin \perturb\Vert_{L^\infty} = o\lr{\sqrt{N}}$. Recalling the definition of $a_N$ from \eqref{eq:anbound}, this is equivalent to $a_N \longrightarrow \infty$. 
  
  Recall that we need to show $III \longrightarrow 0$. Suppose we can show
  \begin{align}\label{eq:suffcond}
    \norm{\lin \perturb}{L^\infty}^2  \lesssim a_N^q, \quad \text{for some $q >0$}.
  \end{align}
 Then, by \eqref{eq:lindlast}, we have
 \begin{align*}
  III \lesssim a_N^q \exp\left(- \frac{1}{2}C_5 a_N \right) \longrightarrow 0,
 \end{align*}
where we pass the limit by polynomial to exponential comparison.
 
 Recall the definition of $a_N$ from \eqref{eq:anbound}. Then,
 \begin{align*}
  \norm{\lin \perturb}{L^\infty}^2 \lesssim a_N^q \Longleftrightarrow \norm{\lin \perturb}{L^\infty}^2 \lesssim \norm{\lin \perturb}{L^\infty}^{-2q} N^{q} \Longleftrightarrow \norm{\lin \perturb}{L^\infty}^{2 + 2q} \lesssim N^{q}.
 \end{align*}
 Apply now $x \mapsto x^{\frac{1}{2 + 2q}}$. Then, \eqref{eq:suffcond} is equivalent to
  \begin{align*}
    \norm{\lin \perturb}{L^\infty} \lesssim N^{\frac{q}{2+2q}}, \quad \text{ for some }q > 0.
  \end{align*}
  Since $\frac{q}{2+2q}$ can take on all values in $\lr{0, \frac{1}{2}}$, the above display holds by assumption.
\end{proof}
\begin{lemma}\label{lem:reginc}
  Fix $M > 0$. Assume:
  \begin{enumerate}
    \item $\sup_{\theta \in \regset} \norm{\forward\lr{\theta +  \lambda^\intercal \bs{h}_N }  - \forward\lr{\theta} }{L^2} = o\lr{\delta_N^\mathcal{G}}$ for all $\lambda \in \rk$.
    \item $\norm{\perturb}{\mathcal{R}}= o\lr{\sqrt{N}}$.
  \end{enumerate}
  Then, there exists $M' > 0$ such that for all $\lambda \in \rk$,
  \begin{align}\label{eq:sthat}
    \regset +  \lambda^\intercal \bs{h}_N  \subset \Theta_{N, M'}, \quad \text{ for }N \text{ large enough}.
  \end{align}
\end{lemma}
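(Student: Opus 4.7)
The goal is to find $M'>0$ so that for every $\theta\in\regset$, both defining conditions of $\Theta_{N,M'}$ hold for $\theta + \lambda^\intercal \bs{h}_N$. First I would check the membership in $E_D$: since $\bar{\psi}_D^i = \fishermatrix^{-1}\proj \psi_i \in E_D$, we have $\perturb\in E_D^k$ and so $\lambda^\intercal \bs{h}_N \in E_D$; combined with $\theta\in E_D$, the sum lives in $E_D$. This reduces the lemma to two quantitative bounds: a forward-map bound and an $\mathcal{R}$-norm bound.

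For the forward-map condition, the plan is a direct triangle inequality:
\begin{align*}
  \norm{\forward_{\theta + \lambda^\intercal \bs{h}_N} - \forward_{\truth}}{L^2} \le \norm{\forward_{\theta + \lambda^\intercal \bs{h}_N} - \forward_{\theta}}{L^2} + \norm{\forward_{\theta} - \forward_{\truth}}{L^2}.
\end{align*}
The first term is $o(\delta_N^\mathcal{G})$ uniformly over $\theta\in\regset$ by Assumption~1, and the second is at most $M\delta_N^\forward$ by definition of $\regset$. Hence for $N$ large the sum is bounded by $(M+1)\delta_N^\forward$ uniformly over $\theta\in\regset$.

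For the $\mathcal{R}$-norm condition, I would estimate $\norm{\lambda^\intercal\bs{h}_N}{\mathcal{R}}$ directly. Unpacking the vectorized notation of Section~\ref{subsec:notation},
\begin{align*}
  \norm{\lambda^\intercal \bs{h}_N}{\mathcal{R}} \le \sum_{i=1}^k \abs{\lambda_i}\, \norm{(\bs{h}_N)_i}{\mathcal{R}} \le \frac{C_k\,\rknorm{\lambda}}{\sqrt{N}}\,\norm{\renorm}{\text{op}}\,\norm{\perturb}{\mathcal{R}},
\end{align*}
where $C_k$ depends only on $k$. By Lemma~\ref{lem:boundinv}(2), $\norm{\renorm}{\text{op}}\lesssim 1$, and by Assumption~2, $\norm{\perturb}{\mathcal{R}} = o(\sqrt{N})$; combined, $\norm{\lambda^\intercal\bs{h}_N}{\mathcal{R}} = o(1)$. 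Then triangle inequality yields $\norm{\theta + \lambda^\intercal \bs{h}_N}{\mathcal{R}} \le M + o(1) \le M+1$ for $N$ large. Taking $M' \coloneqq M+1$ completes the argument.

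There is no substantive obstacle here: the result is essentially a bookkeeping consequence of the two hypotheses combined with the already-established operator bound $\norm{\renorm}{\text{op}}\lesssim 1$. The only subtlety is to note that both $\theta$ and $\lambda^\intercal \bs{h}_N$ lie in $E_D$ so the membership condition is automatic, and to ensure the choice of $M'$ depends only on $M$ and $\lambda$ (but not on $N$), which is transparent from the estimates above.
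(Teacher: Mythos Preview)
Your proof is correct and follows essentially the same approach as the paper: set $M' = M+1$, use the triangle inequality for the forward-map bound together with Assumption~1, and bound $\norm{\lambda^\intercal\bs{h}_N}{\mathcal{R}}$ via Lemma~\ref{lem:boundinv} and Assumption~2. One small remark: since the statement quantifies ``there exists $M'$ such that for all $\lambda$'', your $M'$ should be independent of $\lambda$ as well as $N$---and indeed $M'=M+1$ is, so your argument already delivers this despite the parenthetical slip.
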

\begin{proof}
  We show that $M' \coloneqq M + 1$ works. Fix $\lambda \in \rk$. We need to show
  \begin{align}\label{eq:firstcond}
    \norm{\theta +  \lambda^\intercal \bs{h}_N }{\mathcal{R}} &\le M + 1, \\\label{eq:secondcond}
    \norm{\mathcal{G}\lr{\theta +  \lambda^\intercal \bs{h}_N } - \mathcal{G}\lr{\truth}}{L^2} &\le (M + 1) \delta_N^\mathcal{G},
  \end{align}
  for $N$ large enough and $\theta \in \regset$.
  \\\\
  \eqref{eq:firstcond}: By the triangle inequality,
  \begin{align}\label{eq:reccaling}
    \norm{\theta +  \lambda^\intercal \bs{h}_N }{\mathcal{R}} \le M + \norm{ \lambda^\intercal \bs{h}_N }{\mathcal{R}}, \quad \theta \in \Theta_{N, M}.
  \end{align}
  Recall \eqref{eq:perdef} and Section~\ref{subsec:notation} on vectorized notation. Then, 
  \begin{align}\label{eq:asdin}
    \norm{ \lambda^\intercal \bs{h}_N }{\mathcal{R}} \lesssim \norm{\bs{h_N}}{\mathcal{R}} \lesssim \norm{i_D^{\frac{1}{2}}}{\text{op}} \frac{\norm{\perturb}{\mathcal{R}}}{\sqrt{N}} \lesssim \frac{\norm{\perturb}{\mathcal{R}}}{\sqrt{N}} \longrightarrow 0.
  \end{align}
  Here, we use Lemma~\ref{lem:boundinv} in the last inequality, and pass the limit by Assumption~2.
  In particular, $\norm{ \lambda^\intercal \bs{h}_N }{\mathcal{R}} \le 1$ for $N$ large enough. Recalling \eqref{eq:reccaling}, we are done.
  \\\\
  \eqref{eq:secondcond}: Using Assumption~1, we have
  \begin{align*}
  \sup_{\theta \in \Theta_{N, M}} \norm{\forward\lr{\theta +  \lambda^\intercal \bs{h}_N }  - \forward\lr{\theta} }{L^2} \le \delta_N^\forward, \quad \text{for $N$ large enough}.
  \end{align*}
  Thus, for $N$ large enough and $\theta \in \regset$, the triangle inequality yields
  \begin{align*}
    \norm{\mathcal{G}\lr{\theta +  \lambda^\intercal \bs{h}_N } - \mathcal{G}\lr{\truth}}{L^2} \le \norm{\mathcal{G}_{\theta} - \mathcal{G}_{\theta_0}}{L^2} + \norm{\mathcal{G}\lr{\theta +  \lambda^\intercal \bs{h}_N } - \mathcal{G}\lr{\theta}}{L^2} 
    \le M\delta_N^\forward +\delta_N^\forward.
  \end{align*}
\end{proof}
\begin{proof}[Proof of Lemma~\ref{lem:empiricalprocess}]
  We proceed similarly to \cite[Lemma~4.1.7]{Nickl2023} and \cite[Lemma~4.1.8]{Nickl2023}. Fix $M > 0$ and $\lambda \in \rk$. Construct $M' \ge \norm{\theta_0}{\mathcal{R}}$ from Lemma~\ref{lem:reginc}, so that $\theta_0 \in \Theta_{N, M'}$ and \eqref{eq:sthat} hold.
  \\\\
  \eqref{eq:emplimits1}: By Markov's inequality, it suffices to show
  \begin{align}\label{eq:desiredlimit}
    \Etheta \sup_{\theta \in \Theta_{N, M'}}\left\lvert  \sum_{i=1}^N\ip{\varepsilon_i}{R_\theta(X_i)}{V} \right\rvert \longrightarrow 0.
  \end{align}
  Since $\Etheta  \langle \varepsilon_1, R_\theta(X_1) \rangle_V =\Etheta \sq{ \Etheta  \sq{\langle \varepsilon_1, R_\theta(X_1) \mid X_1 }} = 0$, we can employ the bound from Theorem~\ref{thm:empb}. The bound is specified up to a universal constant. Hence, we can allow the function class to depend on $N$. We denote the quantities $I, u, d, s, \mathcal{F}$ from Theorem~\ref{thm:empb} by $I_N, u_N, d_N, s_N, \mathcal{F}_N$. 
  
  Let $\mathcal{F}_N \coloneqq \{f_\theta : \theta \in \Theta_{N, M'}\}$, where
  \begin{align*}
    f_\theta(x, e) \coloneqq \ip{e}{R_{\theta}(x)}{V}, \quad (x, e) \in \mathcal{X} \times V.
  \end{align*}
  Take $P 
  \coloneqq \lambda_\mathcal{X} \otimes \mathcal{N}_V$. Note that $\lr{\Theta_{N, M'}, d_\Theta}$ is separable as $\Theta_{N, M'} \subset E_D$, where we recall that $E_D$ is finite-dimensional. 
  
  We now argue that the map $\theta \in \Theta_{N, M'} \mapsto f_\theta(x, e)$ is $d_\Theta$-Lipschitz continuous for all $(x, e) \in \mathcal{X} \times V$. By Cauchy-Schwarz,
  \begin{align}\label{eq:diff1}
    \abs{f_\theta(x, e) - f_{\theta'}(x, e)} &\le \norm{e}{V}\norm{R_\theta(x) - R_{\theta'}(x)}{V}.
  \end{align}
  Using Hypothesis~\ref{cond:4}, we have
  \begin{align*}
    \norm{R_\theta(x) - R_{\theta'}(x)}{V} = \frac{\norm{R_\theta(x) - R_{\theta'}(x)}{V}}{d_\Theta(\theta, \theta')}d_\Theta(\theta, \theta') \le C_1 r_N d_\Theta(\theta, \theta'),
  \end{align*}
  where $C_1$ is independent of $N \in \mathbb{N}, x \in \mathcal{X}$ and $\theta, \theta' \in \Theta_{N, M'}$, satisfying $\theta \neq \theta'$.
  Plugging the above display into \eqref{eq:diff1} yields
  \begin{align}
    \abs{f_\theta(x, e) - f_{\theta'}(x, e)} \le C_1 \norm{e}{V} r_N d_\Theta(\theta, \theta'), \quad \label{eq:argas}
  \end{align}
  for $\theta, \theta' \in \Theta_{N, M'}$ and $(x, e) \in \mathcal{X} \times V$. Thus, we have $d_\Theta$-Lipschitz continuity.
  
  We have to check $0 \in \mathcal{F}_N$. This is clear, since $ f_{\theta_0} = 0$ follows by $R_{\theta_0} = 0$.
  
  We now construct $F_N : \mathcal{X} \times V \longrightarrow (0, \infty)$ satisfying
  \begin{align*}
    F_N(x, e) \ge \sup_{\theta \in \Theta_{N, M'}} \abs{ f_\theta(x, e)}, \quad (x, e) \in \mathcal{X} \times V.
  \end{align*}
  Set $\theta' = \theta_0$ in \eqref{eq:argas}, and recall that $f_{\theta_0} = 0$. Then for $\theta \in \Theta_{N, M'}$, we have 
  \begin{align*}
    \abs{f_\theta(x, e)} \le C_1 \norm{e}{V} r_N d_\Theta(\theta, \theta_0) \le C_2 \norm{e}{V} r_N\delta_N^\Theta,
  \end{align*}
  where $C_2 > 0$ is independent of $(x, e) \in \mathcal{X} \times V$ and $\theta \in \Theta_{N, M'}$. Here, we use Hypothesis~\ref{cond:4} in the last inequality. We can then set
  \begin{align}\label{eq:jsetting}
    F_N(x, e) \coloneqq C_2 \norm{e}{V} r_N\delta_N^\Theta,  \quad (x, e) \in \mathcal{X} \times V.
  \end{align}
  Clearly,
  \begin{align}\label{eq:l2bound}
    \Vert F_N \Vert_{L^2(P)} = C_3 r_N\delta^\Theta_N, \quad \text{where }C_3 \coloneqq C_2 \lr{\Etheta \norm{\varepsilon_1}{V}^2}^{\frac{1}{2}}.
  \end{align}
  We now construct $s_N$ such that
  \begin{align}
    s_N \ge \sup_{\theta \in \Theta_{N, M'}} \Vert f_\theta\Vert_{L^2(P)}. 
  \end{align}
  By Cauchy-Schwarz and Hypothesis~\ref{cond:4}, we have
  \begin{align*}
    \sup_{\theta \in \Theta_{N, M'}} \norm{f_\theta}{L^2(P)} \le \lr{ \Etheta \norm{\varepsilon_1}{V}^2}^{\frac{1}{2}} \sup_{\theta \in \Theta_{N, M'}} \Vert R_\theta \Vert_{L^2} \lesssim  \sigma_N.
  \end{align*}
  Thus, we can take $s_N \coloneqq C_4 \sigma_N$ for some constant $C_4 > 0$. Let
  \begin{align}\label{eq:ddef}
    d_N \coloneqq \frac{s_N}{\norm{F_N}{L^2(P)}}.
  \end{align}
  Recalling \eqref{eq:l2bound}, we have
  \begin{align}\label{eq:dnbound}
    d_N= \frac{C_4}{C_3}\frac{\sigma_N}{r_N \delta_N^\Theta}.
  \end{align}
  
  We proceed to bound the quantity 
  \begin{align}\label{eq:udef}
    u_N \coloneqq  \Vert \text{max}_{i\le N} F_N(X_i) \Vert_{L^2(P^N)}. 
  \end{align}
  Recalling \eqref{eq:jsetting}, we have
  \begin{align}\label{eq:uref}
    u_N  = C_2 \lr{\Etheta\max_{i\le N} \norm{\varepsilon_i}{V}^2}^{\frac{1}{2}}  r_N \delta_N^\Theta.
  \end{align}
  Let $n  \coloneqq \text{dim} \, V$, and let $v_1, \ldots, v_n$ be an orthonormal basis of $V$. Define $\varepsilon_{ij} \coloneqq \ip{\varepsilon_i}{v_j}{V}$ for $i \le N$ and $j \le n$. Then, $\varepsilon_{ij} \stackrel{\text{iid}}{\sim} \mathcal{N}(0, 1)$ and \cite[Lemma~2.3.3]{Giné_Nickl_2021} yields
  \begin{align*}
    \Etheta\max_{i\le N} \norm{\varepsilon_i}{V}^2 \le \sum_{j = 1}^{n}\Etheta \max_{i\le N} \norm{\varepsilon_{ij}}{V}^2 \lesssim \log \, N.
  \end{align*}
  We plug this bound into \eqref{eq:uref}:
  \begin{align}\label{eq:unbound}
    u_N \lesssim \sqrt{\log \, N} r_N \delta_N^\Theta.
  \end{align}
  Let $Q$ represent a distribution on $\mathcal{X} \times V$ with finite support, and let
  \begin{align*}
    I_N(t) \coloneqq  \int_0^t \sup_Q \sqrt{\log 2 N(\mathcal{F}_N, \Vert \cdot \Vert_{L^2(Q)}, \varepsilon \Vert F_N \Vert_{L^2(Q)})} \, d\varepsilon, \quad t > 0.
  \end{align*}
   Theorem~\ref{thm:empb} now yields
   \begin{align}\label{eq:empbound}
    \Etheta \sup_{\theta \in \Theta_{N, M'}}\left\lvert  \sum_{i=1}^N \ip{\varepsilon_i}{R_\theta(X_i)}{V} \right\rvert  \lesssim \max\curly{ I, II },
  \end{align}
  where
  \begin{align}
    I &\coloneqq \sqrt{N}\Vert F_N \Vert_{L^2(P)}I_N(d_N),\label{eq:firstmax}\\
    II &\coloneqq u_N\lr{\frac{I(d_N)}{d_N}}^2 .\label{eq:secmax}
  \end{align}
  
  We proceed to bound $I_N$ using $J_N$ from Hypothesis~\ref{cond:4}. Recalling \eqref{eq:jsetting}, we have
  \begin{align}\label{eq:qequality}
    \Vert F_N \Vert_{L^2(Q)} = C_2 s_Q r_N\delta_N^\Theta, \quad \text{where }s_Q^2 \coloneqq \int_{\mathcal{X}\times V} \norm{e}{V}^2 \, dQ(x, e).
  \end{align}
  Applying $\norm{\cdot}{L^2(Q)}$ to \eqref{eq:argas}, we have
  \begin{align*}
    \Vert f_{\theta} - f_{\theta'} \Vert_{L^2(Q)} \le C_1 r_N   s_Q d_{\h}(\theta, \theta').
  \end{align*}
  We solve for $s_Qr_N$ in \eqref{eq:qequality}, and plug the resulting expression into the above display. Then,
  \begin{align*}
      \Vert f_{\theta} - f_{\theta'} \Vert_{L^2(Q)} \le C_5\frac{\Vert F_N \Vert_{L^2(Q)} }{\delta_N^\Theta} d_{\h}(\theta, \theta'), \quad \text{where }C_5 \coloneqq \frac{C_1}{C_2}.
  \end{align*}
  Let $\varepsilon > 0$. By the above display, an $\varepsilon \delta_N^\Theta / C_5$ covering of $\Theta_{N, M'}$ in $d_\Theta$ distance, induces an $\varepsilon \Vert F_N \Vert_{L^2(Q)}$ covering of $\mathcal{F}_N$ in $L^2(Q)$ distance. Thus,
  \begin{align*}
    N\lr{\mathcal{F}_N, \Vert \cdot \Vert_{L^2(Q)}, \varepsilon \Vert F_N \Vert_{L^2(Q)}} \le N \lr{\Theta_{N, M'}, d_\Theta, \frac{\varepsilon \delta_N^\Theta}{C_5} }, \quad \varepsilon > 0.
  \end{align*}
  Apply the transformation $\varepsilon \mapsto \sqrt{\log 2\varepsilon}$, take the supremum over $Q$, and integrate over $[0, t]$. Then,
  \begin{align}\label{eq:intineq}
    I_N(t) \le \int_{0}^t  \sqrt{\log 2 N \lr{\Theta_{N, M'}, d_\Theta, \frac{\varepsilon \delta_N^\Theta}{C_5}}} \, d\varepsilon, \quad t > 0.
  \end{align}
  Substituting the variable $\varepsilon' = \varepsilon \delta_N^\Theta/ C_5$, we conclude that
  \begin{align*}
    I_N(t) \le \frac{C_5}{\delta_N^\Theta} J_{N}\lr{t\delta_N^\Theta / C_5 }.
  \end{align*}
  Plugging \eqref{eq:dnbound} into \eqref{eq:intineq}, we now have
  \begin{align}\label{eq:multconst}
    I_N\lr{d_N} \le \frac{C_5 }{\delta_N^\Theta}J_{N}\lr{d_N \delta_N^\Theta / C_5 } = \frac{C_5}{\delta_N^\Theta}J_{N}\lr{\frac{C_4}{C_3 C_5}\frac{\sigma_N}{r_N}}.
  \end{align}
  By \cite[Lemma~3.5.3a]{Giné_Nickl_2021},
  \begin{align*}
    J_N(c t) \le \max\curly{c, 1} J_N(t), \quad c, t > 0.
  \end{align*}
  Thus, \eqref{eq:multconst} yields
  \begin{align*}
    I_N\lr{d_N} \lesssim   \frac{J_{N}\lr{\sigma_N / r_N}}{\delta_N^\Theta} .
  \end{align*}
  We now bound $I$ and $II$ from \eqref{eq:firstmax} and \eqref{eq:secmax}. We plug in the above bound, \eqref{eq:unbound}, \eqref{eq:dnbound} and \eqref{eq:l2bound}. Then,
  \begin{align*}
    I &\lesssim \sqrt{N}r_N \delta_N^\Theta\frac{J_{N}\lr{\sigma_N / r_N}}{\delta_N^\Theta} = \sqrt{N}r_N J_{N}\lr{\sigma_N / r_N}, \\
    II &\lesssim \sqrt{\log \, N} r_N \delta_N^\Theta  \lr{\frac{J_{N}\lr{\sigma_N / r_N}/ \delta_N^\Theta}{\sigma_N /\lr{ r_N \delta_N^\Theta}}}^2 = \sqrt{\log N} r_N^3 \delta_N^\Theta J_{N}(\sigma_N /r_N)^2 /\sigma_N^2.
  \end{align*}
  By Assumption 3-4, we have $I = o(1)$ and $II = o(1)$. Thus, \eqref{eq:desiredlimit} holds by \eqref{eq:empbound}.
\\\\
 \eqref{eq:emplimits2}: We reuse the notation and proceed similarly. We show 
 \begin{align}\label{eq:seclimemp}
  \Etheta \sup_{\theta \in \Theta_{N, M'}} \abs{\sum_{i = 1}^N \lr{\lvert \forward_{\truth, \theta}(X_i) \rvert_V^2 - \Vert \forward_{\truth, \theta} \Vert_{L^2}^2}} \longrightarrow 0.
 \end{align}
 Define $f_\theta(x) \coloneqq \norm{\forward_{\truth, \theta}(x)}{V}^2 $ for $x \in \mathcal{X}$ and $\theta \in \Theta_{N, M'}$. Take $P \coloneqq \lambda_\mathcal{X}$. We first show that $\theta \in \Theta_{N, M'} \mapsto f_\theta(x)$ is $d_\Theta$-Lipschitz continuous for all $x \in \mathcal{X}$. Expanding by \eqref{eq:vec2}, we have
 \begin{align*}
  f_\theta(x) - f_{\theta'}(x) &= 2 \ip{\forward_{ \theta', \theta}(x)}{ \forward_{ \truth, \theta'}(x)}{V} + \norm{\forward_{ \theta', \theta}(x)}{V}^2.
 \end{align*}
By Cauchy-Schwarz,
\begin{align}
  \abs{f_\theta(x) - f_{\theta'}(x)} &\le 2\norm{\forward_{ \theta', \theta}(x)}{V}\norm{\forward_{\truth, \theta'}(x)}{V} + \norm{\forward_{ \theta', \theta}(x)}{V}^2, \\
  &= \norm{\forward_{ \theta', \theta}(x)}{V}\lr{2\norm{\forward_{\truth, \theta'}(x)}{V} + \norm{\forward_{ \theta', \theta}(x)}{V}}. \label{eq:lasteq}
\end{align}
Hypothesis~\ref{cond:4} now yields
\begin{align*}
 \norm{\forward_{ \theta', \theta}(x)}{V} \le C_1 g_N d_\Theta(\theta, \theta') \le C_2 g_N \delta_N^\Theta,
\end{align*}
where $C_1, C_2 > 0$ are independent of $N \in \mathbb{N}$, $\theta, \theta' \in \Theta_{N ,M'}$, and $x \in \mathcal{X}$. We apply the above display to \eqref{eq:lasteq}, and obtain Lipschitz continuity:
\begin{align}
  \abs{f_\theta(x) - f_{\theta'}(x)} &\le C_1 g_N d_\Theta(\theta, \theta') \cdot \lr{2C_2 g_N \delta_N^\Theta + C_2 g_N \delta_N^\Theta}, \\
   &= 3C_1 C_2 \cdot g_N^2 \delta_N^\Theta d_\Theta(\theta, \theta'). \label{eq:recthat}
\end{align}

We now construct $F_N$. Setting $\theta' \coloneqq \theta_0$ in the above display, and recalling Hypothesis~\ref{cond:4}, we have
\begin{align*}
  \abs{f_\theta(x)} \le 3C_1 C_2 \cdot g_N^2 \delta_N^\Theta d_\Theta(\theta, \theta_0) \le C_3 \lr{\delta_N^\Theta g_N}^2,
\end{align*}
for some constant $C_3$ independent of $x \in \mathcal{X}$ and $\theta \in \Theta_{N, M'}$. We can then choose 
\begin{align*}
  F_N(x) \coloneqq C_3 \lr{\delta_N^\Theta g_N}^2, \quad x \in \mathcal{X}.
\end{align*}
 Since $F_N$ is constant, we have
\begin{align}\label{eq:consteq}
  \norm{F_N}{L^2(P)} = u_N= s_N = C_3 \lr{\delta_N^\Theta g_N}^2,
\end{align}
where the last equality is a valid assignment for $s_N$.

We now bound $I_N$ using $J_N$. Let $Q$ represent a distribution on $\mathcal{X} \times V$ with finite support. Apply $\norm{\cdot}{L^2(Q)}$ to \eqref{eq:recthat}. Then,
 \begin{align}\label{eq:qbound}
  \Vert f_\theta - f_{\theta'} \Vert_{L^2(Q)} \le 3C_1 C_2 \cdot g_N^2 \delta_N^\Theta d_\Theta(\theta, \theta'), \quad \theta, \theta' \in \Theta_{N, M'}.
 \end{align}
 Since $F_N$ is constant, we have $\norm{F_N}{L^2(Q)} = C_3 \lr{\delta_N^\Theta g_N}^2$. Rearranging yields
 \begin{align*}
  g_N^2 \delta_N^\Theta = \frac{\norm{F_N}{L^2(Q)}}{ C_3 \delta_N^\Theta}.
 \end{align*}
 Plugging the above display into \eqref{eq:qbound} yields
 \begin{align}\label{eq:jkl}
  \Vert f_\theta - f_{\theta'} \Vert_{L^2(Q)} \le  \frac{3C_1 C_2}{C_3} \cdot \frac{\norm{F_N}{L^2(Q)}}{ \delta_N^\Theta} d_\Theta(\theta, \theta'), \quad \theta, \theta' \in \Theta_{N, M'}.
 \end{align}
 As before, we obtain
 \begin{align*}
  I_N(t) \lesssim \frac{J_{N}\lr{t\delta_N^\Theta}}{\delta_N^\Theta},
 \end{align*}
 with constant independent of $t > 0$ and $N$. Since $\norm{F_N}{L^2(P)} = s_N$ by \eqref{eq:consteq}, we have $d_N = 1$, where we recall the definition \eqref{eq:ddef}. Hence, by the above display,
\begin{align}
  I_N\lr{d_N} = I_N(1) \lesssim \frac{J_{N}\lr{\delta_N^\Theta}}{\delta_N^\Theta}. \label{eq:preveq}
\end{align}
With $I$ and $II$ from \eqref{eq:firstmax} and \eqref{eq:secmax}, we have
\begin{align}\label{eq:ineqii}
  \Etheta \sup_{\theta \in \Theta_{N, M'}} \abs{\sum_{i = 1}^N \lr{\lvert \forward_{\truth, \theta}(X_i) \rvert_V^2 - \Vert \forward_{\truth, \theta} \Vert_{L^2}^2}}\lesssim \max\curly{I, II}.
\end{align}
 Using \eqref{eq:preveq}, $d_N = 1$ and \eqref{eq:consteq}, we obtain
 \begin{align*}
  I &\lesssim \sqrt{N}\lr{\delta_N^\Theta g_N}^2\frac{J_{N}\lr{\delta_N^\Theta}}{\delta_N^\Theta} = \sqrt{N} g_N^2\delta_N^\Theta J_{N}\lr{\delta_N^\Theta},\\
  II &\lesssim \lr{\delta_N^\Theta g_N}^2\lr{\frac{J_{N}\lr{\delta_N^\Theta}}{\delta_N^\Theta}}^2 = \lr{g_NJ_{N}\lr{\delta_N^\Theta}}^2.
 \end{align*}
 By Assumption 5-6, we have $I = o(1)$ and $II = o(1)$. Thus, \eqref{eq:seclimemp} holds by \eqref{eq:ineqii}.
\end{proof}

\begin{proof}[Justification for Remark~\ref{rem:lip}]
  Fix $M > 0$. Arguing as in \eqref{eq:asdin}, we have 
  \begin{align}\label{eq:remsuf}
    \sup_{\theta \in \regset} \norm{\forward\lr{\theta -  \lambda^\intercal \bs{h}_N }  - \forward\lr{\theta} }{L^2} \lesssim \norm{ \lambda^\intercal \bs{h}_N }{\h} \lesssim \frac{1}{\sqrt{N}}\norm{\perturb}{\h}.
  \end{align}
  We proceed to bound $\norm{\perturb}{\h}$.
  
  Let $\curly{v_i}_{i \le D}$ be an orthonormal eigenbasis of $\fishermatrix^{-1} : E_D \longrightarrow E_D$ with respect to $\ip{\cdot}{\cdot}{\h}$. Denote the eigenvalues by $\curly{\lambda_i}_{i\le D}$. By basis expansion of $P_{E_D}\psi_j$, we obtain
  \begin{align*}
    \pertur^j = \fishermatrix^{-1} P_{E_D}\psi_j = \sum_{i = 1}^D \lambda_i \ip{P_{E_D}\psi_j}{v_i}{\h}v_i.
  \end{align*}
  Thus,
  \begin{align*}
    \norm{\pertur^j}{\h}^2 = \sum_{i = 1}^D \lambda_i^2 \abs{\ip{P_{E_D}\psi_j}{v_j}{\h}}^2 \le \lr{\eigmax{\fishermatrix^{-1}}}^2 \norm{P_{E_D}\psi_j}{\h}^2.
  \end{align*}
  By contraction of projections and \eqref{eq:eiginv}, we now have
  \begin{align}\label{eq:bspec}
  \norm{\perturb}{\h} \le \eigmax{\fishermatrix^{-1}}\norm{P_{E_D}\bs{\psi}}{\mathbb{\h}} \le \frac{\norm{\bs{\psi}}{\mathbb{\h}}}{\eigmin{\fishermatrix}}.
  \end{align}
  Plugging this into \eqref{eq:remsuf} yields 
  \begin{align*}
    \sup_{\theta \in \regset} \norm{\forward\lr{\theta -  \lambda^\intercal \bs{h}_N }  - \forward\lr{\theta} }{L^2} \lesssim \frac{1}{\sqrt{N}\eigmin{\fishermatrix}}.
  \end{align*}
  Lastly, we arrive at our sufficient condition:
  \begin{align*}
    \frac{1}{\sqrt{N}\eigmin{\fishermatrix}} = o\lr{\delta_N^\forward} \Longleftrightarrow \sqrt{N}\delta_N^\forward\eigmin{\fishermatrix}    \longrightarrow \infty \Longleftrightarrow \sqrt{s_N} \eigmin{\fishermatrix} \longrightarrow \infty.
  \end{align*}
\end{proof}

\begin{proof}[Proof of Lemma~\ref{lem:lla}]
  The technique is similar to \cite[Proposition~4.1.6]{Nickl2023}. We consider $N$ large enough that $\mathcal{J}_D [\fishermatrix]^{-1} \mathcal{J}_D^\intercal \succ 0$ by Lemma~\ref{lem:boundinv}. Before starting, we set up some notation.
  
  Fix $\lambda \in \rk$ and $M > 0$. We understand $o$ and $o_P$ notation uniformly over $\theta \in \Theta_{N, M}$ as per \eqref{eq:unifop}. Let $\lesssim$ denote inequality up to a constant, independent of $N$ and $\theta \in \Theta_{N, M}$. We note down the following generic identities for later reference:
  \begin{align}\label{eq:vec1}
    \norm{x + y}{}^2 &= \norm{x}{}^2 + \norm{y}{}^2 + 2\ip{x}{y}{},\\\label{eq:vec2}
    \Vert x \Vert^2 - \Vert y \Vert^2 &= \Vert x - y \Vert^2 + 2 \langle x -y, y\rangle.
  \end{align}
  We define $\forward_{\theta_1, \theta_2} \coloneqq \forward_{\theta_2} - \forward_{\theta_1}$ for $\theta_1, \theta_2 \in \Theta$. The reader should familiarize themselves with Section~\ref{subsec:notation} on vectorized notation. Throughout the proof, we consider $\theta \in \regset$ and make the symbolic substitution $\theta' \coloneqq \theta -  \lambda^\intercal \bs{h}_N$.

  Since $Y_i - \forward_{\theta}(X_i) = \varepsilon_i - \forward_{\truth, \theta}(X_i)$, we have
  \begin{align}
    \ell_N(\theta) - \ell_N\lr{\theta'} &= \frac{1}{2}\sum_{i = 1}^N \norm{\varepsilon_i - \forward_{\truth, \theta'}(X_i)}{V}^2 - \norm{\varepsilon_i - \forward_{\truth, \theta}(X_i)}{V}^2.
  \end{align}
  Apply now \eqref{eq:vec1} to both terms inside the sum, and cancel the $\norm{\varepsilon_i}{V}^2$ terms. Then,
  \begin{align}\label{eq:mainexpansion}
    \ell_N(\theta) - \ell_N\lr{\theta'} = I + \frac{1}{2}(II - III),
  \end{align}
  where we now work on the following terms:
  \begin{align}
    I &= \sum_{i = 1}^N \left\langle \varepsilon_i,  \forward_{\truth, \theta}(X_i) - \forward_{\truth, \theta'}(X_i) \right\rangle_V, \label{eq:fii}\\
    II &= \sum_{i = 1}^N \left\Vert \forward_{\truth, \theta'}(X_i) \right\Vert^2_V,\\
    III &= \sum_{i = 1}^N  \left\Vert \forward_{\truth, \theta}(X_i) \right\Vert^2_V.
  \end{align}
  \textit{Term I}:
  Note 
  \begin{align}\label{eq:lindecompt}
    \forward_{\truth, \theta} = \lin [\theta - \truth] + R_\theta, \quad \theta \in \Theta.
  \end{align}
  Thus, 
  \begin{align*}
    \forward_{\truth, \theta} - \forward_{\truth, \theta' }  = R_{\theta} - R_{\theta' }  + \lin [ \lambda^\intercal \bs{h}_N].
  \end{align*}
  Plug the previous equation into \eqref{eq:fii} and apply \eqref{eq:emplimits1}. Then,
    \begin{align}
      I &= \sum_{i = 1}^N \ip{\varepsilon_i}{ \lin  [\lambda^\intercal \bs{h}_N] (X_i)}{V} + \sum_{i = 1}^N\ip{\varepsilon_i}{R_{\theta}(X_i)}{V} + \sum_{i = 1}^N\ip{\varepsilon_i}{R_{\theta' }(X_i)}{V}, \\ &= \lambda^\intercal S_N + o_P(1), \label{eq:subs1}
    \end{align}
    where $S_N \coloneqq \sum_{i = 1}^N \ip{\varepsilon_i}{\lin  \bs{h}_N (X_i)}{V}$. 
  \\\\
  \textit{Terms II-III}: By Lemma~\ref{lem:reginc}, we can find $M' > 0$ such that
  \begin{align*}
    \Theta_{N, M} -  \lambda^\intercal \bs{h}_N  
    \subset \Theta_{N, M'}, \quad \text{for all }N \in \mathbb{N}.
  \end{align*}
  Alternatively, with our implicit notation: $\theta' \in \Theta_{N, M'}$ for all $\theta \in \Theta_{N, M}$. By \eqref{eq:emplimits2}, we thus have
  \begin{align*}
    II &= N \norm{\forward_{\truth, \theta' }}{L^2}^2 + o_P(1),\\ 
    III &= N \norm{\forward_{\truth, \theta}}{L^2}^2 + o_P(1).
  \end{align*}
  Substitute the above displays and \eqref{eq:subs1} into \eqref{eq:mainexpansion}. Then,
    \begin{align}\label{eq:ff}
        \ell_N(\theta) - \ell_N\lr{\theta' } &=   \lambda^\intercal S_N   + \frac{N}{2}\cdot IV + o_P(1),
    \end{align} 
    where
    \begin{align}\label{eq:ivref}
      IV \coloneqq \left\Vert \forward_{\truth, \theta' } \right\Vert_{L^2}^2 - \norm{\forward_{\truth, \theta}}{L^2}^2.
    \end{align}
    We work on $\norm{\forward_{\truth, \theta'}}{L^2}^2$. Recalling \eqref{eq:lindecompt} and expanding by \eqref{eq:vec1}, we have
    \begin{align}\label{eq:forwarddiff}
      \norm{\forward_{\truth, \theta'}}{L^2}^2 &= \norm{\lin [\theta' - \truth]}{L^2}^2 + \norm{R_{\theta'}}{L^2}^2 + 2\ip{R_{\theta'}}{\lin [\theta' - \truth]}{L^2}.
    \end{align}
    We control the two latter terms. Applying Cauchy-Schwarz, and recalling Hypothesis~\ref{cond:4}, we have
    \begin{align*}
      \abs{\norm{R_{\theta'}}{L^2}^2 + 2\ip{R_{\theta'}}{\lin [\theta' - \truth]}{L^2} } &\le \norm{R_{\theta'}}{L^2}^2 + 2\norm{\lin}{\text{op}}\norm{R_{\theta'}}{L^2}\norm{\theta' - \truth}{L^2}, \\
      &\lesssim \sigma_N^2 + 2\sigma_N\delta_N^\h = o(1 / N).
    \end{align*}
    Here, the last equality follows by Assumption~1.
     Thus, \eqref{eq:forwarddiff} yields 
    \begin{align*}
      \Vert \forward_{\truth, \theta'}\Vert_{L^2}^2 = \Vert \lin [\theta' - \truth] \Vert^2_{L^2} + o(1 /N).
    \end{align*}
    Arguing similarly, we also have
    \begin{align*}
      \Vert \forward_{\truth, \theta}\Vert_{L^2}^2 = \Vert \lin [\theta - \truth] \Vert^2_{L^2} + o(1 /N).
    \end{align*}
    Plugging previous two displays into \eqref{eq:ivref}, we obtain
    \begin{align*}
      IV  = \norm{\lin [\theta'  - \truth]}{L^2}^2  - \norm{\lin [\theta - \truth]}{L^2}^2 + o(1 /N).
    \end{align*}
    Applying \eqref{eq:vec2}, we have
    \begin{align}\label{eq:exp}
      IV = \norm{\lin  \sq{\lambda^\intercal \bs{h}_N} }{L^2}^2 - 2 \ip{\lin [\theta-\truth]}{\lin   \sq{\lambda^\intercal \bs{h}_N} }{L^2} + o(1 /N). 
    \end{align}
    We work on the first term. Recall from the proof of Lemma~\ref{lem:asymptnorm} that 
    \begin{align}\label{eq:wn}
      W_N \coloneqq \frac{1}{\sqrt{N}}\renorm \ip{\varepsilon_1}{\lin \perturb(X_1)}{V} 
    \end{align}
    satisfies $\Vtheta W_N = I_k / N$, and consequently $\Vtheta[\lambda^\intercal W_N] = \norm{\lambda}{\rk}^2 / N$. We argue
    \begin{align*}
      \Vtheta\sq{\lambda^\intercal W_N} =  \norm{\lin  \sq{\lambda^\intercal \bs{h}_N} }{L^2}^2
    \end{align*}
    so that
    \begin{align}\label{eq:varcond}
      \norm{\lin  \sq{\lambda^\intercal \bs{h}_N} }{L^2}^2 = \frac{1}{N}\norm{\lambda}{\rk}^2.
    \end{align}
    Recall \eqref{eq:wn} and \eqref{eq:perdef}. By linearity, 
    \begin{align*}
      \lambda^\intercal W_N  = \ip{\varepsilon_1}{\lin  \sq{\lambda^\intercal \bs{h}_N } (X_1)}{V}.
    \end{align*}
    Thus, for all $x \in \mathcal{X}$, we have
    \begin{align*}
      \mathcal{L}\lr{\lambda^\intercal W_N \mid X_1 = x} = \mathcal{L}\lr{\ip{\varepsilon_1}{\lin  \sq{\lambda^\intercal \bs{h}_N} (x)}{V}} = \mathcal{N}\lr{0, \norm{\lin  \sq{\lambda^\intercal \bs{h}_N} (x)}{V}^2}.
    \end{align*}
    Applying the law of total variance, and recalling $X_1 \sim \lambda_\mathcal{X}$, we have
    \begin{align*}
      \Vtheta\sq{\lambda^\intercal W_N} &= \Etheta  \sq{\Vtheta\sq{\lambda^\intercal W_N \mid X_1}} + \Vtheta\sq{\Etheta \sq{\lambda^\intercal W_N\mid X_1}}, \\ &= \Etheta \norm{\lin  \sq{\lambda^\intercal \bs{h}_N} (X_1)}{V}^2, \\
      &= \norm{\lin  \sq{\lambda^\intercal \bs{h}_N} }{L^2}^2.
    \end{align*}
    Thus, \eqref{eq:varcond} holds. Plug \eqref{eq:varcond} into \eqref{eq:exp} and recall subsequently $\mathcal{I} = \lin^* \lin$. Then,
    \begin{align*}
      IV &= \frac{1}{N}\norm{\lambda}{\rk}^2 - 2 \ip{\lin [\theta-\truth]}{\lin   \sq{\lambda^\intercal \bs{h}_N} }{L^2} + o(1 /N), \\
      &= \frac{1}{N}\norm{\lambda}{\rk}^2 - 2 \ip{\theta-\truth}{\fisher \sq{\lambda^\intercal \bs{h}_N} }{L^2} + o(1 /N).
    \end{align*}
    Plug now the above display into \eqref{eq:ff} and recall $\theta' = \theta -  \lambda^\intercal \bs{h}_N$. Then,
    \begin{align*}
      \ell_N\left(\theta\right) - \ell_N(\theta -  \lambda^\intercal \bs{h}_N ) =\frac{1}{2}\Vert \lambda \Vert^2_{\rk} +  \lr{\lambda^\intercal S_N  -  N\ip{\theta - \truth}{\mathcal{I} \sq{\lambda^\intercal \bs{h}_N}}{\h}} +  o_P(1).
    \end{align*}
    It is clear that \eqref{eq:llr} follows, if we can show that the above bracket equals $-\lambda^\intercal Z_N(\theta) + o_P(1)$. That is,
    \begin{align*}
       \lambda^\intercal S_N  - N\ip{\theta - \truth}{\mathcal{I}\sq{ \lambda^\intercal \bs{h}_N } }{\h} = -\lambda^\intercal Z_N(\theta) + o_P(1),
    \end{align*}
    or equivalently,
    \begin{align}\label{eq:lts}
      \lambda^\intercal S_N  = -\lambda^\intercal Z_N(\theta) + N\ip{\theta - \truth}{\mathcal{I}\sq{ \lambda^\intercal \bs{h}_N } }{\h} + o_P(1).
    \end{align}
    We work on $\lambda^\intercal S_N$. Recalling \eqref{eq:perdef}, and using linearity,
    \begin{align*}
      S_N  =\sum_{i = 1}^N \ip{\varepsilon_i}{\lin  \bs{h}_N (X_i)}{V} = \frac{1}{\sqrt{N}}\renorm\sum_{i = 1}^N \ip{\varepsilon_i}{\lin \perturb(X_i)}{V}.
    \end{align*}
    Thus, recalling \eqref{eq:centering}, we have
    \begin{align}
      S_N
      = \sqrt{N} \renorm\lr{\hat{\Psi}_N - \Psi\truth} = \sqrt{N}\renorm\hat{\Psi}_N -\sqrt{N} \renorm\Psi\truth. \label{eq:brack}
    \end{align}
    By definition of $Z_N(\theta)$, we have $\sqrt{N}\renorm\hat{\Psi}_N = -Z_N(\theta) + \sqrt{N}\renorm\Psi \theta$. Thus,
    \begin{align*}
      S_N = -Z_N(\theta) + \sqrt{N}\renorm \Psi\lr{\theta - \truth}.
    \end{align*}
    Hence, we can write
    \begin{align}\label{eq:vdef}
      \lambda^\intercal S_N = -\lambda^\intercal Z_N(\theta) + V, \quad \text{where }V \coloneqq \sqrt{N}\lambda^\intercal \renorm\Psi  (\theta - \truth).
    \end{align}
    Observe now that \eqref{eq:lts} follows by \eqref{eq:vdef} if
    \begin{align}\label{eq:lastsuff}
      V = N\ip{\theta - \truth}{\mathcal{I} \sq{\lambda^\intercal \bs{h}_N} }{\h} + o(1).
    \end{align}
    We establish the above equality by working from left to right. 
    
    Recall the definition of $V$ from \eqref{eq:vdef}, and $\Psi = \ip{\bs{\psi}}{\cdot}{\h}$. By linearity,
    \begin{align*}
      V &= \sqrt{N}\lambda^\intercal \renorm \ip{ \theta - \truth}{ \bs{\psi}}{\h} = \sqrt{N}\ip{ \theta - \truth}{\lambda^\intercal \renorm\bs{\psi}}{\h}.
    \end{align*}
    Thus, letting
    \begin{align*}
      I' \coloneqq \sqrt{N}\ip{ \theta - \truth}{\lambda^\intercal \renorm \lr{ \bs{\psi} - P_{E_D}\bs{\psi} }}{\h},
    \end{align*}
    we have
    \begin{align}
      V = \sqrt{N}\ip{ \theta - \truth}{\lambda^\intercal \renorm P_{E_D}\bs{\psi}}{\h} + I'. \label{eq:firststep}
    \end{align}
    We now show $I' = o(1)$. Applying Cauchy-Schwarz, Lemma~\ref{lem:boundinv} and Hypothesis~\ref{cond:4},
    \begin{align*}
      \lvert I' \rvert &\lesssim \sqrt{N} \norm{\theta - \truth}{\h}   \norm{\renorm}{\text{op}} \norm{\bs{\psi} - P_{E_D}\bs{\psi}}{\h}, \\ 
      &\lesssim  \sqrt{N}\delta_N^\h \norm{\bs{\psi} - P_{E_D}\bs{\psi}}{\h} \longrightarrow 0.
    \end{align*}
    Here, we pass the limit by Assumption 3. Plugging $I' = o(1)$ into \eqref{eq:firststep}, we have
    \begin{align}\label{eq:rec12}
      V = \sqrt{N}\ip{ \theta - \truth}{\lambda^\intercal \renorm P_{E_D}\bs{\psi}}{\h} + o(1).
    \end{align}
    We now find an alternative expression for $\lambda^\intercal \renorm P_{E_D}\bs{\psi}$. Recall \eqref{eq:perdef} and $\perturb = \fishermatrix^{-1} P_{E_D}\bs{\psi}$. By linearity, we have
    \begin{align*}
      \sqrt{N}\fishermatrix  \sq{\lambda^\intercal \bs{h}_N}  =  \mathcal{I_D}\sq{\lambda^\intercal \renorm  \perturb }= \lambda^\intercal \renorm \mathcal{I_D} \perturb = \lambda^\intercal \renorm P_{E_D}\bs{\psi}.
    \end{align*}
    Substituting this into \eqref{eq:rec12}, 
    \begin{align*}
      V &= N\ip{ \theta - \truth}{\fishermatrix  \sq{\lambda^\intercal \bs{h}_N} }{\h} + o(1),\\
      &= N\ip{ \theta - P_{E_D}\truth}{\mathcal{I}  \sq{\lambda^\intercal \bs{h}_N} }{\h} + o(1).
    \end{align*}
    Here, the last equality follows by self-adjointness of $P_{E_D}$. Thus, letting
    \begin{align*}
      II' \coloneqq N\ip{ \truth - P_{E_D}\truth}{\mathcal{I}\sq{ \lambda^\intercal \bs{h}_N} }{\h},
    \end{align*}
    we have
    \begin{align*}
      V = N\ip{ \theta - \truth}{\mathcal{I} \sq{\lambda^\intercal \bs{h}_N} }{\h} + II' + o(1).
    \end{align*}
    We obtain \eqref{eq:lastsuff}, if we can show $II' = o(1)$. 
    
    By Cauchy-Schwarz,
    \begin{align}\label{eq:theabove}
      \lvert II' \rvert & \le N\norm{\mathcal{I} \sq{ \lambda^\intercal \bs{h}_N } }{\h}\norm{\truth - P_{E_D}\truth}{\h}. 
    \end{align}
    Recalling $\mathcal{I} = \lin^* \lin$, and using \eqref{eq:varcond}, we have
    \begin{align*}
      \norm{\mathcal{I} \sq{ \lambda^\intercal \bs{h}_N} }{\h} \le \norm{\lin^* }{\text{op}}\norm{\lin \sq{ \lambda^\intercal \bs{h}_N} }{L^2} = \frac{1}{\sqrt{N}}\norm{\lin^*}{\text{op}}\norm{\lambda}{\rk}.
    \end{align*}
    Plugging this into \eqref{eq:theabove} yields 
    \begin{align*}
      \lvert II' \rvert \lesssim \sqrt{N}\norm{\truth - P_{E_D}\truth}{\h} \longrightarrow 0.
    \end{align*}
    Here, we use Assumption~2 to pass the limit.
\end{proof}
\begin{lemma}\label{lem:interchangelims}
  Let $f_N, g_N: E_D \longrightarrow \mathbb{R}$ be measurable functions satisfying
  \begin{align*}
    \int_{A_N} e^{f_N} \, d\Pi_N < \infty \text{ and } \int_{A_N} e^{g_N} \, d\Pi_N < \infty,
  \end{align*}
  where $A_N \in \mathcal{B}\lr{E_D}$ for all $N \in \mathbb{N}$. Assume 
  \begin{align*}
    \sup_{\theta \in A_N} \abs{f_N(\theta) - g_N(\theta) } \longrightarrow 0.
  \end{align*}
  Then,
  \begin{align*}
    \int_{A_N} e^{f_N} \, d\Pi_N  = (1 + o(1))\int_{A_N} e^{g_N} \, d\Pi_N.
  \end{align*}
\end{lemma}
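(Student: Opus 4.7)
The plan is to sandwich $e^{f_N}$ between constant multiples of $e^{g_N}$ using the uniform closeness of $f_N$ and $g_N$ on $A_N$, then integrate.

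Set $\delta_N \coloneqq \sup_{\theta \in A_N}\abs{f_N(\theta) - g_N(\theta)}$, so $\delta_N \longrightarrow 0$ by hypothesis. For every $\theta \in A_N$ we have $-\delta_N \le f_N(\theta) - g_N(\theta) \le \delta_N$, and since $\exp$ is monotone,
\begin{align*}
  e^{-\delta_N} e^{g_N(\theta)} \le e^{f_N(\theta)} \le e^{\delta_N} e^{g_N(\theta)}, \quad \theta \in A_N.
\end{align*}
Both outer expressions are measurable and $\Pi_N$-integrable over $A_N$ by the finiteness assumption on $\int_{A_N}e^{g_N}\,d\Pi_N$, so integrating against $\Pi_N$ over $A_N$ gives
\begin{align*}
  e^{-\delta_N} \int_{A_N} e^{g_N}\, d\Pi_N \le \int_{A_N} e^{f_N}\, d\Pi_N \le e^{\delta_N} \int_{A_N} e^{g_N}\, d\Pi_N.
\end{align*}

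Since $\delta_N \longrightarrow 0$, continuity of $\exp$ yields $e^{\pm \delta_N} = 1 + o(1)$, and the claim $\int_{A_N}e^{f_N}\,d\Pi_N = (1 + o(1))\int_{A_N}e^{g_N}\,d\Pi_N$ follows. There is no real obstacle here; the only minor point is that the conclusion is stated as a multiplicative relation, which is only informative when $\int_{A_N}e^{g_N}\,d\Pi_N > 0$, but in the degenerate case where this integral vanishes the sandwich bound forces $\int_{A_N}e^{f_N}\,d\Pi_N = 0$ as well, so the identity $0 = (1 + o(1))\cdot 0$ holds trivially.
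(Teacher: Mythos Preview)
Your proof is correct and follows essentially the same approach as the paper: both arguments use the pointwise inequality $e^{-\delta_N}\le e^{f_N-g_N}\le e^{\delta_N}$ on $A_N$ and then pass to integrals. The paper phrases this as bounding the ratio $\abs{(I-II)/II}$ by $\max\{e^{a_N}-1,\,1-e^{-a_N}\}$, which is equivalent to your sandwich; your treatment of the degenerate case $\int_{A_N}e^{g_N}\,d\Pi_N=0$ is a nice touch that the paper leaves implicit.
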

\begin{proof}
  Let $I \coloneqq \int_{A_N} e^{f_N} \, d\Pi_N$ and $II \coloneqq \int_{A_N} e^{g_N} \, d\Pi_N$. We have to show $\abs{\frac{I -II}{II}} \longrightarrow 0$. Now,
  \begin{align*}
    \frac{I -II}{II} = \frac{\int_{A_N} e^{f_N} - e^{g_N} \, d\Pi_N}{\int_{A_N} e^{g_N} \, d\Pi_N}= \frac{\int_{A_N} e^{g_N}\lr{e^{f_N - g_N} - 1} \, d\Pi_N}{\int_{A_N} e^{g_N} \, d\Pi_N}.
  \end{align*}
  Let $a_N \coloneqq \sup_{\theta \in A_N} \abs{f_N(\theta) - g_N(\theta) } = o(1)$. Then,
  \begin{align*}
    \abs{\frac{I -II}{II}} &\le\frac{\int_{A_N} e^{g_N}\sup_{\theta \in A_N} \abs{e^{f_N(\theta) - g_N(\theta)} - 1} \, d\Pi_N}{\int_{A_N} e^{g_N} \, d\Pi_N}, \\ &= \sup_{\theta \in A_N} \abs{e^{f_N(\theta) - g_N(\theta)} - 1}, \\ &\le \max\curly{e^{a_N} - 1,1 - e^{-a_N}} \longrightarrow 0.
  \end{align*}
\end{proof}
\begin{proof}[Proof of Lemma~\ref{lem:prjbvm}]
  Before starting, we recall $M_0$ from Hypothesis~\ref{cond:posteriorcontraction}, which has the property
  \begin{align}\label{eq:fhyp}
    \Pi_N(\Theta_{N, M_0} \mid \mathcal{D}_N) = O_P(e^{-bs_N}).
  \end{align}
  We pick $M' \ge M_0$ from Lemma~\ref{lem:reginc} with $M = M_0$. We write $\bs{h}_N = (h_N^i)_{i \le k}$. \\\\
  \eqref{eq:distlim}:  Recall $c > 0$ from Hypothesis~\ref{cond:posteriorcontraction}. Let $A_N \coloneqq \cap_{i = 1}^k A_{N, i}$, where 
  \begin{align}\label{eq:tndef}
    A_{N, i} \coloneqq   \curly{\theta \in E_D :  \abs{\ip{\theta}{h_N^i}{\pspace}} \le  2\sqrt{c s_N}\norm{h_N^i}{\pspace}},
  \end{align}
  and define $\bar{\Theta}_{N} \coloneqq  \Theta_{N, M'} \cap A_N$. We first argue that \eqref{eq:distlim} is equivalent to
  \begin{align}\label{eq:condlimit}
    \mathcal{L}^{\bar{\Theta}_N} \lr{Z_N \mid \mathcal{D}_N} &\dlim \mathcal{N}(0, I_k), \quad \text{in }\Ptheta\text{-probability}.
  \end{align}
  By the triangle inequality, it suffices to show 
  \begin{align}\label{eq:distregset}
    d_{\text{BL}}\lr{\mathcal{L}\lr{Z_N \mid \mathcal{D}_N}, \mathcal{L}^{\bar{\Theta}_N} \lr{Z_N \mid \mathcal{D}_N}} \plim 0.
  \end{align}
  Let $d_{\text{TV}}$ denote the total variation metric. Then,
  \begin{align*}
    d_{\text{BL}}\lr{\mathcal{L}\lr{Z_N \mid \mathcal{D}_N}, \mathcal{L}^{\bar{\Theta}_N} \lr{Z_N \mid \mathcal{D}_N}}  \le d_{\text{TV}}\lr{\mathcal{L}\lr{Z_N \mid \mathcal{D}_N}, \mathcal{L}^{\bar{\Theta}_N} \lr{Z_N \mid \mathcal{D}_N}}.
  \end{align*}
  Using that $d_{\text{TV}}$ is a contraction under pushforwards,
  \begin{align*}
    d_{\text{TV}}\lr{\mathcal{L}\lr{Z_N \mid \mathcal{D}_N}, \mathcal{L}^{\bar{\Theta}_N} \lr{Z_N \mid \mathcal{D}_N}} \le d_{\text{TV}}\lr{\Pi_N(\cdot \mid \mathcal{D}_N), \Pi^{\bar{\Theta}_N}_N(\cdot \mid \mathcal{D}_N)}.
  \end{align*}
  One quickly confirms that $\Pi_N^{\bar{\Theta}_N}(\cdot \mid \mathcal{D}_N)$ from \eqref{eq:condpos}, is the posterior $\Pi_N(\cdot \mid \mathcal{D}_N)$ from \eqref{eq:oripos}, where we replace $\Pi_N$ by 
  \begin{align*}
    \Pi^{\bar{\Theta}_N}_N(A) \coloneqq \frac{\Pi_N(A \cap \bar{\Theta}_N)}{\Pi_N(\bar{\Theta}_N)}, \quad A \in \mathcal{B}\lr{E_D}.
  \end{align*}
  Thus, applying \cite[p.~142]{Vaart_1998}, we have
  \begin{align}\label{eq:tvineq}
    d_{\text{TV}}\lr{\Pi_N(\cdot \mid \mathcal{D}_N), \Pi^{\bar{\Theta}_N}_N(\cdot \mid \mathcal{D}_N)} \le 2 \Pi_N(\bar{\Theta}_N^c \mid \mathcal{D}_N).
  \end{align}
  Hence, to establish \eqref{eq:distregset}, it certainly suffices to show
  \begin{align}\label{eq:expcontract}
    \posterior{\bar{\Theta}_N^c}  = O_P\lr{e^{-bs_N}}.
  \end{align}
  
  By sub-additivity,
  \begin{align*}
    \posterior{\bar{\Theta}_N^c} \le \posterior{\Theta_{N, M'}^c} + \posterior{A_N^c}.
  \end{align*}
  Using $M_0 \le M'$ and subsequently \eqref{eq:fhyp}, we have
  \begin{align*}
    \posterior{\Theta_{N, M'}^c} \le \posterior{\Theta_{N, M_0}^c} = O_P\lr{e^{-bs_N}}.
  \end{align*}
  Furthermore, from Hypothesis~\ref{cond:posteriorcontraction}, we can establish
  \begin{align*}
    \posterior{A_N^c} = O_P\lr{e^{-bs_N}} 
  \end{align*}
  by showing $\Pi_N\lr{A_N^c} \le e^{-cs_N}$ for $N$ large enough. 
  
  Recall $A_N \coloneqq \cap_{i = 1}^k A_{N, i}$. Clearly,
  \begin{align}\label{eq:decompofan}
    \Pi_N\lr{A_N^c} \le \sum_{i = 1}^k \Pi_N(A_{N, i}^c).
  \end{align}
  Let $i \le k$. By definition of a Gaussian measure with identity covariance operator,
  \begin{align*}
    \ip{\theta_N}{h_N^i}{\pspace} \sim \mathcal{N}\lr{0, \norm{h_N^i}{\pspace}^2}.
  \end{align*}
  Let $\tilde{\Phi}$ denote the distribution function of $\abs{Z}$ where $Z \sim \mathcal{N}(0, 1)$. Recalling \eqref{eq:tndef}, the above display yields
  \begin{align*}
    \Pi_N\lr{A_{N, i}} = \begin{cases} 
      1, & \text{if } h_N^i = 0, \\
      \tilde{\Phi}\left( 2\sqrt{c s_N} \right), & \text{if } h_N^i \neq 0.
  \end{cases}
  \end{align*}
  Furthermore, by the Gaussian tail bound \cite[p.~37]{Giné_Nickl_2021}, we have 
  \begin{align*}
    1 - \tilde{\Phi}\left( 2\sqrt{c s_N} \right) \le 2e^{-2cs_N}.
  \end{align*}
  Hence,
  \begin{align*}
    \Pi_N\lr{A_{N, i}^c} \le 2e^{-2c s_N}.
  \end{align*}
Thus, recalling \eqref{eq:decompofan}, 
  \begin{align*}
    \Pi_N\lr{A_N^c} \le 2k e^{-2c s_N} \le e^{-c s_N}, 
  \end{align*}
  where the last inequality holds for $N$ large enough.
  
  We proceed to show \eqref{eq:condlimit}. As noted in \cite[Section~7.6]{Nickl_2020}, it suffices to show
  \begin{align}\label{eq:moment}
    M_N(\lambda) \coloneqq \mathbb{E}^{\bar{\Theta}_N}\sq{\exp\lr{\lambda^\intercal Z_N }\mid \mathcal{D}_N} \plim \exp\lr{\frac{1}{2}\Vert \lambda\Vert^2_{\rk}}, \quad \text{for all }\lambda \in \rk.
  \end{align}
  Fix $\lambda \in \rk$. The definition \eqref{eq:condpos} states 
  \begin{align*}
    \frac{d\Pi_N^{\bar{\Theta}_N}(\cdot \mid \mathcal{D}_N)}{d\Pi_N} = \frac{\mathbbm{1}_{\bar{\Theta}_N}e^{\ell_N}}{\int_{\bar{\Theta}_N} e^{\ell_N}\, d\Pi_N} .
  \end{align*}
  Thus,
  \begin{align*}
    M_N(\lambda)  &= \int \exp\lr{\lambda^\intercal Z_N(\theta)} d\Pi_N^{\bar{\Theta}_N}(\theta \mid \mathcal{D}_N), \\&= \frac{\int_{\bar{\Theta}_N}\exp\lr{\lambda^\intercal Z_N(\theta) + \ell_N(\theta)} \, d\Pi_N(\theta)}{\int_{\bar{\Theta}_N} \exp(\ell_N(\theta))\, d\Pi_N(\theta)}.
  \end{align*}
  For notational purposes, we note $1 + o(1) = \exp(o(1))$. Applying Lemma~\ref{lem:lla} and Lemma~\ref{lem:interchangelims},
  \begin{align}
    M_N(\lambda) = \frac{\int_{\bar{\Theta}_N} \exp\lr{\likelihood{\theta - \hperturb}} \, d\Pi_N(\theta)}{\int_{\bar{\Theta}_N} \exp(\ell_N(\theta))\, d\Pi_N(\theta)}\exp\left(\Vert \lambda\Vert^2_{\rk}/2 + o_P(1) \right). \label{eq:ratio2}
  \end{align}
  By the Cameron-Martin formula \cite[Theorem~2.6.13]{Giné_Nickl_2021},
  \begin{align}\label{eq:shift}
    \int_{\bar{\Theta}_N} \exp\lr{\likelihood{\theta - \hperturb}} \, d\Pi_N(\theta) = \int_{\bar{\Theta}_N -  \lambda^\intercal \bs{h}_N } \exp\left(\ell_N(\theta) - R_N(\theta)  \right) \, d\Pi_N(\theta),
  \end{align}  
where 
\begin{align*}
  R_N(\theta) \coloneqq \ip{\theta}{ \lambda^\intercal \bs{h}_N }{\pspace} +  \frac{1}{2}\norm{ \lambda^\intercal \bs{h}_N }{\pspace}^2, \quad \theta \in E_D.
\end{align*}
We argue 
\begin{align}\label{eq:rlimmit}
  \sup\curly{\abs{R(\theta)} : \theta \in \bar{\Theta}_N - \hperturb} \longrightarrow 0 ,
\end{align}
so that we can apply Lemma~\ref{lem:interchangelims}. Now,
\begin{align}
  \sup_{\theta \in \bar{\Theta}_N}\abs{R(\theta)} &= \sup_{\theta \in \bar{\Theta}_N}\abs{R\lr{\theta - \hperturb}},\\ 
  &= \sup_{\theta \in \bar{\Theta}_N}\abs{ \ip{\theta}{ \lambda^\intercal \bs{h}_N }{\pspace} - \frac{1}{2}\norm{ \lambda^\intercal \bs{h}_N }{\pspace}^2},
  \\ &\le  \sup_{\theta \in \bar{\Theta}_N}\abs{\ip{\theta}{ \lambda^\intercal \bs{h}_N }{\pspace}} + \frac{1}{2}\norm{ \lambda^\intercal \bs{h}_N }{\pspace}^2. \label{eq:lasteqr}
\end{align}
Noting $\bar{\Theta}_N \subset A_N$, and recalling \eqref{eq:tndef}, we have
\begin{align}
  \sup_{\theta \in \bar{\Theta}_N} \abs{\ip{\theta}{ \lambda^\intercal \bs{h}_N }{\pspace}} &\le  \sum_{i = 1}^k \abs{\lambda_i} \sup_{\theta \in A_N} \abs{\ip{\theta}{h_N^i}{\pspace}},  \\ &\le  2 \sqrt{c s_N} \sum_{i = 1}^k \abs{\lambda_i} \norm{h_N^i}{\pspace}, \\
  &\lesssim \sqrt{s_N} \norm{\bs{h}_N}{\pspace}. \label{eq:rterm1}
\end{align}
We plug \eqref{eq:rterm1} into \eqref{eq:lasteqr} and use $s_N \ge 1$:
\begin{align}
  \sup\curly{\abs{R(\theta)} : \theta \in \bar{\Theta}_N - \hperturb} 
  &\lesssim \sqrt{s_N}\norm{\bs{h}_N}{\pspace} + \norm{\bs{h}_N}{\pspace}^2, \\ &\lesssim \sqrt{s_N}\norm{\bs{h}_N}{\pspace} + \lr{\sqrt{s_N}\norm{\bs{h}_N}{\pspace}}^2. \label{eq:wepinto}
\end{align}
We now show $\sqrt{s_N}\norm{\bs{h}_N}{\pspace} \longrightarrow 0$. Arguing as in \eqref{eq:asdin}, 
\begin{align*}
  \norm{\bs{h}_N}{\pspace} \lesssim \frac{1}{\sqrt{N}} \norm{\perturb }{\pspace}.
\end{align*}
Thus, recalling $s_N = N\lr{\delta_N^\forward}^2$, it holds that
\begin{align}\label{eq:argasin}
  \sqrt{s_N}\norm{\bs{h}_N}{\pspace} \lesssim \sqrt{s_N}\cdot \frac{1}{\sqrt{N}} \norm{\perturb }{\pspace} = \delta_N^\forward \norm{\perturb }{\pspace} \longrightarrow 0,
\end{align}
where we pass the limit using Assumption~1. The bound \eqref{eq:wepinto} thus goes to zero, so that \eqref{eq:rlimmit} holds. 

Applying Lemma~\ref{lem:interchangelims} to the right-hand side of \eqref{eq:shift}, we obtain
\begin{align*}
  \int_{\bar{\Theta}_N} \exp\lr{\likelihood{\theta - \hperturb}} \, d\Pi_N(\theta) &= (1 + o(1))\int_{\bar{\Theta}_N -  \lambda^\intercal \bs{h}_N }\exp\lr{\ell_N(\theta)} \, d\Pi_N(\theta). 
\end{align*}
We plug this into \eqref{eq:ratio2}:
\begin{align*}
  M_N(\lambda) = \frac{\int_{\bar{\Theta}_N -  \lambda^\intercal \bs{h}_N }\exp\lr{\ell_N(\theta)} \, d\Pi_N(\theta)}{\int_{\bar{\Theta}_N} \exp(\ell_N(\theta))\, d\Pi_N(\theta)}\exp\lr{\frac{1}{2}\Vert \lambda\Vert^2_{\rk} + o_P(1)}.
\end{align*}
Dividing the numerator and denominator by $\int_{E_D} e^{\ell_N} \,  d\Pi_N$ and recalling \eqref{eq:oripos}, we have
  \begin{align*}
    M_N(\lambda) = \frac{\Pi_N\lr{\bar{\Theta}_N -  \lambda^\intercal \bs{h}_N  \mid \mathcal{D}_N}}{\Pi_N\lr{\Bar{\Theta}_N \mid \mathcal{D}_N}} \exp\lr{\frac{1}{2}\Vert \lambda\Vert^2_{\rk} + o_P(1)}.
  \end{align*}
  Recall that we wish to show \eqref{eq:moment}. Since $\Pi_N\lr{\Bar{\Theta}_N \mid \mathcal{D}_N} \plim 1$ by \eqref{eq:expcontract}, it now suffices to show 
  \begin{align}\label{eq:thussuff}
    \posterior{\bar{\Theta}_N -  \lambda^\intercal \bs{h}_N } \plim 1.
  \end{align}
  Since $\bar{\Theta}_N = \Theta_{N, M'} \cap A_N$, we have
  \begin{align*}
    \bar{\Theta}_N -  \lambda^\intercal \bs{h}_N = \lr{\Theta_{N, M'} - \lambda^\intercal \bs{h}_N} \cap \lr{ A_N -  \lambda^\intercal \bs{h}_N}.
  \end{align*}
  By construction, we have $ \Theta_{N, M'} -  \lambda^\intercal \bs{h}_N \supset \Theta_{N,M_0}$ for $N$ large enough. Coupling this with the above display,
  \begin{align*}
    \bar{\Theta}_N -  \lambda^\intercal \bs{h}_N \supset \Theta_{N,M_0}\cap \lr{ A_N -  \lambda^\intercal \bs{h}_N}, \quad \text{for } N \text{ large enough}.
  \end{align*}
  Recalling \eqref{eq:fhyp}, we have $\Pi_N(\Theta_{N, M_0} \mid \mathcal{D}_N) \plim 1$. Thus, we can establish \eqref{eq:thussuff} by showing 
  \begin{align*}
    \posterior{A_N -  \lambda^\intercal \bs{h}_N  } \plim 1.
  \end{align*}
  For this, it suffices to show
  \begin{align}\label{eq:onehalf}
    \frac{1}{2}A_N  \subset A_N -  \lambda^\intercal \bs{h}_N, \quad \text{for $N$ large enough},
  \end{align}
  since $\posterior{\frac{1}{2}A_N } \plim 1$. Indeed, the latter limit follows by the same argument we used to establish $\Pi_N(A_N \mid \mathcal{D}_N) \plim 1$. Here, we simply replace $2\sqrt{c s_N}$ with $\sqrt{c s_N}$ in \eqref{eq:tndef}.
  
  To establish \eqref{eq:onehalf}, suppose $x \in \frac{1}{2}A_N$. Then, by definition of $A_N$,
  \begin{align*}
    \abs{\ip{x}{h^i_N }{\pspace}} \le \sqrt{c s_N}\norm{h^i_N}{\pspace}, \quad i \le k.
  \end{align*}
  Using the triangle inequality, the above inequality, and Cauchy-Schwarz,
  \begin{align}
    \abs{\ip{x + \lambda^\intercal \bs{h}_N}{h^i_N}{\pspace} }&\le \abs{\ip{x}{h^i_N }{\pspace}}  + \abs{\ip{\lambda^\intercal \bs{h}_N}{h^i_N}{\pspace}},\\
    &= \sqrt{c s_N}\norm{h^i_N}{\pspace} + \norm{\hperturb}{\pspace}\norm{h^i_N}{\pspace}. \label{eq:pluggingthisinto}
  \end{align}
  Since $s_N \ge 1$, we have 
  \begin{align*}
    \norm{\hperturb}{\pspace} \lesssim \norm{\bs{h}_N}{\pspace} \lesssim \sqrt{s_N}\norm{\bs{h}_N}{\pspace} \longrightarrow 0,
  \end{align*}
  where we use \eqref{eq:argasin} to pass the limit. In particular, $\norm{\hperturb}{\pspace} \le \sqrt{c s_N}$ for $N$ large enough. Plugging this into \eqref{eq:pluggingthisinto}, we have
  \begin{align*}
    \abs{\ip{x + \lambda^\intercal \bs{h}_N}{h^{i}_N}{\pspace} } \le 2\sqrt{c s_N}\norm{h^{i}_N}{\pspace}, \quad \text{for } i \le k \text{ and }N\text{ large enough}.
  \end{align*}
  This is the statement: $x \in A_N -  \lambda^\intercal \bs{h}_N $ for $N$ large enough. Thus, \eqref{eq:onehalf} holds.
  \\\\
  \eqref{eq:expp} and \eqref{eq:var}: We first argue that the following conditions are sufficient: 
  \begin{align}\label{eq:fflim}
    \mathbb{E}[\lambda^\intercal Z_N \mid \mathcal{D}_N] &\plim 0, \\\label{eq:sslim}
    \mathbb{E}[(\lambda^\intercal Z_N)^2 \mid \mathcal{D}_N] &\plim \norm{\lambda}{\rk}^2, \quad \text{for all } \lambda \in \rk.
  \end{align}
  
  Indeed, assume that \eqref{eq:fflim} and \eqref{eq:sslim} hold. Then, \eqref{eq:expp} follows by plugging $\lambda = \bs{e}_i$ into \eqref{eq:fflim} for all $i \le k$. We note that \eqref{eq:var} is equivalent to
  \begin{align}\label{eq:bnconv}
    B_N(\bs{e}_i, \bs{e}_j) \plim \delta_{ij}, \quad \text{for all }i,j \le k, 
  \end{align}
  where $\delta_{ij}$ is the Kronecker delta, and
  \begin{align}\label{eq:bndef}
    B_N(\lambda, \lambda') \coloneqq \lambda^\intercal \mathbb{V}[Z_N \mid \mathcal{D}_N] \lambda', \quad \lambda, \lambda' \in \rk.
  \end{align}
  Now, using \eqref{eq:fflim} and \eqref{eq:sslim}, we have
  \begin{align}\label{eq:onearg}
    B_N(\lambda, \lambda) = \mathbb{V}[\lambda^\intercal Z_N \mid \mathcal{D}_N] = \Epost{\lr{\lambda^\intercal Z_N}^2} - \lr{\mathbb{E}[\lambda^\intercal Z_N \mid \mathcal{D}_N]}^2 \plim \norm{\lambda}{\rk}^2.
  \end{align}
  By bilinearity and symmetry, we have the polarization identity
  \begin{align}\label{eq:billinearity}
    B_N(\lambda, \lambda')  = \frac{1}{4} \lr{B_N(\lambda + \lambda', \lambda + \lambda') - B_N(\lambda - \lambda', \lambda - \lambda')}.
  \end{align}
  Thus, \eqref{eq:onearg} yields
  \begin{align}\label{eq:translates}
    B_N(\lambda, \lambda') \plim& \frac{1}{4} \lr{\norm{\lambda + \lambda'}{\rk}^2 - \norm{\lambda- \lambda'}{\rk}^2}.
  \end{align}
  Noting the polarization identity, 
  \begin{align*}
    \lambda^\intercal \lambda' = \frac{1}{4} \lr{\norm{\lambda + \lambda'}{\rk}^2 - \norm{\lambda- \lambda'}{\rk}^2},
  \end{align*}
  \eqref{eq:translates} states
  \begin{align*}
    B_N(\lambda, \lambda') \plim \lambda^\intercal \lambda'.
  \end{align*}
  Plugging in $\lambda = \bs{e}_i$ and $\lambda' = \bs{e}_j$, we obtain \eqref{eq:bnconv}.
  
  Fix $\lambda \in \rk$. For \eqref{eq:fflim} and \eqref{eq:sslim}, we start by showing
  \begin{align}
    \mathbb{E}^{\bar{\Theta}_N}\sq{(\lambda^\intercal Z_N)^2 \bigl\vert \mathcal{D}_N} &\plim   \Vert \lambda \Vert_{\rk}^2, \label{eq:f2} \\
    \mathbb{E}^{\bar{\Theta}_N}\sq{\lambda^\intercal Z_N \mid \mathcal{D}_N} &\plim 0. \label{eq:f1}
  \end{align}
   We later 'remove' the conditioning on $\bar{\Theta}_N$, by showing $I = o_P(1)$ and $II = o_P(1)$, where
   \begin{align}\label{eq:ssqq}
     I &\coloneqq \mathbb{E}^{\bar{\Theta}_N}[(\lambda^\intercal Z_N)^2 \mid \mathcal{D}_N] - \mathbb{E}[(\lambda^\intercal Z_N)^2 \mid \mathcal{D}_N],\\\label{eq:ssqq1}
     II &\coloneqq \mathbb{E}^{\bar{\Theta}_N}[\lambda^\intercal Z_N \mid \mathcal{D}_N] - \mathbb{E}[\lambda^\intercal Z_N \mid \mathcal{D}_N].
   \end{align}
   
   Using Remark~\ref{rem:subseqence} on \eqref{eq:distlim} and \eqref{eq:moment}, we can WLOG assume that 
  \begin{align}
    \mathcal{L}^{\bar{\Theta}_N} \lr{Z_N \mid \mathcal{D}_N} &\dlim \mathcal{N}(0, I_k),\label{eq:lawconv1}\\
    M_N(\lambda) + M_N(-\lambda) &\longrightarrow 2 \exp\lr{\frac{1}{2}\norm{\lambda}{\rk}^2}, \label{eq:subsec}
  \end{align}
  hold on some $\Omega' \in \mathcal{A}$ with $\Ptheta(\Omega') = 1$. 
  
  Fix $\omega \in \Omega'$. Construct a probability space $(\mathcal{Y}, \mathcal{F}, Q)$ with random variables:
  \begin{enumerate}
    \item $W_N \sim \mathcal{L}^{\bar{\Theta}_N} \lr{\lambda^\intercal Z_N \mid \mathcal{D}_N = \mathcal{D}_N(\omega)}$ for $N \in \mathbb{N}$,
    \item $W \sim \mathcal{N}\lr{0, \norm{\lambda}{\rk}^2}$.
  \end{enumerate}
  Note $W_N \dlim W$ under $Q$ by \eqref{eq:lawconv1}. We show $\sup_{N \in \mathbb{N}} \mathbb{E}_Q W_N^4 < \infty$, which will allow us to argue by uniform integrability. By Taylor expansion, $x^4 \le 12(e^{x} + e^{-x})$ for $x \in \mathbb{R}$. Thus,
  \begin{align}\label{eq:uniint}
    \mathbb{E}_Q W_N^4 \le 12 \lr{  \mathbb{E}_Q\exp\lr{W_N} + \mathbb{E}_Q\exp\lr{-W_N}}.
  \end{align}
  By construction,
  \begin{align*}
    \mathcal{L}\lr{\exp\lr{\pm W_N}} &= \mathcal{L}^{\bar{\Theta}_N} \lr{\exp\lr{ \pm \lambda^\intercal Z_N} \mid \mathcal{D}_N = \mathcal{D}_N(\omega)}.
  \end{align*}
  Thus, recalling definition \eqref{eq:moment} and \eqref{eq:subsec}, we have
   \begin{align*}
    \mathbb{E}_Q\exp\lr{W_N} + \mathbb{E}_Q\exp\lr{-W_N}&= M_N(\lambda)(\omega) + M_N(-\lambda)(\omega) \longrightarrow 2 \exp\lr{\frac{1}{2}\norm{\lambda}{\rk}^2}.
   \end{align*}
   In particular, 
   \begin{align*}
    \sup_{N \in \mathbb{N}} \mathbb{E}_Q\exp\lr{W_N} + \mathbb{E}_Q\exp\lr{-W_N} < \infty,
   \end{align*}
    which implies $\sup_{N \in \mathbb{N}} \mathbb{E}_Q W_N^4  < \infty$ by \eqref{eq:uniint}. The corollary of \cite[Theorem~25.12]{Billingsley1995-ly} on uniform integrability now yields
  \begin{align*}
    \mathbb{E}_QW_N  &\longrightarrow \mathbb{E}_QW,\\ 
    \mathbb{E}_Q W_N^2  &\longrightarrow \mathbb{E}_Q W^2.
  \end{align*}
  By construction, the above display translates to
  \begin{align*}
    \mathbb{E}^{\bar{\Theta}_N}[\lambda^\intercal Z_N \mid \mathcal{D}_N = \mathcal{D}_N(\omega)]  &\longrightarrow  0,\\ 
    \mathbb{E}^{\bar{\Theta}_N}[(\lambda^\intercal Z_N)^2 \mid \mathcal{D}_N = \mathcal{D}_N(\omega)]   &\longrightarrow  \Vert \lambda \Vert_{\rk}^2.
  \end{align*}
  Since $\Ptheta (
    \Omega') = 1$, we have \eqref{eq:f1} and \eqref{eq:f2} almost surely, and hence also in $\Ptheta$-probability.

  We now show $I = o_P(1)$, where we recall \eqref{eq:ssqq}. By definition, $\lambda^\intercal Z_N = I' - II'$, where 
  \begin{align}
    I' &\coloneqq \lambda^\intercal \renorm\Psi \lr{\theta_N - \truth}, \label{eq:iprimedef}\\
    II' &\coloneqq \lambda^\intercal \renorm \lr{\hat{\Psi}_N - \Psi \truth}.
  \end{align} 
  Define the random signed measure $\mu_N \coloneqq \Pi^{\bar{\Theta}_N}_N(\cdot \mid \mathcal{D}_N) - \Pi_N(\cdot \mid \mathcal{D}_N)$.
  Then,
  \begin{align}
    I = \int_{E_D} \lr{\lambda^\intercal Z_N}^2 \, d\mu_N &= \int_{E_D} \lr{I' - II'}^2 \, d\mu_N, \\ &= \int_{E_D} \left(I' \right)^2 \, \mu_N +\int_{E_D} \left(II' \right)^2 \, \mu_N - 2 II' \int_{E_D} I' \, d\mu_N. \label{eq:ofrom}
  \end{align}
  Since $II'$ is a statistic, and $\mu_N$ is a difference of posteriors, we have 
  \begin{align*}
    \int_{E_D} \left(II' \right)^2 \, d\mu_N = 0.
  \end{align*}
  Furthermore, $II' = O_P\lr{\sqrt{N}}$ from Lemma~\ref{lem:asymptnorm}. Applying the latter two considerations to \eqref{eq:ofrom}, we have
  \begin{align*}
    I = \int_{E_D} \left(I' \right)^2 \, \mu_N + O_P\left(\sqrt{N} \right)\int_{E_D} I' \, d\mu_N.
  \end{align*}
  Clearly, we have $I = o_P(1)$, if we can show 
  \begin{align}\label{eq:iprimeintegral}
    \int_{E_D} \left(I' \right)^n \, d\mu_N = o_P\lr{1 / \sqrt{N}}, \quad \text{for }n \in \{1,2\}.
  \end{align}
  Now,
  \begin{align}\label{eq:decint}
    \int_{E_D} \left(I' \right)^n \, d\mu_N = \int_{\bar{\Theta}_N} \left(I' \right)^n \, d\mu_N + \int_{\bar{\Theta}_N^c} \left(I' \right)^n \, d\mu_N.
  \end{align}
  We show that each term is $o_P\lr{1 / \sqrt{N}}$. For the first term,
  \begin{align}\label{eq:termbound}
    \left\lvert \int_{\bar{\Theta}_N} \left(I' \right)^n \, d\mu_N \right\rvert \le \norm{(I')^n}{L^\infty(\bar{\Theta}_N)}\norm{\mu_N}{\text{TV}}.
  \end{align}
  Recall \eqref{eq:iprimedef}. By $\norm{\Psi}{\text{op}} < \infty$, and Lemma~\ref{lem:boundinv}, we have
  \begin{align}\label{eq:bbound}
    \abs{I'} \le C \norm{\theta_N - \truth}{\h}, \quad \text{for some constant }C > 0.
  \end{align}
  Employing the contraction property of $\bar{\Theta}_N \subset \Theta_{N, M'}$ and recalling \eqref{eq:tvineq}-\eqref{eq:expcontract}, we get
  \begin{align*}
    \norm{(I')^n}{L^\infty(\bar{\Theta}_N)} \lesssim \left(\delta^\h_N \right)^n \text{ and } \norm{\mu_N}{\text{TV}} = O_P\lr{e^{-bs_N}}.
  \end{align*}
  Applying the above display to \eqref{eq:termbound}, we have
  \begin{align}\label{eq:shown}
    \left\lvert \int_{\bar{\Theta}_N} \left(I' \right)^n \, d\mu_N \right\rvert = O_P\left(\left(\delta^\h_N \right)^n e^{-bs_N} \right) = o_P\left(\frac{1}{\sqrt{N}} \right),
  \end{align}
  where the last equality follows by polynomial to exponential comparison.

  For the other term in \eqref{eq:decint}, we note
  \begin{align*}
    \int_{\bar{\Theta}_N^c} \left(I' \right)^n \, d\mu_N &= \int_{\bar{\Theta}_N^c} \left(I' \right)^n \, d\Pi_N^{\bar{\Theta}_N}(\cdot \mid \mathcal{D}_N) -\int_{\bar{\Theta}_N^c} \left(I' \right)^n \, d\Pi_N(\cdot \mid \mathcal{D}_N), \\ &= -\int_{\bar{\Theta}_N^c} \left(I' \right)^n \, d\Pi_N(\cdot \mid \mathcal{D}_N).
  \end{align*}
  By Cauchy-Schwarz and \eqref{eq:expcontract}, we now have
  \begin{align}
    \abs{\int_{\bar{\Theta}_N^c} \left(I' \right)^n \, d\mu_N} &= \abs{\int_{\bar{\Theta}_N^c} \left(I' \right)^n \, d\Pi_N(\cdot \mid \mathcal{D}_N)}, \\
    &\le \lr{\Pi_N(\bar{\Theta}_N^c \mid \mathcal{D}_N)\mathbb{E}\left[\left( I'\right)^{2n} \Big| \mathcal{D}_N \right]}^{\frac{1}{2}}, \\&=
     \lr{O_P\lr{e^{-bs_N}}\mathbb{E}\left[\left( I'\right)^{2n} \Big| \mathcal{D}_N \right]}^{\frac{1}{2}}.\label{eq:opbound}
  \end{align}
  Recall that we wish to show 
  \begin{align*}
    \int_{\bar{\Theta}_N^c} \left(I' \right)^n \, d\mu_N = o_P\lr{\frac{1}{\sqrt{N}}}.
  \end{align*}
  By \eqref{eq:opbound}, it suffices to show 
  \begin{align*}
    \mathbb{E}\left[\left( I'\right)^{2n} \Big| \mathcal{D}_N \right] = o_P\lr{\frac{1}{N}e^{bs_N}}.
  \end{align*}
  Fix $\eta > 0$, and let $\eta_N \coloneqq \eta e^{bs_N} / N$. We then need to show
  \begin{align*}
    \Ptheta\left(\abs{\mathbb{E}\left[\left( I'\right)^{2n} \Big| \mathcal{D}_N \right]} > \eta_N \right) \longrightarrow 0.
  \end{align*}
  Note 
  \begin{align}\label{eq:rerref}
    \mathbb{E}\left[\left( I'\right)^{2n} \Big| \mathcal{D}_N \right] = \frac{\int_{E_D}\left( I'\right)^{2n}e^{\ell_N} \, d\Pi_N}{\int_{E_D}e^{\ell_N} \, d\Pi_N} = \frac{\int_{E_D}\left( I'\right)^{2n}e^{\ell_N - \ell_N(\truth)} \, d\Pi_N}{\int_{E_D}e^{\ell_N - \ell_N(\truth)} \, d\Pi_N}.
  \end{align}
  Let 
  \begin{align*}
    H_N \coloneqq \left\{ \int_{E_D} e^{\ell_N - \ell_N(\truth)} \, d\Pi_N > e^{-a s_N} \right \}.
  \end{align*}
  Since $\Ptheta \left(H_N^c \right) \longrightarrow 0$ by Hypothesis~\ref{cond:posteriorcontraction},
  \begin{align*}
    \Ptheta\left(\abs{\mathbb{E}\left[\left( I'\right)^{2n} \Big| \mathcal{D}_N \right]} > \eta_N\right) &\le \Ptheta\left(\left\{\abs{\mathbb{E}\left[\left( I'\right)^{2n} \Big| \mathcal{D}_N \right]} > \eta_N \right\} \cap H_N \right) + o(1).
  \end{align*}
  It now suffices to show
  \begin{align*}
    III' \coloneqq \Ptheta\left(\left\{\abs{\mathbb{E}\left[\left( I'\right)^{2n} \Big| \mathcal{D}_N \right]} > \eta_N \right\} \cap H_N \right) \longrightarrow 0.
  \end{align*}
  By bounding the denominator on the right-hand side of \eqref{eq:rerref} using the defining property of $H_N$, we obtain
  \begin{align}\label{eq:iiinclusion}
    \left\{\abs{\mathbb{E}\left[\left( I'\right)^{2n} \Big| \mathcal{D}_N \right]} > \eta_N \right\} \cap H_N  \subset \left\{ \int_{E_D}\left( I'\right)^{2n}e^{\ell_N - \ell_N(\truth)} \, d\Pi_N > \frac{\eta}{N} e^{(b-a)s_N} \right\}.
  \end{align} 
  We now wish to apply Markov's inequality to the latter set. By Tonelli's Theorem,
  \begin{align}
    \Etheta\sq{ \int_{E_D}\left( I'\right)^{2n}e^{\ell_N - \ell_N(\truth)} \, d\Pi_N } &= \int_{E_D} \left( I'\right)^{2n} \Etheta\sq{e^{\ell_N - \ell_N(\truth)}} \, d\Pi_N. \label{eq:statess}
  \end{align}
  Now,
  \begin{align*}
    \Etheta\sq{e^{\ell_N - \ell_N(\truth)}} = \int_{\mathcal{X} \times V} e^{\ell_N - \ell_N(\truth)} e^{\ell_N(\truth)} \, d\nu= \int_{\mathcal{X} \times V} e^{\ell_N(\truth)} \, d\nu = 1,
  \end{align*}
  where $\nu$ is the reference measure of the likelihood. Thus, \eqref{eq:statess} states
  \begin{align*}
    \Etheta\sq{ \int_{E_D}\left( I'\right)^{2n}e^{\ell_N - \ell_N(\truth)} \, d\Pi_N } = \int_{E_D} \left( I'\right)^{2n} \, d\Pi_N = \mathbb{E}^\Pi\sq{\left( I'\right)^{2n}}.
  \end{align*}
  Applying Markov's inequality to the larger set in \eqref{eq:iiinclusion}, and using the above display,
  \begin{align}\label{eq:pppbound}
    III' \le \frac{N}{\eta}e^{-(b - a)s_N}\mathbb{E}^\Pi\sq{\left( I'\right)^{2n}}.
  \end{align}
  Recalling \eqref{eq:bbound}, we have
  \begin{align*}
    \mathbb{E}^\Pi\sq{\left( I'\right)^{2n}} \le 1 + \mathbb{E}^\Pi\sq{\left( I'\right)^{4}} \lesssim 1 + \norm{\theta_0}{\h}^{4} + \mathbb{E}^\Pi \norm{\theta_N}{\h}^{4}.
  \end{align*}
  Thus, \eqref{eq:pppbound} yields
  \begin{align*}
    III' &\lesssim Ne^{-(b - a)s_N}\lr{1 + \norm{\theta_0}{\h}^{4} + \mathbb{E}^\Pi \norm{\theta_N}{\h}^{4} }, \\ &=  Ne^{-(b - a)s_N}\mathbb{E}^\Pi \norm{\theta_N}{\h}^{4} + Ne^{-(b - a)s_N}\lr{1 + \norm{\theta_0}{\h}^{4}},\\
    & \longrightarrow 0.
  \end{align*}
  Here, we pass the limit by Assumption~2 for the first term, and exponential to polynomial comparison for the second term.

  Lastly, we argue $II = o_P(1)$. Recall \eqref{eq:ssqq1}. Since $\lambda^\intercal Z_N = I' - II'$, and $II'$ is a statistic,
  \begin{align*}
    II &= \mathbb{E}^{\bar{\Theta}_N}[I' - II' \mid \mathcal{D}_N] - \mathbb{E}[ I' - II' \mid \mathcal{D}_N],\\ 
    &=\mathbb{E}^{\bar{\Theta}_N}[I'\mid \mathcal{D}_N] - \mathbb{E}[ I' \mid \mathcal{D}_N], \\ 
    &= \int_{E_D} I' \, d\mu_N .
  \end{align*}
  Thus, $II = o_P(1)$ follows, since we have already established \eqref{eq:iprimeintegral}.
\end{proof}
\begin{proof}[Proof of Theorem~\ref{thm:rnm}]
  \hspace*{0.1cm}\\\\
  \eqref{eq:finalplim}: By \eqref{eq:expp} and linearity,
  \begin{align}
  \mathbb{E}\sq{Z_N \mid \mathcal{D}_N} &= \sqrt{N}i_{D}^{1/2} \Epost{\Psi \theta_N - \Psi_N} \\ &= \sqrt{N}i_{D}^{1/2} \lr{\hat{\Psi}_N - \Psi_N} \plim 0.\label{eq:eun}
  \end{align}
  Coupling this with Lemma~\ref{lem:asymptnorm} and Slutsky's lemma, we obtain \eqref{eq:finalplim}.
  \\\\
  \eqref{eq:finaldlim}: By Remark~\ref{rem:subseqence}, \eqref{eq:eun}, and \eqref{eq:distlim}, we can WLOG assume that
  \begin{align*}
  \mathcal{L}\lr{\sqrt{N}i_{D}^{1/2}\lr{\Psi\theta_N-\Psi_N} \mid \mathcal{D}_N} &\dlim \mathcal{N}(0, I_k),\\
  \sqrt{N}i_{D}^{1/2}\lr{\hat{\Psi}_N - \Psi_N} &\longrightarrow 0,
  \end{align*}
  hold on some $\Omega' \in \mathcal{A}$ with $\Ptheta(\Omega') = 1$. 
  
  Fix $\omega \in \Omega'$. Construct $W_N \sim \Pi^\Psi_N(\cdot \mid \mathcal{D}_N = \mathcal{D}_N(\omega))$ for all $N \in \mathbb{N}$ on a common probability space $(\mathcal{Y}, \mathcal{F}, Q)$. Then,
  \begin{align*}
    \sqrt{N}\renorm \lr{W_N - \Psi_N(\omega)} &\dlim \mathcal{N}(0, I_k), \quad \text{under }Q, \\
    \sqrt{N}i_{D}^{1/2}\lr{\hat{\Psi}_N(\omega) - \Psi_N(\omega)} &\longrightarrow 0.
  \end{align*}
  By Slutsky's lemma, we conclude 
  \begin{align*}
    \sqrt{N}\renorm \lr{W_N - \hat{\Psi}_N(\omega)} \dlim \mathcal{N}(0, I_k).
  \end{align*}
  The above limit translates to
  \begin{align*}
    \mathcal{L}\lr{\sqrt{N}i_{D}^{1/2}\lr{\Psi\theta_N-\hat{\Psi}_N} \mid \mathcal{D}_N = \mathcal{D}_N(\omega)} &\dlim \mathcal{N}(0, I_k).
  \end{align*}
  Since $\Ptheta(\Omega') = 1$, we have the limit \eqref{eq:finaldlim} almost surely, and hence also in $\Ptheta$-probability. 
\end{proof}
\begin{proof}[Proof of Theorem~\ref{thm:recov}]
  We first establish $i_D^{-1} \longrightarrow L$. Coupling \eqref{eq:finalAD} and \eqref{eq:coup}, we have
  \begin{align*}
    \lr{i_{D}^{-1}}_{ij} = \ip{\proj\psi_i}{\fishermatrix^{-1}\proj\psi_j}{\h} = \ip{P_{E_D}\psi_i}{\bar{\psi}_D^j}{\h}, \quad i,j \le k.
  \end{align*}
  Using \eqref{eq:pswap}, we then have
  \begin{align*}
    \lr{i_{D}^{-1}}_{ij}   =  \ip{\psi_i}{\bar{\psi}_D^j}{\h}.
  \end{align*}
  Substitute now $\psi_i = \mathcal{I}\phi_i$ and apply self-adjointness of $\mathcal{I}$. Then,
  \begin{align*}
    \lr{i_{D}^{-1}}_{ij} =\ip{\mathcal{I}\phi_i}{\bar{\psi}_D^j}{\h} = \ip{\phi_i}{\mathcal{I}\bar{\psi}_D^j}{\h}.
  \end{align*}
  Writing $\phi_i = P_{E_D}\phi_i + \lr{\phi_i - P_{E_D}\phi_i }$, we now have
  \begin{align*}
    \lr{i_{D}^{-1}}_{ij} &= \ip{P_{E_D}\phi_i}{\mathcal{I}\bar{\psi}_D^j}{\h} + \ip{\phi_i - P_{E_D}\phi_i}{\mathcal{I} \bar{\psi}_D^j}{\h}.
  \end{align*}
  By self-adjointness of $P_{E_D}$, and convergence of projections,
  \begin{align*}
    \ip{P_{E_D}\phi_i}{\mathcal{I}\bar{\psi}_D^j}{\h} = \ip{\phi_i}{\fishermatrix\bar{\psi}_D^j}{\h} = \ip{\phi_i}{P_{E_D}\psi_j}{\h} \longrightarrow \ip{\phi_i}{\psi_j}{\h} = L_{ij}.
  \end{align*}
  Thus, it suffices to show
  \begin{align*}
    I \coloneqq \ip{\phi_i - P_{E_D}\phi_i}{\mathcal{I} \bar{\psi}_D^j}{\h} \longrightarrow 0.
  \end{align*}
  For this, Cauchy-Schwarz, \eqref{eq:bspec}, and Assumption~2 yield
  \begin{align*}
    \abs{I} \lesssim \norm{\phi_i - P_{E_D}\phi_i}{\h} \norm{\bar{\psi}_D^j}{\h}
    \lesssim \frac{\norm{\phi_i - P_{E_D}\phi_i}{\h}}{\eigmin{\fishermatrix}}   \longrightarrow 0.
  \end{align*}

  We now show $L \succ 0$. For all $x \in \rk$, we need to show
  \begin{align*}
    \sum_{i = 1}^k\sum_{j = 1}^k  x_iL_{ij}  x_j = 0 \text{ implies }x = 0.
  \end{align*}
  Recalling $\mathcal{I} = \lin^* \lin$, we have
  \begin{align*}
    L_{ij} = \ip{\phi_i}{\psi_j}{\h} = \ip{\phi_i}{\fisher \phi_j}{\h} = \ip{\lin \phi_i}{\lin \phi_j}{\h}. 
  \end{align*}
  Hence, 
  \begin{align*}
    \sum_{i = 1}^k\sum_{j = 1}^k  x_iL_{ij}  x_j = \left\Vert \lin  \left[\sum_{i = 1}^k x_i \phi_i \right] \right\Vert_{\h}^2.
  \end{align*}
  The claim now follows by Assumption~3, if we can show that $\phi_1, \ldots, \phi_k$ are linearly independent. Suppose $\sum_{i = 1}^k a_i \phi_i = 0$, where $a_1, \ldots, a_k \in \mathbb{R}$. Applying the information operator, $\sum_{i = 1}^k a_i \psi_i = 0$. Thus, $a_1 = \ldots = a_k = 0$, since $\psi_1, \ldots, \psi_k$ are linearly independent.

  For \eqref{eq:sem2} and \eqref{eq:sem1}, we transform \eqref{eq:finaldlim} and \eqref{eq:finalplim} by $i_D^{-\frac{1}{2}} \longrightarrow L^{\frac{1}{2}}$ using Slutsky's lemma. For \eqref{eq:sem2}, we formally have to argue by Remark~\ref{rem:subseqence}.
\end{proof}
\section*{Proofs for Section~\ref{sec:cred}}
\begin{proof}[Proof of Lemma~\ref{lem:pivot}]
  We start by showing
  \begin{align*}
    \Ptheta\lr{\variance \in \text{GL}_k(\mathbb{R})} \longrightarrow 1.
  \end{align*}
  Since $\hat{\Psi}_N$ is a statistic, 
  \begin{align*}
    \Vpost{\Psi \theta_N - \hat{\Psi}_N} = \Vpost{\Psi \theta_N} = \hat{\Sigma}_N.
  \end{align*}
  Thus, 
  \begin{align*}
    \mathbb{V}\sq{Z_N \mid \mathcal{D}_N} = N\renorm \Vpost{\Psi \theta_N - \hat{\Psi}_N}  \renorm = N\renorm \variance \renorm.
  \end{align*}
  Hence, \eqref{eq:var} yields 
  \begin{align}\label{eq:covlimit}
    N\renorm\variance\renorm \plim I_k.
  \end{align}
  Since $\text{GL}_k(\mathbb{R})$ is open, we have
  \begin{align*}
    \Ptheta\lr{N\renorm\variance\renorm \in \text{GL}_k(\mathbb{R})} \longrightarrow 1.
  \end{align*}
  As $\renorm$ is invertible, we obtain
  \begin{align*}
    N\renorm\variance\renorm \in \text{GL}_k(\mathbb{R}) \Longleftrightarrow \variance \in \text{GL}_k(\mathbb{R}).
  \end{align*}
  Hence, we also have
  \begin{align*}
    \Ptheta\lr{\variance \in \text{GL}_k(\mathbb{R})}  \longrightarrow 1.
  \end{align*}
  \eqref{eq:pivot1}: Using Remark~\ref{rem:subseqence} on \eqref{eq:covlimit} and \eqref{eq:finaldlim}, we can WLOG assume that
  \begin{align}
    \mathcal{L}\lr{\sqrt{N}i_{D}^{1/2} \lr{\Psi\theta_N-\hat{\Psi}_N} \mid \mathcal{D}_N } &\dlim \mathcal{N}(0, I_k),\label{eq:distlimit} \\
    N\renorm\variance\renorm &\longrightarrow I_k, \label{eq:varlim}
  \end{align}
  hold on some $\Omega' \in \mathcal{A}$ with $\Ptheta(\Omega') = 1$. 
  
  Fix $\omega \in \Omega'$. Construct $W_N \sim \Pi_N^\Psi(\cdot \mid \mathcal{D}_N = \mathcal{D}_N(\omega))$ for all $N \in \mathbb{N}$ on a common probability space $(\mathcal{Y}, \mathcal{F}, Q)$. By \eqref{eq:distlimit}, we have under $Q$ that
  \begin{align}\label{eq:anlim}
    \sqrt{N}\renorm A_N \dlim \mathcal{N}(0, I_k), \quad \text{where }A_N \coloneqq W_N - \hat{\Psi}_N(\omega).
  \end{align}
  Applying now $\norm{\cdot}{\rk}^2$, we obtain
  \begin{align}\label{eq:chiklim}
    NA_N^\intercal i_D A_N \dlim \chi_k^2, \quad \text{under } Q.
  \end{align}
  Let $o_Q(1)$ denote a sequence of $\mathbb{R}$-valued random variables on $(\mathcal{Y}, \mathcal{F}, Q)$ that converges to zero in $Q$-probability. We write $\bs{o}_Q(1)$ for $\rk$-valued random variables. 
  
  If we can show
  \begin{align}\label{eq:weshowthat}
    A_N^\intercal \variance^{-1}(\omega) A_N = N A_N^{\intercal} i_D A_N + o_Q(1),
  \end{align}
  then by \eqref{eq:chiklim} and Slutsky's lemma
  \begin{align*}
    A_N^\intercal \variance^{-1}(\omega) A_N \dlim \chi_k^2, \quad \text{under }Q.
  \end{align*}
  Since $\Ptheta(\Omega') = 1$, we then have \eqref{eq:pivot1} almost surely, and hence in $\Ptheta$-probability. 
  
  To show \eqref{eq:weshowthat}, we write
  \begin{align*}
    A_N^\intercal \variance^{-1}(\omega) A_N = \qform{\sqrt{N}\renorm A_N}{\lr{\frac{1}{N}i_D^{-\frac{1}{2}}\variance^{-1}(\omega) i_D^{-\frac{1}{2}}}}{\sqrt{N}\renorm A_N}.
  \end{align*}
  Since matrix inversion is continuous, inverting \eqref{eq:varlim} yields
  \begin{align*}
    \frac{1}{N}i_D^{-\frac{1}{2}}\variance^{-1}(\omega) i_D^{-\frac{1}{2}} = I_k + E_N,
  \end{align*}
  where $(E_N) \subset \rkbyk$ satisfies $E_N \longrightarrow 0$. Thus, 
  \begin{align}
    A_N^\intercal \variance^{-1}(\omega) A_N &= \qform{\sqrt{N}\renorm A_N}{\lr{I_k + E_N}}{\sqrt{N}\renorm A_N}, \\
    &= N A_N^\intercal i_D A_N + \qform{\sqrt{N}\renorm A_N}{E_N}{\sqrt{N}\renorm A_N}.\label{eq:rightperror}
  \end{align}
  We now see that \eqref{eq:weshowthat} is equivalent to
  \begin{align*}
    \qform{\sqrt{N}\renorm A_N}{E_N}{\sqrt{N}\renorm A_N} = o_Q(1).
  \end{align*}
  By \eqref{eq:anlim}, we have
  \begin{align*}
    E_N \sqrt{N}\renorm A_N &= \bs{o}_Q(1), \\
    \lr{\sqrt{N}\renorm A_N}^\intercal \bs{o}_Q(1) &= o_Q(1).
  \end{align*}
  Thus,
  \begin{align*}
    \qform{\sqrt{N}\renorm A_N}{E_N}{\sqrt{N}\renorm A_N} = \lr{\sqrt{N}\renorm A_N}^\intercal \bs{o}_Q(1) = o_Q(1),
  \end{align*}
  as desired.
  \\\\
  \eqref{eq:pivot2}: This follows by a near-identical argument. Indeed, applying $\norm{\cdot}{\rk}^2$ to \eqref{eq:finalplim}, we have
  \begin{align*}
    N\qform{\hat{\Psi}_N - \Psi \truth}{i_D}{\hat{\Psi}_N - \Psi \truth} \dlim \chi_k^2.
  \end{align*}
  Similarly to before, we can then argue by $Ni_D^{\frac{1}{2}}\variance^{-1} i_D^{\frac{1}{2}} \plim I_k$ and Slutsky's lemma.
\end{proof}
\begin{proof}[Proof of Theorem~\ref{thm:crd}]
  We draw inspiration from \cite[Section~4.1.3]{Nickl2023}.
  \\\\
  Case~\ref{en:case1}: Define
  \begin{align*}
    B_N \coloneqq \qform{\hat{\Psi}_N - \Psi \theta_0}{\variance^{-1}}{\hat{\Psi}_N - \Psi \theta_0},
  \end{align*}
  which satisfies $ B_N \dlim \chi_k^2$ by \eqref{eq:pivot2}. Let $F_{\chi_k^2}$ denote the distribution function of $\chi_k^2$. Then,
  \begin{align}\label{eq:asin}
    \Ptheta(\Psi \truth \in C_N) = \Ptheta\lr{B_N \le Q_{\chi_k^2}(1 - \alpha)} \longrightarrow F_{\chi_k^2}\lr{Q_{\chi_k^2}(1 - \alpha)} = 1 - \alpha.
  \end{align}
  \\
  Case~\ref{en:case2}: By Remark~\ref{rem:subseqence}, assume WLOG that
  \begin{align}\label{eq:lawconv}
    \mathcal{L}\lr{\qform{\Psi \theta_N - \hat{\Psi}_N}{\variance^{-1}}{\Psi \theta_N - \hat{\Psi}_N} \mid \mathcal{D}_N} \dlim \chi_k^2
  \end{align}
  holds on some $\Omega' \in \mathcal{A}$ with $\Ptheta\lr{\Omega'} = 1$. Fix $\omega \in \Omega'$, and let $F_N$ denote the distribution function of
  \begin{align*}
    \mathcal{L}\lr{\qform{\Psi \theta_N - \hat{\Psi}_N}{\variance^{-1}}{\Psi \theta_N - \hat{\Psi}_N} \mid \mathcal{D}_N = \mathcal{D}_N(\omega)}.
  \end{align*}
  Now, regarding \eqref{eq:lawconv} at $\omega$ yields
  \begin{align}\label{eq:distconv}
    F_N \stackrel{d}{\longrightarrow} F_{\chi_k^2},
  \end{align}
  in the sense of weak convergence of the corresponding probability measures. By construction, we also have 
  \begin{align*}
    F_N\lr{ R_N(\omega)} = 1 - \alpha.
  \end{align*}
  Thus, Theorem~\ref{lem:conv} and \eqref{eq:distconv} yield the 'convergence of quantiles', 
  \begin{align*}
    R_N(\omega) \longrightarrow Q_{\chi_k^2}(1-\alpha).
  \end{align*}
  Since $\Ptheta(\Omega') = 1$, the previous limit holds almost surely. In particular,
  \begin{align}\label{eq:rnlim}
    R_N \plim Q_{\chi_k^2}(1-\alpha). 
  \end{align} 
  Let $a_N \coloneqq Q_{\chi^2_k}(1 - \alpha)/R_N$. Then,
  \begin{align*}
    \Ptheta(\Psi \truth \in C_N) = \Ptheta\lr{B_N \le R_N} = \Ptheta\lr{a_N B_N \le Q_{\chi^2_k}(1 - \alpha)}.
  \end{align*}
  Since $a_N \plim 1$, Slutsky's lemma yields $a_NB_N \dlim \chi_k^2$. Thus,
  \begin{align*}
    \Ptheta(\Psi \truth \in C_N) = \Ptheta\lr{a_N B_N \le Q_{\chi^2_k}(1 - \alpha)} \longrightarrow F_{\chi_k^2}\lr{Q_{\chi_k^2}(1 - \alpha)} = 1 - \alpha.
  \end{align*}
  \\
  For the remainder of this proof, we argue universally for Case~\ref{en:case1} and Case~\ref{en:case2}. We proceed to establish \eqref{eq:diambound}. Since $C_N$ is an ellipsoid,
  \begin{align*}
    \text{diam}\lr{C_N} = 2\sqrt{R_N\lambda_{\text{max}}(\variance)}.
  \end{align*}
  Recalling \eqref{eq:rnlim}, we have $R_N = (1 + o_P(1)) Q_{\chi^2_k}(1 - \alpha)$ in both cases. We thus need to show
  \begin{align*}
    \lambda_{\text{max}}(\variance) = (1 + o_P(1))\lambda_{\text{max}}\lr{\frac{1}{N}i_D^{-1}}.
  \end{align*}
  Equivalently, we can show
  \begin{align}\label{eq:lammax}
    \lambda_{\text{max}}(N \variance) = (1 + o_P(1))\lambda_{\text{max}}\lr{i_D^{-1}}.
  \end{align}
  We have $N\renorm\variance\renorm \plim I_k$, see \eqref{eq:covlimit}. Hence, we can write 
  \begin{align*}
    N\renorm\variance\renorm = I_k + C_N,
  \end{align*}
  where $(C_N) \subset \rkbyk$ satisfies $C_N \plim 0$. Thus,
  \begin{align}\label{eq:hatref}
    N\variance  =  i_D^{-\frac{1}{2}} \lr{N \renorm\variance\renorm} i_D^{-\frac{1}{2}} =  i_D^{-\frac{1}{2}}(I_k + C_N)i_D^{-\frac{1}{2}} = i_D^{-1} + i_D^{-\frac{1}{2}}C_N i_D^{-\frac{1}{2}}.
  \end{align}
  Recall $\norm{C_N}{\text{op}} \plim 0$ and $\norm{i_D^{-\frac{1}{2}}}{\text{op}}^2 = \eigmax{i_D^{-1}}$ by \eqref{eq:eigsq}. Then,
  \begin{align}
    \norm{i_D^{-\frac{1}{2}}C_N i_D^{-\frac{1}{2}}}{\text{op}} &\le \norm{C_N}{\text{op}}\norm{i_D^{-\frac{1}{2}}}{\text{op}}^2, \\ &= \norm{C_N}{\text{op}}\eigmax{i_D^{-1}}, \\ &=  o\lr{\eigmax{i_D^{-1}}}.\label{eq:obound}
  \end{align}
  Now, \eqref{eq:lammax} is equivalent to
  \begin{align*}
    \frac{\abs{\eigmax{N \variance} - \eigmax{i_D^{-1}}}}{\eigmax{i_D^{-1}}} \plim 0.
  \end{align*}
  Recall \eqref{eq:hatref}. Since $\lambda_{\text{max}}$ is Lipschitz continuous on the space of symmetric $k\times k$ matrices \cite[Corollary~6.3.8]{Horn_Johnson_2012}, we obtain
  \begin{align*}
    \abs{\eigmax{N \variance} - \eigmax{i_D^{-1}}} \lesssim \norm{N\variance - i_D^{-1}}{\text{op}} = \norm{i_D^{-\frac{1}{2}}C_N i_D^{-\frac{1}{2}}}{\text{op}}.
  \end{align*}
  Hence, recalling \eqref{eq:obound}, we have
  \begin{align*}
    \frac{\abs{\eigmax{N \variance} - \eigmax{i_D^{-1}}}}{\eigmax{i_D^{-1}}} \lesssim  \frac{\norm{i_D^{-\frac{1}{2}}C_N i_D^{-\frac{1}{2}}}{\text{op}}}{\eigmax{i_D^{-1}}}\plim 0.
  \end{align*}

  We proceed to show
  \begin{align*}
    \text{diam}\lr{C_N} = \frac{1 + o_P(1)}{\sqrt{N\eigmin{\fishermatrix}}}.
  \end{align*}
  By \eqref{eq:diambound} and \eqref{eq:eiginv}, it suffices to show $\eigmax{i_D^{-1}} \lesssim \eigmax{\fishermatrix^{-1}}$. We argue similarly to Lemma~\ref{lem:boundinv}. Let $x \in S^{k - 1}$, where $S^{k - 1} \coloneqq \curly{y \in \rk : \norm{y}{\rk} = 1}$. Note that for $D$ large enough, we have
  \begin{align}\label{eq:takesup}
    x^\intercal i_D^{-1} x = x^\intercal\mathcal{J}_D[\fishermatrix]^{-1}\mathcal{J}_D^\intercal x = I \cdot II,
  \end{align}
  where 
  \begin{enumerate}
    \item $I \coloneqq y^\intercal [\fishermatrix]^{-1} y$ and $y \coloneqq \mathcal{J}_D^\intercal x / \norm{\mathcal{J}_D^\intercal x}{\mathbb{R}^D}$,
    \item $II \coloneqq \norm{\mathcal{J}_D^\intercal x}{\mathbb{R}^D}^2$.
  \end{enumerate}
  Clearly, $I \le \eigmax{\fishermatrix^{-1}}$ and $II = x^\intercal \mathcal{J}_D \mathcal{J}_D^\intercal  x \le \eigmax{\mathcal{J}_D \mathcal{J}_D^\intercal}$. Apply the bounds for $I$ and $II$ to \eqref{eq:takesup}, and take the supremum over $x \in S^{k -1}$. Then,
  \begin{align*}
    \eigmax{i_D^{-1}} \le \eigmax{\mathcal{J}_D \mathcal{J}_D^\intercal} \eigmax{\fishermatrix^{-1}}.
  \end{align*}
  Arguing as in Lemma~\ref{lem:boundinv}, we have
  \begin{align*}
    \eigmax{\mathcal{J}_D \mathcal{J}_D^\intercal} \longrightarrow \eigmax{P},
  \end{align*}
  where $P \coloneqq \lr{\ip{\psi_i}{\psi_j}{\h}}_{i,j \le k}$. In particular, 
  \begin{align*}
    \sup_{D \in \mathbb{N}} \eigmax{\mathcal{J}_D \mathcal{J}_D^\intercal} < \infty.
  \end{align*}
\end{proof}
\begin{proof}[Proof of Corollary~\ref{cor:int}]
  Recall $k = 1$. Clearly, $\variance$ and $\hat{\Psi}_N$ are scalar-valued, and
  \begin{align*}
    I_N = \curly{x \in \mathbb{R}: \lr{x - \hat{\Psi}_N}\variance^{-1} \lr{x - \hat{\Psi}_N} \le  \variance^{-1} R_N^2 }.
  \end{align*}
  Hence, the statistic $\variance^{-1} R_N^2$ is a valid choice for Case~\ref{en:case2} in Theorem~\ref{thm:crd}.
\end{proof}
\section*{Proofs for Section~\ref{sec:darcy}}
\begin{lemma}\label{lem:convproj}
Let $\beta \in \mathbb{N}$. Then,
\begin{enumerate}
  \item If $\phi \in h^\beta(\mathcal{X})$, then $\Vert \phi - P_{E_D}\phi \Vert_{L^2} = o\lr{D^{-\frac{\beta}{d}}}$,
  \item If $\phi \in C_{0, \Delta}^\infty(\mathcal{X})$, then $\Vert \phi - P_{E_D}\phi \Vert_{L^2} = o\lr{D^{-r}}$ for all $r > 0$,
  \item If $\phi \in h^\beta(\mathcal{X})$, then $\Vert \phi \Vert_{h^\beta} \lesssim D^{\frac{\beta}{d}}\Vert \phi \Vert_{L^2}$, with constant independent of $D$ and $\phi \in E_D$.
\end{enumerate}
\end{lemma}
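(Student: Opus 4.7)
The plan is to argue entirely in the eigenbasis $\{e_i\}$, using Weyl's law $\lambda_i \asymp i^{2/d}$ for the negative Dirichlet Laplacian on the smooth domain $\mathcal{X}$, together with integration by parts to control $C^\infty_{0,\Delta}$-functions.

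For Claim 3, given $\phi = \sum_{i \le D} a_i e_i \in E_D$, Parseval gives $\Vert \phi \Vert_{L^2}^2 = \sum_{i \le D} |a_i|^2$ and the definition of $h^\beta$ gives $\Vert \phi \Vert_{h^\beta}^2 = \sum_{i \le D}|a_i|^2 \lambda_i^\beta$. Since the eigenvalues are ascending, $\lambda_i \le \lambda_D$ for $i \le D$, so $\Vert \phi \Vert_{h^\beta}^2 \le \lambda_D^\beta \Vert \phi \Vert_{L^2}^2$. Weyl's law then yields $\lambda_D^\beta \lesssim D^{2\beta/d}$, proving the bound with constant independent of $D$ and $\phi$.

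For Claim 1, expand $\phi = \sum_i \langle \phi, e_i \rangle_{L^2} e_i$ so that $\phi - P_{E_D}\phi = \sum_{i > D}\langle \phi, e_i \rangle_{L^2} e_i$. Writing $|\langle \phi, e_i \rangle_{L^2}|^2 = |\langle \phi, e_i \rangle_{L^2}|^2 \lambda_i^\beta \cdot \lambda_i^{-\beta}$ and using that $\lambda_i \ge \lambda_{D+1}$ for $i > D$,
\begin{align*}
  \Vert \phi - P_{E_D}\phi \Vert_{L^2}^2 \le \lambda_{D+1}^{-\beta}\sum_{i > D} |\langle \phi, e_i \rangle_{L^2}|^2 \lambda_i^\beta.
\end{align*}
Since $\phi \in h^\beta(\mathcal{X})$, the full series $\sum_i |\langle \phi, e_i \rangle_{L^2}|^2 \lambda_i^\beta$ converges, so its tail is $o(1)$. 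Combined with Weyl's law $\lambda_{D+1}^{-\beta} \asymp D^{-2\beta/d}$, this yields $\Vert \phi - P_{E_D}\phi \Vert_{L^2}^2 = o(D^{-2\beta/d})$, hence Claim 1.

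For Claim 2, the key is to show $\phi \in h^\beta(\mathcal{X})$ for every $\beta \in \mathbb{N}$. By iterated integration by parts on $\mathcal{X}$, using that $\Delta^n \phi|_{\partial\mathcal{X}} = 0$ for all $n$ and that $e_j|_{\partial\mathcal{X}} = 0$, we have $\lambda_j^n \langle \phi, e_j\rangle_{L^2} = \langle (-\Delta)^n \phi, e_j\rangle_{L^2}$; boundary terms vanish because each step pairs a $\Delta^k\phi$ having vanishing trace with $e_j$, or vice versa. Summing,
\begin{align*}
  \Vert \phi \Vert_{h^{2n}}^2 = \sum_j |\langle \phi, e_j\rangle_{L^2}|^2 \lambda_j^{2n} = \Vert (-\Delta)^n \phi \Vert_{L^2}^2 < \infty,
\end{align*}
since $\phi \in C^\infty(\bar{\mathcal{X}})$. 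Thus $\phi \in h^{2n}$ for every $n$, and Claim 1 yields $\Vert \phi - P_{E_D}\phi \Vert_{L^2} = o(D^{-2n/d})$; given $r > 0$ we pick $n$ with $2n/d > r$ to conclude.

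The only delicate step is the integration-by-parts in Claim 2: one must carefully track that \emph{both} sides of the pairing contribute only vanishing boundary terms across all $n$, which is exactly what the definition of $C^\infty_{0,\Delta}$ ensures; otherwise the proof is a direct spectral estimate. Weyl's law is invoked as a black box from the standard theory of the Dirichlet Laplacian on smooth domains.
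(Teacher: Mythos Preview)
Your proof is correct and follows essentially the same approach as the paper: Weyl's law $\lambda_j \asymp j^{2/d}$ drives Claims~1 and~3 directly, and Claim~2 is reduced to Claim~1 by showing $C_{0,\Delta}^\infty(\mathcal{X}) \subset \bigcap_n h^{2n}(\mathcal{X})$ via the integration-by-parts identity $\lambda_j^n\langle\phi,e_j\rangle_{L^2} = \langle(-\Delta)^n\phi,e_j\rangle_{L^2}$ and Parseval. Your write-up in fact spells out the spectral tail argument (that the remainder $\sum_{i>D}$ is $o(1)$, not merely $O(1)$) more explicitly than the paper, which simply cites the Weyl asymptotics.
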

\begin{proof}
  Claims 1 and 3 follow by the Weyl-asymptotics $\lambda_j \lesssim j^{2/d}$, see \cite[Section~A.3]{Nickl2023}. Claim~1 implies Claim~2, if we can show $C_{0, \Delta}^\infty(\mathcal{X}) \subset \bigcap_{\beta \in \mathbb{N}} h^{2\beta}(\mathcal{X})$. 
  
  Let $\beta \in \mathbb{N}$ and $\varphi \in C_{0, \Delta}^\infty(\mathcal{X})$. We need to show $\varphi \in h^{2\beta}(\mathcal{X})$, which is equivalent to
  \begin{align*}
    \sum_{j = 1}^\infty \lambda_j^{2\beta} \abs{\ip{\varphi}{e_j}{L^2}}^2 < \infty.
  \end{align*}
  Now,
  \begin{align*}
    \lambda_j^{2\beta} \abs{\ip{\varphi}{e_j}{L^2}}^2 = \abs{\ip{\varphi}{ \lambda_j^\beta e_j}{L^2}}^2 = \abs{\ip{\varphi}{ \Delta^\beta e_j }{L^2}}^2 = \abs{\ip{ \Delta^\beta \varphi}{  e_j }{L^2}}^2.
  \end{align*}
  Here, we integrate by parts in the last equality, which is justified by $\varphi \in C_{0, \Delta}^\infty(\mathcal{X})$. Thus, Parseval's identity yields
  \begin{align*}
    \sum_{j = 1}^\infty \lambda_j^{2\beta} \abs{\ip{\varphi}{e_j}{L^2}}^2 = \sum_{j = 1}^\infty \abs{\ip{\Delta^\beta \varphi}{ e_j}{L^2}}^2 = \norm{\Delta^\beta \varphi}{L^2}^2 < \infty.
  \end{align*}
\end{proof} 
\subsubsection{Properties of the Forward Map}\label{sec:analytic}
 A linearization of $\mathcal{G}$ at $\theta \in \Theta$ in the sense of Hypothesis~\ref{cond:2} was derived in \cite[Theorem~3.3.2]{Nickl2023}. It is densely defined by
\begin{align*}
  \mathbb{I}_{\theta}h \coloneqq -\mathcal{L}_{\theta}^{-1}\sq{\nabla \cdot \lr{e^{\theta} h \nabla u_\theta}}, \quad h \in \Theta,
\end{align*}
where $\mathcal{L}_{\theta}^{-1} : L^2(\mathcal{X}) \longrightarrow L^2(\mathcal{X})$ is the solution operator for \eqref{eq:darcypde} with vanishing boundary condition \cite[Section~A.2]{Nickl2023}. We confirm Hypothesis~\ref{cond:inv} by a stability estimate.
\begin{lemma}\label{lem:stab}
  $\eigmin{\fishermatrix} \gtrsim D^{-6/d}.$
\end{lemma}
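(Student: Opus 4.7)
The plan is to reduce the eigenvalue bound to a stability estimate for the linearization $\mathbb{I}_{\theta_0}$ of the forward map in a weak Sobolev norm, and then to cash in the finite-dimensional localization to $E_D$ via the Weyl asymptotic.

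First, I would identify $\lambda_{\min}(\mathcal{I}_D)$ variationally. Since $P_{E_D}$ is self-adjoint and equals the identity on $E_D$, for every $h \in E_D$ we have
\[
\langle \mathcal{I}_D h, h\rangle_{L^2} = \langle P_{E_D}\mathbb{I}_{\theta_0}^*\mathbb{I}_{\theta_0} h, h\rangle_{L^2} = \langle \mathbb{I}_{\theta_0}^*\mathbb{I}_{\theta_0} h, h\rangle_{L^2} = \|\mathbb{I}_{\theta_0} h\|_{L^2}^2.
\]
As $\mathcal{I}_D$ is self-adjoint and positive on the finite-dimensional Hilbert space $(E_D, \langle\cdot,\cdot\rangle_{L^2})$, the Rayleigh quotient gives
\[
\lambda_{\min}(\mathcal{I}_D) = \inf_{h \in E_D\setminus\{0\}} \frac{\|\mathbb{I}_{\theta_0} h\|_{L^2}^2}{\|h\|_{L^2}^2}.
\]

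Second, I would invoke (or establish by citing Section~3.5 / Section~4.2 of Nickl~2023) a quantitative injectivity estimate for $\mathbb{I}_{\theta_0}$ of the form
\[
\|\mathbb{I}_{\theta_0} h\|_{L^2(\mathcal{X})} \gtrsim \|h\|_{h^{-3}(\mathcal{X})}, \qquad h \in C_{0,\Delta}^\infty(\mathcal{X}).
\]
This is the Darcy analogue of an elliptic stability bound: because $\mathbb{I}_{\theta_0} = -\mathcal{L}_{\theta_0}^{-1}[\nabla\cdot(e^{\theta_0}h\,\nabla u_{\theta_0})]$ combines a first-order differential operator in $h$ with the two-derivative gain of $\mathcal{L}_{\theta_0}^{-1}$, one expects a stability estimate in a negative Sobolev norm; the specific exponent $-3$ tracks the smoothing gained from $\mathcal{L}_{\theta_0}^{-1}$, the loss from $\nabla\cdot$, and the boundary/Sobolev embedding losses needed to handle the variable coefficient $e^{\theta_0}\nabla u_{\theta_0}$ in the weak formulation on the boundary class $C_{0,\Delta}^\infty$.

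Third, I would lift this weak-norm bound to $L^2$ using the finite-dimensional structure of $E_D$. By the Weyl asymptotic $\lambda_j \lesssim j^{2/d}$ (invoked already in Lemma~\ref{lem:convproj}), any $h = \sum_{j\le D} c_j e_j \in E_D$ satisfies
\[
\|h\|_{h^{-3}}^2 = \sum_{j\le D}\lambda_j^{-3}c_j^2 \ge \lambda_D^{-3}\sum_{j\le D}c_j^2 \gtrsim D^{-6/d}\|h\|_{L^2}^2.
\]
Combining Steps 1--3 yields $\lambda_{\min}(\mathcal{I}_D)\gtrsim D^{-6/d}$.

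The main obstacle is Step~2: the stability estimate in the correct weak norm. A naive symbol calculation for $\mathbb{I}_{\theta_0}$ predicts an $H^{-1}$ bound, which would only yield $D^{-2/d}$; obtaining the looser $h^{-3}$ estimate appropriate to the basis $\{e_j\}$ of $-\Delta$ with Dirichlet boundary conditions requires a careful argument that (i) uses the positivity of $|\nabla u_{\theta_0}|$ on compact subsets of $\mathcal{X}$ together with a unique-continuation-type argument to ensure injectivity, and (ii) quantifies boundary losses when interpreting the divergence weakly against test functions in $C_{0,\Delta}^\infty$. Once that PDE estimate is in hand, the Weyl-asymptotic conversion in Step~3 is routine.
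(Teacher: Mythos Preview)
Your proposal is correct and follows essentially the same route as the paper: the paper cites the bound $\|\mathbb{I}_{\theta_0} h\|_{L^2}\ge C D^{-3/d}\|h\|_{L^2}$ for $h\in E_D$ directly from \cite[p.~116]{Nickl2023}, then squares it and uses the identity $\langle\mathcal{I}_D h,h\rangle_{L^2}=\|\mathbb{I}_{\theta_0}h\|_{L^2}^2$ together with the variational characterization of $\lambda_{\min}$, exactly as in your Step~1. Your Steps~2--3 merely unpack how that cited inequality is obtained (a weak-norm stability estimate combined with the Weyl conversion on $E_D$), so there is no substantive difference.
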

\begin{proof}
  The last part of \cite[p.~116]{Nickl2023} states
\begin{align}\label{eq:stab}
  \Vert \lin  h\Vert_{L^2}   \ge C D^{-3/d}\Vert h \Vert_{L^2}, 
\end{align}
where $C > 0$ is independent of $D$ and $h \in E_D$. Noting $\mathcal{I} = \lin^* \lin$ and recalling \eqref{eq:pswap}, we have
\begin{align*}
  \Vert \lin  h\Vert_{L^2}^2 =\ip{\mathcal{I}h}{h}{L^2} = \ip{\fishermatrix h}{h}{L^2}, \quad h \in E_D.
\end{align*}
Square \eqref{eq:stab} and plug in the above display:
\begin{align*}
  \ip{\fishermatrix h}{h}{L^2} \ge C^2 D^{-6/d}\Vert h \Vert_{L^2}^2.
\end{align*}
Take now the infimum over $h \in E_D \text{ with } \norm{h}{L^2} = 1$.
\end{proof}
Let $\beta\in \mathbb{N}$ and $m \in \mathbb{N}_0$. We shall use the embeddings
\begin{align*}
  h^\beta(\mathcal{X}) &\hookrightarrow H^\beta_0(\mathcal{X}),\\
  H^\beta_0(\mathcal{X}) &\hookrightarrow C^m(\bar{\mathcal{X}}), \quad \beta > m + d/2,
\end{align*}
see \cite[Section~A.3.3]{Nickl2023}. 
\begin{lemma}\label{lem:lem41}
  Let $\beta \in \mathbb{N}$ satisfy $\beta > 1 + d/2$. Let $h, \theta, \theta \in \Theta$. Then,
  \begin{align}
    \label{eq:testi} \Vert \mathbb{I}_\theta h \Vert_{L^\infty} &\lesssim \Vert h \Vert_{h^1},
  \end{align}
  with constant only depending on $A \ge \max\curly{\Vert \theta \Vert_{h^\beta}}$. Furthermore,
  \begin{align}
    \label{eq:sesti}
    \Vert \forward_\theta - \forward_{\theta'} \Vert_{L^\infty} &\lesssim \Vert \theta - \theta' \Vert_{h^1}, \\
    \label{eq:festi}
    \Vert \forward_\theta - \forward_{\theta'} \Vert_{L^2} &\lesssim \Vert \theta - \theta' \Vert_{L^2},
  \end{align} 
   with constants only depending on $ B \ge \max\curly{\Vert \theta \Vert_{h^\beta}, \Vert \theta' \Vert_{h^\beta}}$.
\end{lemma}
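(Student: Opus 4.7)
The plan is to extract all three estimates from standard elliptic theory applied to the Darcy PDE and its linearization. Write $u\coloneqq\forward_\theta$, $u'\coloneqq\forward_{\theta'}$, $w\coloneqq u-u'$ and $v\coloneqq\mathbb{I}_{\theta}h$. The condition $\beta>1+d/2$ together with the embeddings $h^\beta(\mathcal{X})\hookrightarrow H^\beta_0(\mathcal{X})\hookrightarrow C^1(\bar{\mathcal{X}})$ implies that $e^\theta,e^{\theta'}$ are bounded above and away from zero with $C^1$-norms controlled by $A$ or $B$, so that elliptic regularity for \eqref{eq:darcypde} yields $u,u'\in C^2(\bar{\mathcal{X}})$ with $C^2$-bounds depending only on $\Vert\theta\Vert_{h^\beta}$ (and on the fixed $f,g$), and the coercivity estimate $\Vert\mathcal{L}_\theta^{-1}F\Vert_{H^1_0}\lesssim\Vert F\Vert_{H^{-1}}$ holds with constant controlled by $B$.

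I would first deduce \eqref{eq:festi} directly. Subtracting the two PDEs gives $\nabla\cdot(e^\theta\nabla w)=-\nabla\cdot((e^\theta-e^{\theta'})\nabla u')$ with $w|_{\partial\mathcal{X}}=0$, so the basic energy inequality yields $\Vert w\Vert_{L^2}\le\Vert w\Vert_{H^1_0}\lesssim\Vert(e^\theta-e^{\theta'})\nabla u'\Vert_{L^2}$. A pointwise mean value bound $|e^\theta-e^{\theta'}|\le e^{\max(\Vert\theta\Vert_\infty,\Vert\theta'\Vert_\infty)}|\theta-\theta'|$ combined with $\Vert\nabla u'\Vert_{L^\infty}$ bounded in terms of $B$ then gives $\Vert w\Vert_{L^2}\lesssim\Vert\theta-\theta'\Vert_{L^2}$.

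For \eqref{eq:sesti} and \eqref{eq:testi} I would bootstrap from $H^1$ to $H^2$ and invoke the Sobolev embedding $H^2(\mathcal{X})\hookrightarrow L^\infty(\mathcal{X})$, which holds because $d\le 3$. Dividing the PDE for $w$ by $e^\theta$ and expanding the divergence by Leibniz gives $\Delta w=-\nabla\theta\cdot\nabla w-e^{-\theta}\nabla\cdot((e^\theta-e^{\theta'})\nabla u')$, and the $C^1$-bounds on $\theta,\theta'$ together with the $C^2$-bound on $u'$ control the right-hand side in $L^2$ by $\Vert w\Vert_{H^1}+\Vert\theta-\theta'\Vert_{H^1}$. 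Using \eqref{eq:festi} to absorb the first term and applying $H^2_0$-regularity for the Dirichlet Laplacian yields $\Vert w\Vert_{H^2_0}\lesssim\Vert\theta-\theta'\Vert_{H^1}$, and the Sobolev embedding together with the norm equivalence $h^1(\mathcal{X})\cong H^1_0(\mathcal{X})$ closes \eqref{eq:sesti}. The argument for \eqref{eq:testi} is entirely analogous: $v=\mathbb{I}_\theta h$ satisfies $\Delta v=-\nabla\theta\cdot\nabla v-(h\nabla\theta+\nabla h)\cdot\nabla u_\theta-h\Delta u_\theta$, whose right-hand side is bounded in $L^2$ by $\Vert v\Vert_{H^1}+\Vert h\Vert_{H^1}$, while the basic energy estimate supplies $\Vert v\Vert_{H^1}\lesssim\Vert h\Vert_{L^2}$.

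The main obstacle is bookkeeping rather than any new idea: every constant (elliptic coercivity, Sobolev embedding, $C^2$-bound on $u_\theta$, exponential factor from $e^\theta$) must be traced back to $\Vert\theta\Vert_{h^\beta}\le A$ or $\max(\Vert\theta\Vert_{h^\beta},\Vert\theta'\Vert_{h^\beta})\le B$. This is guaranteed by $\beta>1+d/2$ and $d\le 3$, which together ensure a uniform $C^1$-modulus for the conductivity and a universal $H^2\hookrightarrow L^\infty$ embedding constant.
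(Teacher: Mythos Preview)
Your argument is correct and rests on the same two ingredients as the paper's proof: the $H^2$ elliptic estimate for $\mathcal{L}_\theta$ with constants controlled by $\Vert\theta\Vert_{h^\beta}$, and the Sobolev embedding $H^2(\mathcal{X})\hookrightarrow L^\infty(\mathcal{X})$ valid for $d\le 3$. The organization differs slightly. The paper invokes the ready-made bound $\Vert\mathcal{L}_\theta^{-1}g\Vert_{H^2}\lesssim\Vert g\Vert_{L^2}$ and a multiplicative Sobolev inequality to obtain \eqref{eq:testi}, then deduces \eqref{eq:sesti} from \eqref{eq:testi} in one line via the mean value theorem (since $\mathbb{I}_\theta$ is the Fr\'echet derivative of $\mathcal{G}$ and its $h^1\to L^\infty$ norm is bounded uniformly along the segment $\theta'+t(\theta-\theta')$); \eqref{eq:festi} is quoted from \cite[Proposition~2.1.3]{Nickl2023}. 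Your route is more self-contained: you re-derive the $H^2$ bound by passing to $\Delta w$ and treat \eqref{eq:sesti} by a direct PDE computation rather than by differentiating $\mathcal{G}$. Both are equally valid; the paper's version is shorter because it leans on external references, while yours makes the constant-tracking explicit. One small slip: write $H^2\cap H^1_0$ rather than $H^2_0$ for the regularity space of $w$ and $v$, since only the trace and not the normal derivative vanishes.
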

\begin{proof}
  \hspace*{0.5cm}
  \\\\
  \eqref{eq:testi}: Let $\lesssim$ denote inequality up to a constant that only depends on $B$. Since $d \le 3$, we have $H^{2}(\mathcal{X}) \hookrightarrow C^0(\bar{\mathcal{X}})  $. Thus,
  \begin{align}\label{eq:lfinity}
    \Vert \mathbb{I}_{\theta} h \Vert_{L^\infty} \lesssim  \Vert \mathbb{I}_{\theta} h \Vert_{H^2} = \norm{\mathcal{L}_{\theta}^{-1}\sq{\nabla \cdot \lr{e^{\theta} h \nabla u_\theta}}}{H^2}, \quad h \in \Theta.
  \end{align}
  By \cite[Proposition~A.5.3]{Nickl2023}, we have
  \begin{align*}
   \norm{\mathcal{L}_\theta^{-1} g}{H^2}  \lesssim \norm{g}{L^2}, \quad g \in L^2(\mathcal{X}),
  \end{align*}
  where the constant only depends on $A$, since $h^\beta(\mathcal{X}) \hookrightarrow C^1(\bar{\mathcal{X}})$. Recalling \eqref{eq:lfinity}, we thus have
  \begin{align}\label{eq:recnow}
    \Vert \mathbb{I}_{\theta} h \Vert_{L^\infty} \lesssim \norm{\nabla \cdot \lr{e^{\theta} h \nabla u_\theta}}{L^2} \lesssim \norm{e^{\theta}h \nabla u_{\theta}}{H^1}.
  \end{align}
  Using the multiplicative Sobolev inequality from \cite[Section~A.1]{Nickl2023}, we have
  \begin{align*}
      \norm{e^{\theta}h \nabla u_{\theta}}{H^1} \lesssim \norm{e^{\theta}}{C^1} \norm{\nabla u_{\theta}}{H^1}\Vert h \Vert_{H^1}.
  \end{align*}
  Now, $\norm{e^{\theta}}{C^1} \lesssim 1$ follows by $h^\beta(\mathcal{X}) \hookrightarrow C^1(\bar{\mathcal{X}})$. By \cite[Proposition~A.5.3]{Nickl2023}, we also have $\Vert u_{\theta} \Vert_{H^2} \lesssim 1$. Thus, the above display yields
  \begin{align*}
    \norm{e^{\theta}h \nabla u_{\theta}}{H^1} \lesssim \Vert h \Vert_{H^1} \lesssim \Vert h \Vert_{h^1}.
  \end{align*}
  Recalling \eqref{eq:recnow}, we are done.
  \\\\
  \eqref{eq:sesti}: This follows by coupling \eqref{eq:testi} with the mean value theorem \cite[Theorem~3.2.7]{Drbek2013}.
  \\\\
  \eqref{eq:festi}: This follows by \cite[Proposition~2.1.3]{Nickl2023} and $h^\beta(\mathcal{X}) \hookrightarrow H^\beta_0(\mathcal{X})$.
\end{proof}
\begin{lemma}\label{lem:stabi}
  Let $\beta \in \mathbb{N}$ satisfy $\beta > 1 + d/2$. Then, 
  \begin{align*}
    \Vert \theta - \theta' \Vert_{L^2}  \lesssim \Vert \forward_\theta - \forward_{\theta'} \Vert_{L^2}^{\frac{\beta - 1}{\beta + 1}}, \quad \theta, \theta' \in \Theta,
  \end{align*}
  where the constant only depends on $B \ge \max\curly{\Vert \theta \Vert_{h^\beta}, \Vert \theta' \Vert_{h^\beta}} $.
\end{lemma}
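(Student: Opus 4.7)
The plan is to combine a direct stability estimate for Darcy's equation with Sobolev interpolation and a priori elliptic regularity. I would first bound $\Vert \theta - \theta' \Vert_{L^2}$ by a higher-regularity norm of $\forward_\theta - \forward_{\theta'}$, namely $\Vert \forward_\theta - \forward_{\theta'} \Vert_{H^2}$, and then trade that $H^2$ norm against $\Vert \forward_\theta - \forward_{\theta'} \Vert_{L^2}$ by interpolating against the $H^{\beta+1}$ norm, which will be controlled by a constant depending only on $B$.

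Concretely, Step~1 would invoke the standard stability result for Darcy's problem from \cite{Nickl2023}, giving
\[
    \Vert \theta - \theta' \Vert_{L^2} \lesssim \Vert \forward_\theta - \forward_{\theta'} \Vert_{H^2},
\]
with a constant only depending on $C^1$-bounds of $\theta,\theta'$. Since $\beta > 1 + d/2$, these $C^1$ bounds follow from $\Vert \theta \Vert_{h^\beta}, \Vert \theta' \Vert_{h^\beta} \le B$ via the embeddings $h^\beta(\mathcal{X}) \hookrightarrow H^\beta_0(\mathcal{X}) \hookrightarrow C^1(\bar{\mathcal{X}})$ already exploited in Section~\ref{sec:analytic}. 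Step~2 would apply elliptic regularity in the spirit of \cite[Proposition~A.5.3]{Nickl2023}, extended to order $\beta+1$, yielding $\Vert \forward_\theta \Vert_{H^{\beta+1}} + \Vert \forward_{\theta'} \Vert_{H^{\beta+1}} \lesssim 1$, and hence $\Vert \forward_\theta - \forward_{\theta'} \Vert_{H^{\beta+1}} \lesssim 1$, with constants depending only on $B$. Step~3 is the classical Sobolev interpolation
\[
    \Vert f \Vert_{H^2} \lesssim \Vert f \Vert_{L^2}^{(\beta-1)/(\beta+1)} \Vert f \Vert_{H^{\beta+1}}^{2/(\beta+1)},
\]
whose exponents come from the relation $2 = 0\cdot\lambda + (\beta+1)(1-\lambda)$ with $\lambda = (\beta-1)/(\beta+1)$. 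Chaining these three steps and absorbing the bounded $H^{\beta+1}$ factor yields the claim.

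The main obstacle is Step~1: securing the $H^2$-to-$L^2$ stability estimate with a constant depending only on $B$. The standard approach subtracts the two PDEs to obtain
\[
    \nabla\cdot\bigl(e^{\theta'}\nabla(u_\theta - u_{\theta'})\bigr) = -\nabla\cdot\bigl((e^\theta - e^{\theta'})\nabla u_\theta\bigr),
\]
multiplies by a suitable test function and integrates by parts, recovering $\theta - \theta'$ via a pointwise lower bound on $|\nabla u_\theta|$ arising from the boundary data $g$. Tracking constants through this argument so that they depend only on $B$ via $h^\beta(\mathcal{X}) \hookrightarrow C^1(\bar{\mathcal{X}})$ is the technical heart of the proof; Steps~2 and 3 are routine.
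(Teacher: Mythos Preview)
Your three-step plan---basic stability $\|\theta-\theta'\|_{L^2}\lesssim\|u_\theta-u_{\theta'}\|_{H^2}$, uniform $H^{\beta+1}$ elliptic regularity, and Sobolev interpolation between $L^2$ and $H^{\beta+1}$---is precisely the structure the paper invokes by citing \cite[Propositions~2.1.5 and 2.1.7]{Nickl2023}, so the overall route matches.

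The one place your sketch diverges is the mechanism for Step~1. You attribute the needed coercivity to a pointwise lower bound on $|\nabla u_\theta|$ coming from the boundary data $g$, whereas the paper (and the cited reference) rely on the assumption $f>0$; this is the content of the phrase ``Since $f>0$, we can check \cite[Proposition~2.1.5]{Nickl2023}'' together with the remark preceding \cite[Proposition~2.1.7]{Nickl2023}. There is no general reason for $u_\theta$ to be free of interior critical points based on $g$ alone in dimension $d\le 3$, so your proposed mechanism would not go through as stated. In the energy argument for the subtracted equation, after multiplying by $e^\theta-e^{\theta'}$ and integrating by parts, the coercive term is governed by the divergence of the transport field $\nabla u_\theta$, and it is the source positivity $f>0$ that secures the required sign condition, not the Dirichlet data. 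With that correction to Step~1, your Steps~2 and~3 are routine, as you say.
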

\begin{proof}
  Since $f > 0$, we can check \cite[Proposition~2.1.5]{Nickl2023} using the remark that precedes \cite[Proposition~2.1.7]{Nickl2023}. The result then follows by arguing as in \cite[Proposition~2.1.7]{Nickl2023}.

  We note that this reference replaces $e^\theta$ in \eqref{eq:darcypde} by $e^\theta + f_{\text{min}}$, where $f_{\text{min}} > 0$. The arguments still work in our case. Indeed, since $h^\beta(\mathcal{X}) \hookrightarrow C^1(\bar{\mathcal{X}})$, we can use the bounds
  \begin{align*}
    \norm{e^\theta}{C^1} \le C_1 \text{ and } e^\theta \ge e^{-C_2}, 
  \end{align*}
  where $C_1 = C_1(B)$ and $C_2 = C_2(B)$ are positive constants.
\end{proof}
\begin{lemma}\label{lem:linerror}
  Let $\beta \in \mathbb{N}$ satisfy $\beta \ge 2$. Then,
  \begin{align*}
    \norm{R_\theta}{L^2} \lesssim \norm{\theta - \truth}{h^\beta}^{\frac{4}{\beta}} \norm{\theta - \truth}{L^2}^{\frac{2(\beta - 2)}{\beta}},
  \end{align*}
  with constant independent of $\theta \in \Theta$.
\end{lemma}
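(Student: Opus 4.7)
The plan is to split the bound into a quadratic $L^{2}$-estimate for $R_\theta$ in the $h^{2}$-norm, followed by a spectral interpolation inequality on the scale $h^{s}$.

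\textbf{Step 1: Quadratic PDE estimate.} I would first establish the intermediate bound
\[
\norm{R_\theta}{L^2} \lesssim \norm{\theta - \truth}{h^2}^{2}.
\]
Set $h \coloneqq \theta - \truth$, $v \coloneqq u_\theta - u_\truth$, $w \coloneqq \lin h$, so that $R_\theta = v - w$. Subtracting the Darcy PDEs for $u_\theta$ and $u_\truth$ and substituting the expansion $e^{\truth + h} = e^{\truth}(1 + h + q(h))$ with $q(h) \coloneqq e^{h} - 1 - h$, a short computation shows that $R_\theta$ solves
\[
\nabla \cdot \lr{e^{\truth} \nabla R_\theta} \;=\; -\nabla \cdot \lr{e^{\truth}\, q(h)\, \nabla u_{\truth} + e^{\truth}\bigl(h + q(h)\bigr)\nabla v}
\]
with vanishing boundary values. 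The solution operator $\mathcal{L}_{\truth}^{-1}$ controls divergence-form forcing in $L^{2}$ by the $L^{2}$-norm of the flux (as in \cite[Proposition~A.5.3]{Nickl2023}). For the first summand, I invoke $h^{2}(\mathcal{X}) \hookrightarrow C^{0}(\bar{\mathcal{X}})$ (valid for $d \le 3$) to get $\norm{q(h)}{L^\infty} \lesssim \norm{h}{L^\infty}^{2} \lesssim \norm{h}{h^{2}}^{2}$, while $\norm{\nabla u_\truth}{L^{\infty}}$ is a constant depending only on $\truth$. For the second, I combine the first-order well-posedness $\norm{v}{H^{1}} \lesssim \norm{h}{L^{2}} \le \norm{h}{h^{2}}$ (which follows by applying $\mathcal{L}_{\truth}^{-1}$ to the divergence form $\nabla \cdot[(e^\theta - e^\truth)\nabla u_\theta]$) with the multiplier bound $\norm{h \nabla v}{L^{2}} \lesssim \norm{h}{L^\infty}\norm{\nabla v}{L^{2}}$, yielding $\norm{h \nabla v}{L^{2}} \lesssim \norm{h}{h^{2}}^{2}$; the $q(h)\nabla v$ contribution is bounded the same way and is in fact of higher order.

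\textbf{Step 2: Spectral interpolation.} I then interpolate between $h^{\beta}$ and $L^{2}$ using
\[
\norm{h}{h^{2}} \le \norm{h}{h^{\beta}}^{2/\beta}\, \norm{h}{L^{2}}^{(\beta - 2)/\beta}, \qquad h \in h^{\beta}(\mathcal{X}),
\]
which follows from Hölder's inequality with conjugate exponents $\beta/2$ and $\beta/(\beta - 2)$ (requiring $\beta \ge 2$) applied to the spectral decomposition
\[
\sum_{j} \lambda_j^{2} \abs{\langle h, e_j \rangle}^{2} = \sum_{j} \lr{\lambda_j^{\beta} \abs{\langle h, e_j \rangle}^{2}}^{2/\beta} \lr{\abs{\langle h, e_j \rangle}^{2}}^{(\beta-2)/\beta}.
\]
Squaring this inequality and inserting into Step 1 gives
\[
\norm{R_\theta}{L^{2}} \lesssim \norm{h}{h^{2}}^{2} \lesssim \norm{h}{h^{\beta}}^{4/\beta}\, \norm{h}{L^{2}}^{2(\beta - 2)/\beta},
\]
which is the desired bound.

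\textbf{Main obstacle.} The nontrivial work is Step 1. Identifying the equation satisfied by $R_\theta$ and isolating the quadratic source $F$ are bookkeeping, but the estimates on $F$ require carefully mixing the first-order $H^{1}$-bound on $v$ with the pointwise embedding $h^{2} \hookrightarrow C^{0}$: the cross term $e^{\truth} h \nabla v$ is only quadratic in $h$ because of the linear-in-$h$ estimate $\norm{v}{H^{1}} \lesssim \norm{h}{L^{2}}$, and the genuinely nonlinear term $q(h)\nabla u_\truth$ is only quadratic in $h$ because the Sobolev embedding turns the pointwise inequality $\abs{q(h)} \lesssim \abs{h}^{2}$ into an $L^{\infty}$-estimate controlled by $\norm{h}{h^{2}}^{2}$. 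Step 2 is a short Hölder argument once Step 1 is in hand.
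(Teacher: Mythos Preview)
Your proof is correct and follows essentially the same route as the paper. The paper obtains the quadratic bound $\norm{R_\theta}{L^2} \lesssim \norm{\theta - \truth}{L^\infty}^2$ by citing \cite[Theorem~3.3.2]{Nickl2023}, then uses $h^2(\mathcal{X}) \hookrightarrow C^0(\bar{\mathcal{X}})$ and the interpolation inequality \cite[Proposition~8.19]{Engl1996-qv}; your Step~1 simply unpacks that cited theorem via the explicit equation for $R_\theta$, and your Step~2 is the same interpolation written out via H\"older on the spectral coefficients.
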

\begin{proof}
  Let $\lesssim$ denote inequality up to constant independent of $\theta \in \Theta$. 
  
  By \cite[Theorem~3.3.2]{Nickl2023}, we have
  \begin{align*}
  \norm{R_\theta}{L^2} \lesssim \norm{\theta - \truth}{L^\infty}^2.
  \end{align*}
  Using $ h^2(\mathcal{X})  \hookrightarrow C^0(\bar{\mathcal{X}})$, we obtain $\norm{\theta - \truth}{L^\infty}^2 \lesssim \norm{\theta - \truth}{h^2}^2$. The interpolation inequality \cite[Proposition~8.19]{Engl1996-qv} yields
  \begin{align*}
    \norm{\theta - \truth}{h^2} \lesssim \Vert \theta - \truth \Vert_{h^\beta}^{\frac{2}{\beta}} \Vert \theta - \truth \Vert_{L^2}^{\frac{\beta - 2}{\beta}}. 
  \end{align*}
  Tracking back the inequalities, we are done.
\end{proof}
\subsubsection{Setting up the remaining Hypotheses}\label{sec:setup}
Assume $N^{l} \lesssim D \lesssim N^{\frac{d}{2\alpha + d}}$ for some $l > 0$. We claim that Hypothesis~\ref{cond:posteriorcontraction} holds with 
\begin{enumerate}
  \item $\delta_N^{\forward} \coloneqq N^{-\frac{\alpha}{2\alpha + d}}$,
  \item $\mathcal{R} \coloneqq h^\alpha(\mathcal{X})$,
  \item $\delta^\h_N \coloneqq \left(\delta_N^\forward\right)^{\frac{\alpha - 1}{\alpha + 1}}$.
\end{enumerate}
This will follow from Lemma~\ref{lem:stabi}, and \cite[Exercise~2.1.1]{Nickl2023} (with spectral Sobolev spaces), if we can show 
\begin{align*}
  \norm{\forward(\truth) - \forward(P_{E_D}\truth)}{L^2} \lesssim \delta_N^\forward.
\end{align*}
However, \eqref{eq:festi} and Lemma~\ref{lem:convproj}, Claim 2 yield
\begin{align*}
  \norm{\forward(\truth) - \forward(P_{E_D}\truth)}{L^2} \lesssim \norm{\truth - P_{E_D}\truth}{L^2} \lesssim D^{-\frac{1}{l}\cdot\frac{\alpha}{2\alpha + d}} \lesssim N^{-\frac{\alpha}{2\alpha + d}}.
\end{align*}

For Hypothesis~\ref{cond:4}, we let
\begin{align*}
  d_{\Theta}(\theta, \theta') \coloneqq \norm{\theta - \theta'}{h^1}, \quad \theta, \theta' \in \Theta.
\end{align*}
Since $\Theta_{N, M} \subset B_{h^\alpha}(M)$, we can set $r_N \coloneqq 1$ by \eqref{eq:sesti}. By \eqref{eq:testi} and \eqref{eq:sesti}, we have
\begin{align*}
  \frac{\norm{R_\theta - R_{\theta'}}{L^\infty}}{\norm{\theta - \theta'}{h^1}}  \le \frac{\norm{\forward_\theta - \forward_{\theta'}}{L^\infty}}{\norm{\theta - \theta'}{h^1}} + \frac{\norm{\lin [\theta - \theta']}{L^\infty}}{\norm{\theta - \theta'}{h^1}} \lesssim 1 , \quad \theta, \theta' \in \Theta_{N, M},
\end{align*}
where the constant only depends on $M$. Thus, we can set $g_N \coloneqq 1$ 

By Lemma~\ref{lem:linerror} and $\Theta_{N, M} \subset B_{h^\alpha}(M)$, we can set $\sigma_N \coloneqq \left(\delta_N^\h \right)^{\frac{2(\alpha - 2)}{\alpha}}$. Similarly, we can set $\delta_N^\Theta \coloneqq \left(\delta_N^\h \right)^{\frac{\alpha - 1}{\alpha + 1}}$. Indeed, we can interpolate $h^1(\mathcal{X})$ by $h^\alpha(\mathcal{X})$ and $L^2(\mathcal{X})$ using \cite[Proposition~8.19]{Engl1996-qv}.
 
We proceed to define $J_N$. Using $\regset \subset B_{h^\alpha}(M)$, and subsequently \cite[Proposition~A.3.1]{Nickl2023}, we have
 \begin{align*}
  \int_0^t \sqrt{\log 2 N(\regset, d_\Theta; \varepsilon)} \, d\varepsilon &\lesssim \int_0^t \sqrt{\log 2 N\lr{B_{h^\alpha}(M), \norm{\cdot}{h^1}; \varepsilon}} \, d\varepsilon \\ &\lesssim \int_0^t \varepsilon^{-\frac{d}{2(\alpha - 1)}} \, d\varepsilon,
 \end{align*}
 with constants independent of $N$ and $t > 0$. We can thus set $J_N(t) \coloneqq t^{1 - \frac{d}{2\alpha - 2}}$.
\begin{proof}[Proof of Theorem~\ref{thm:drc}]
  The diameter bound is clear from Lemma~\ref{lem:stab} and the last bound provided in Theorem~\ref{thm:crd}.
  
  We first show
  \begin{align}
    D &\lesssim N^{\frac{d}{2\alpha + d}}, \label{eq:bbn} \\
    N \sigma_N^{3/2} &= o(1), \label{eq:flim1} \\
     \delta_N^\forward \sqrt{s_N} &\le \frac{1}{\sqrt{s_N}}, \label{eq:boundofsn} \\
     \frac{ D^{\frac{\alpha}{d}}}{\sqrt{s_N}\eigmin{\fishermatrix}}  &= o(1). \label{eq:slim1}
  \end{align}
  Recall that we required \eqref{eq:bbn} in Section~\ref{sec:setup}. We use \eqref{eq:flim1}, \eqref{eq:slim1} and \eqref{eq:boundofsn} to check assumptions.
  \\\\
  \eqref{eq:bbn}: We factorize \eqref{eq:boundonu}, and use $d \le \alpha$:
  \begin{align*}
   u <  \frac{1}{2}\cdot \frac{d}{2\alpha + d} \cdot \frac{d}{\alpha + 6} < \frac{d}{2\alpha + d}.
  \end{align*}
  \eqref{eq:flim1}: By definition, $N \sigma_N^{3/2} = N^{t_1}$, where 
  \begin{align*}
    t_1 \coloneqq 1 - \frac{3}{2} \cdot \frac{2\lr{\alpha - 2}}{\alpha} \cdot \frac{\alpha - 1}{\alpha + 1} \cdot \frac{\alpha}{2\alpha + d}.
  \end{align*}
  Noting that $\alpha \ge 14$ and $d \le 3$, we obtain $t_1 < 0$.
  \\\\
  \eqref{eq:boundofsn}: We have
  \begin{align}\label{eq:sncalc}
    s_N = N \lr{\delta_N^\forward}^2 = N^{1 - \frac{2\alpha}{2\alpha + d}} = N^{ \frac{d}{2\alpha + d}}.
  \end{align}
  Now,
  \begin{align*}
    \delta_N^\forward \sqrt{s_N} \le \frac{1}{\sqrt{s_N}} \Longleftrightarrow s_N \delta_N^\forward \le 1 \Longleftrightarrow \frac{d - \alpha}{2\alpha + d } \le 0,
  \end{align*}
  and the last statement is true, since $d \le \alpha$.
  \\\\
  \eqref{eq:slim1}: By Lemma~\ref{lem:stabi}, \eqref{eq:sncalc} and $D \lesssim N^{u}$, we have
  \begin{align*}
    \frac{ D^{\frac{\alpha}{d}}}{\sqrt{s_N} \eigmin{\fishermatrix}} \lesssim N^{- \frac{d}{4\alpha + 2d}} D^{\frac{\alpha}{d} + \frac{6}{d}} \lesssim N^{t_2},
  \end{align*}
  where
  \begin{align*}
    t_2 \coloneqq u \lr{\frac{\alpha}{d} + \frac{6}{d}} - \frac{d}{4\alpha + 2d}.
  \end{align*}
  The condition on $u$ from \eqref{eq:boundonu} is equivalent to $t_2 < 0$. Thus, $N^{t_2} \longrightarrow 0$.
  \\\\
  We now check the assumptions of Section~\ref{sec:invprob}. 
  \\\\
  \textit{Lemma~\ref{lem:asymptnorm}}: If we can show 
  \begin{align*}
    \delta_N^\forward \norm{\perturb}{L^\infty} \longrightarrow 0,
  \end{align*}
  then the assumption holds with $r \coloneqq \frac{\alpha}{2\alpha + d}$. 
  
  Recalling $h^\alpha(\mathcal{X}) \hookrightarrow C^0(\bar{\mathcal{X}})$, Lemma~\ref{lem:convproj}, Claim 2, and subsequently \eqref{eq:bspec}, we have
  \begin{align}\label{eq:ussing}
    \norm{\perturb}{L^\infty} \lesssim \norm{\perturb}{h^\alpha} \lesssim D^{\frac{\alpha}{d}}\norm{\perturb}{L^2} \lesssim \frac{D^{\frac{\alpha}{d}}}{\eigmin{\fishermatrix}}.
  \end{align}
  Using $s_N \ge 1$ and \eqref{eq:boundofsn}, we have 
  \begin{align}\label{eq:delnineq}
    \delta_N^\forward \le \delta_N^\forward \sqrt{s_N} \le \frac{1}{\sqrt{s_N}}.
  \end{align}
  Coupling this with \eqref{eq:ussing}, we obtain
  \begin{align}\label{eq:fby}
    \delta_N^\forward \norm{\perturb}{L^\infty} \le \frac{1}{\sqrt{s_N}} \norm{\perturb}{L^\infty} \lesssim \frac{D^{\frac{\alpha}{d}}}{\sqrt{s_N }\eigmin{\fishermatrix}} \longrightarrow 0.
  \end{align}
  Here, we pass the limit by \eqref{eq:slim1}.
  \\\\
  \textit{Lemma~\ref{lem:empiricalprocess}}: Consider Assumption 1. We justify Remark~\ref{rem:lip} by \eqref{eq:festi}. It now suffices to check $\sqrt{s_N} \eigmin{\fishermatrix} \longrightarrow \infty$. Passing the limit by \eqref{eq:slim1}, we have
  \begin{align*}
    \sqrt{s_N} \eigmin{\fishermatrix} \ge \frac{\sqrt{s_N} \eigmin{\fishermatrix}}{D^{\frac{\alpha}{d}}} = \lr{\frac{D^{\frac{\alpha}{d}}}{\sqrt{s_N }\eigmin{\fishermatrix}}}^{-1} \longrightarrow \infty.
  \end{align*}
  For Assumption 2, note $s_N \le N$ by \eqref{eq:sncalc}, so $1/ \sqrt{N} \le 1 / \sqrt{s_N}$. Using \eqref{eq:ussing}, we thus obtain
  \begin{align*}
    \frac{1}{\sqrt{N}}\norm{\perturb}{h^\alpha} \lesssim \frac{D^{\frac{\alpha}{d}}}{\sqrt{s_N}\eigmin{\fishermatrix}} \longrightarrow 0.
  \end{align*}
  Here, we use \eqref{eq:slim1} to pass the limit.
  
  We now check Assumptions 3-6. We shall use the following facts:
  \begin{enumerate}
    \item $\delta_N^\Theta < \sqrt{\sigma}_N $, since $(\alpha - 1)/(\alpha + 1) > (\alpha - 2)/\alpha$,
    \item $\alpha \ge 14$ and $d \le 3$,
    \item $J_N(t) = t^{1 - \frac{d}{2\alpha - 2}}$ for $t > 0$.
  \end{enumerate}
  For Assumption 3, we pass the limit by \eqref{eq:flim1}:
  \begin{align*}
    \sqrt{N}r_N  J_N\lr{\sigma_N /r_N} = \sqrt{N}\sigma^{1 - \frac{d}{2\alpha - 2}}_N \le \sqrt{N}\sigma_N^{ \frac{23}{26}} \le \lr{N \sigma_N^{\frac{3}{2}}}^{\frac{1}{2}} \longrightarrow 0.
  \end{align*}
  For Assumption 4, we pass the limit by logarithmic to polynomial comparison:
  \begin{align*}
    \sqrt{\log N} r_N^3 \delta_N^\Theta J_{N}(\sigma_N /r_N)^2 /\sigma_N^2 &= \sqrt{\log N}\delta_N^\Theta \lr{\sigma_N}^{- \frac{d}{\alpha - 1}}, \\ 
    &\le \sqrt{\log N} \sigma_N^{\frac{1}{2} - \frac{d}{\alpha - 1}},  \\ 
    &\le \sqrt{\log N} \sigma_N^{\frac{7}{26}} \longrightarrow 0.
  \end{align*}
  For Assumption 5, we pass the limit by \eqref{eq:flim1}:
  \begin{align*}
    \sqrt{N} g_N^2 \delta_N^\Theta J_{N}\lr{\delta_N^\Theta} = \sqrt{N}\lr{\delta_N^\Theta}^{2 - \frac{d}{2\alpha - 2}}\le \sqrt{N}\sigma_N^{1 - \frac{d}{4\alpha - 4}} \le \sqrt{N}\sigma_N^{\frac{49}{52}} \le \lr{N \sigma_N^{\frac{3}{2}}}^{\frac{1}{2}} \longrightarrow 0.
  \end{align*}
  For Assumption 6:
  \begin{align*}
    g_N J_{N}\lr{\delta_N^\Theta} = \lr{\delta_N^\Theta}^{1 - \frac{d}{2\alpha - 2}} \le   \lr{\delta_N^\Theta}^{\frac{25}{56}} \longrightarrow 0.
  \end{align*}
  \\
  \textit{Lemma~\ref{lem:lla}}: For Assumption~1, note $\delta_N^\h < \sqrt{\sigma_N}$, since $(\alpha - 2)/\alpha < 1$. Thus,
  \begin{align*}
    N\lr{\sigma_N^2 + \sigma_N \delta^\h_N } \le N \lr{\sigma_N^2 + \sigma_N^{\frac{3}{2}}} \le 2 N \sigma_N^{\frac{3}{2}} \longrightarrow 0,
  \end{align*}
  where we pass the limit by \eqref{eq:flim1}. Assumptions~2-3 are satisfied by Lemma~\ref{lem:convproj}, Claim 2.
  \\\\
  \textit{Lemma~\ref{lem:prjbvm}}: We first show that the covariance operator $\mathcal{C}: E_D \longrightarrow E_D$ of $\Pi_N$ with respect to 
  \begin{align}\label{eq:psp}
    \ip{x}{y}{\pspace} \coloneqq s_N \ip{x}{y}{h^\alpha} = s_N\sum_{i = 1}^D  \lambda_i^{\alpha}\ip{x}{e_i}{L^2} \ip{y}{e_i}{L^2}, \quad x, y \in E_D,
  \end{align}
  is the identity. Recalling \eqref{eq:sncalc}, we see that
  \begin{align*}
    \theta_N \coloneqq \frac{1}{\sqrt{s_N}}\sum_{i = 1}^D \lambda_i^{-\frac{\alpha}{2}} W_i e_i, \quad \text{where }W_i \stackrel{\text{iid}}{\sim} \mathcal{N}(0, 1),
  \end{align*}
  satisfies $\theta_N \sim \Pi_N$. Let $x \in E_D$. Then, 
  \begin{align*}
    \mathcal{C}x &= s_N  \mathbb{E}^\Pi \sq{\ip{\theta_N}{x}{h^\alpha} \theta_N}, \\ &= \sum_{i = 1}^D  \sum_{j = 1}^D  \lambda_i^{\alpha} \lambda_i^{-\frac{\alpha}{2}}  \ip{x}{e_i}{L^2}  \mathbb{E} \sq{W_i W_j}\lambda_j^{-\frac{\alpha}{2}} e_j, \\
    &= \sum_{i = 1}^D \ip{x}{e_i}{L^2} e_i, \\
    &= x.
  \end{align*}
  
  Consider Assumption 1. Recalling \eqref{eq:psp}, we need to show
  \begin{align*}
    \delta_N^\forward \sqrt{s_N}\norm{\perturb}{h^\alpha} \longrightarrow 0.
  \end{align*}
  By \eqref{eq:boundofsn} and \eqref{eq:ussing}, we have
  \begin{align*}
    \delta_N^\forward \sqrt{s_N}\norm{\perturb}{h^\alpha} \le \frac{1}{\sqrt{s_N}}\norm{\perturb}{h^\alpha} \lesssim  \frac{D^{\frac{\alpha}{d}}}{\sqrt{s_N}\eigmin{\fishermatrix}} \longrightarrow 0.
  \end{align*}
Here, we pass the limit by \eqref{eq:slim1}.
  
Consider Assumption 2. We need to show
\begin{align*}
  e^{-(b - a)s_N}\sqrt{s_N} \mathbb{E}^\Pi\norm{\theta_N}{h^\alpha} \longrightarrow 0.
\end{align*}
Let
  \begin{align*}
    Z \coloneqq \sum_{i = 1}^\infty \lambda_i^{-\frac{\alpha}{2}} W_i e_i ,\quad \text{where }W_i \stackrel{\text{iid}}{\sim} \mathcal{N}(0, 1).
  \end{align*}
  This defines a Gaussian random variable taking values in $L^2(\mathcal{X})$. By Fernique's theorem \cite[Corollary~2.8.6]{Bogachev1998-aw}, we have $\mathbb{E}\norm{Z}{L^2} < \infty$. Since $P_{E_D}Z/ \sqrt{s_N} \sim \Pi_N$, we see that
  \begin{align*}
    \mathbb{E}^\Pi \norm{\theta_N}{L^2} \le \frac{1}{\sqrt{s_N}} \mathbb{E}\norm{Z}{L^2}.
  \end{align*}
  Recall Lemma~\ref{lem:convproj}, Claim 2, and $D \lesssim N^u$. Then,
  \begin{align*}
    e^{-(b - a)s_N}\sqrt{s_N} \mathbb{E}^\Pi\norm{\theta_N}{h^\alpha} &\lesssim e^{-(b - a)s_N}\sqrt{s_N} D^{\frac{\alpha}{d}} \mathbb{E}^\Pi \norm{\theta_N}{L^2}, \\ &\lesssim e^{-(b - a)s_N}N^{\frac{\alpha u}{d}} \mathbb{E}\norm{Z}{L^2}\longrightarrow 0.
  \end{align*}  
  Here, we pass the limit by polynomial to exponential comparison.
\end{proof}
\section*{Proofs for Section~\ref{sec:schr}}
\begin{proof}[Proof of Theorem~3]
Let $\mathcal{L}_{\truth}^{-1} : L^2(\mathcal{X}) \longrightarrow L^2(\mathcal{X})$ be the self-adjoint solution operator of \eqref{eq:schrodinger} with vanishing boundary condition \cite[Section~A.2]{Nickl2023}. By \cite[Theorem~3.3.1]{Nickl2023}, we can set 
\begin{align*}
\lin \coloneqq \mathcal{L}_{\truth}^{-1} M,
\end{align*}
where $M : L^2(\mathcal{X}) \longrightarrow L^2(\mathcal{X})$ is defined by
\begin{align*}
  M \phi \coloneqq e^{\truth} u_{\truth} \phi, \quad \phi \in L^2(\mathcal{X}).
\end{align*}
We start with Assumptions 1-3 of Theorem~\ref{thm:recov}.

We first argue that $\mathcal{I} : C_c^\infty(\mathcal{X}) \longrightarrow C_c^\infty(\mathcal{X})$ forms a bijection. Then, Assumption~1 holds for some $\phi_1, \ldots, \phi_k \in C_c^\infty(\mathcal{X})$. Clearly,
\begin{align*}
  \mathcal{I} = \lin^* \lin = M \lr{ \mathcal{L}_{\truth}^{-1} }^2 M.
\end{align*}
Furthermore, we can argue by \cite[Theorem~6]{Evans2022-vw} that $\mathcal{L}_{\truth}^{-1} : C_c^\infty(\mathcal{X}) \longrightarrow C_c^\infty(\mathcal{X})$ forms a bijection. Hence, it suffices to argue that $M : C_c^\infty(\mathcal{X}) \longrightarrow C_c^\infty(\mathcal{X})$ forms a bijection. By the Feynman-Kac formula \cite[Section~A.4]{Nickl2023}, we have $u_{\truth} > 0$. Since $u_{\theta_0} \in C^\infty(\bar{\mathcal{X}})$, we can then check that $M : C_c^\infty(\mathcal{X}) \longrightarrow C_c^\infty(\mathcal{X})$ forms a bijection with inverse
\begin{align*}
  M^{-1}f \coloneqq \frac{1}{e^{\truth} u_{\truth}}f, \quad f \in C_c^\infty(\mathcal{X}).
\end{align*}

We proceed with Assumption 2-3. Arguing by \cite[Proposition~A.5.1]{Nickl2023}, we have
\begin{align*}
  \Vert \lin  h \Vert_{L^2} \gtrsim \Vert h \Vert_{\lr{H^2_0}^*}, \quad h \in L^2(\mathcal{X}).
\end{align*}
This implies Assumption 3. Applying \cite[Lemma~4.9]{Nickl2022} to the above display,
\begin{align}
  \Vert \lin  h \Vert_{L^2} \gtrsim  D^{-2/d} \Vert h \Vert_{L^2}, 
\end{align}
with constant independent of $D$ and $h \in E_D$. Arguing as in Lemma~\ref{lem:stab}, we have
\begin{align*}
  \eigmin{\fishermatrix} \gtrsim D^{-4/d}.
\end{align*}
Assumption~2 now follows by Lemma~\ref{lem:convproj}, Claim 2. 

Theorem~\ref{thm:recov} now yields $i_D^{-1} \longrightarrow L$ for some $L \succ 0$. Since $\lambda_{\text{max}}$ is continuous on the space of symmetric $k \times k$ matrices \cite[Corollary~6.3.8]{Horn_Johnson_2012},
\begin{align*}
  \eigmax{i_D^{-1}} = (1 + o(1))\eigmin{L}.
\end{align*}
Thus, \eqref{eq:diambound} can be written as $C(1 + o_P(1))/\sqrt{N}$ for some constant $C > 0$.
\end{proof}

\section*{General Results}\label{sec:supgen}
        \begin{remark}\label{rem:subseqence}
          Let $H$ be a hypothesis. Let $(X_N), (Y_N), X$ and $Y$ be $\mathbb{R}^n$-valued random variables on $(\Omega, \mathcal{A}, \Ptheta)$. Suppose we wish to prove
          \begin{align*}
            \text{$H$ and $X_N \plim X$ implies $Y_N \plim Y$},
          \end{align*}
          using only properties of $(X_N)$ that are invariant under extracting subsequences. By a standard subsequence argument, we can WLOG assume $X_N \stackrel{\text{a.s}}{\longrightarrow} X$. 

          In particular, $X_N \plim X$ can represent a hypothesis of the form
          \begin{align*}
            \mu_N \dlim \mu, \quad \text{in }\Ptheta\text{- probability},
          \end{align*}
          where $(\mu_N)$ and $\mu$ are random measures. Indeed, the above dispaly is equivalent to $d_{\text{BL}}\lr{\mu_N, \mu} \plim 0$. In this setting, we can thus WLOG assume $d_{\text{BL}}\lr{\mu_N, \mu} \stackrel{\text{a.s}}{\longrightarrow} 0$. Equivalently, we can assume that $\mu_N \dlim \mu$ holds $\Ptheta$-almost surely.
        \end{remark}

        \begin{theorem}\label{lem:conv}
          Let $\curly{F_n}_{n \in \mathbb{N}}$ and $F$ be distribution functions on $\mathbb{R}$ such that
          \begin{align*}
            F_n \dlim F, \quad n \longrightarrow \infty,
          \end{align*}
           where the limit denotes weak convergence of the corresponding probability measures. 
           
           Let $p \in (0, 1)$, and assume $F(x) = p$ has a unique solution $x = q_p$. Suppose $\curly{x_n}_{n \in \mathbb{N}}$ is real-valued sequence that satisfies
           \begin{align*}
            F_n(x_n) = p, \quad \text{for all }n \in \mathbb{N}.
           \end{align*}
           Then, $x_n \longrightarrow q_p$ for $n \longrightarrow \infty$.
        \end{theorem}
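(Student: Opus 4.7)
The plan is to show $x_n \longrightarrow q_p$ by a direct $\varepsilon$-$N$ argument, exploiting the uniqueness of $q_p$ together with weak convergence at continuity points of $F$. Fix $\varepsilon > 0$. I will choose continuity points $a, b$ of $F$ with $q_p - \varepsilon < a < q_p < b < q_p + \varepsilon$; this is possible since the set of discontinuities of a distribution function is at most countable, so continuity points are dense in $\mathbb{R}$.

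The first key step is to convert uniqueness of the solution of $F(x) = p$ into strict inequalities at $a$ and $b$. By monotonicity of $F$, we have $F(x) \le p$ for $x \le q_p$ and $F(x) \ge p$ for $x \ge q_p$. If $F(a) = p$ or $F(b) = p$ held, then $a$ or $b$ would be a solution distinct from $q_p$, contradicting uniqueness. Hence $F(a) < p < F(b)$.

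Next, I invoke weak convergence. Since $a$ and $b$ are continuity points of $F$, $F_n(a) \longrightarrow F(a)$ and $F_n(b) \longrightarrow F(b)$. Combining with the strict inequalities above, there exists $N_0$ such that for all $n \ge N_0$,
\begin{equation*}
F_n(a) < p < F_n(b).
\end{equation*}

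Finally, I derive the two-sided bound on $x_n$ from monotonicity of $F_n$. Suppose $x_n > b$ for some $n \ge N_0$. Then $p = F_n(x_n) \ge F_n(b) > p$, which is impossible. Similarly, if $x_n < a$, then $p = F_n(x_n) \le F_n(a) < p$, again impossible. Therefore $a \le x_n \le b$, which gives $\lvert x_n - q_p \rvert < \varepsilon$ for every $n \ge N_0$. Since $\varepsilon$ was arbitrary, $x_n \longrightarrow q_p$.

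The only mildly delicate point is the use of the uniqueness hypothesis: without it, $F$ could be flat equal to $p$ on a nontrivial interval and the argument for $F(a) < p < F(b)$ would fail near the endpoints. All other steps are routine properties of weak convergence and monotonicity.
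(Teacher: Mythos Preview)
Your proof is correct and follows essentially the same approach as the paper's: pick continuity points of $F$ near $q_p$, use uniqueness of the solution to $F(x)=p$ to obtain strict inequalities $F(a)<p<F(b)$, pass these to $F_n$ via weak convergence, and trap $x_n$ by monotonicity. The only cosmetic difference is that the paper treats one side and says the other is analogous, while you handle both sides simultaneously.
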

        \begin{proof}
          The proof has similarities with \cite[Chapter 14, Proposition~5]{Fristedt1997}. Let $\varepsilon > 0$. We have to show $\lvert q_p - x_n \rvert < \varepsilon$ for $n$ large enough. We will only show $x_n < q_p + \varepsilon$ for $n$ large enough. The other inequality follows similarly.
          
          Since $F$ has countably many discontinuities, we can pick $\delta \in (0 ,\varepsilon)$ such that $F$ is continuous at $q_p + \delta$. Then, 
          \begin{align}\label{eq:distconv1}
            F_n(q_p + \delta) \longrightarrow F(q_p + \delta), \quad n \longrightarrow \infty.
          \end{align}
          Since $q_p$ is the unique solution to $F(x) = p$, we have
          \begin{align*}
            p = F(q_p) < F(q_p + \delta).
          \end{align*}
          Thus, recalling \eqref{eq:distconv1},
          \begin{align*}
            p < F_n(q_p + \delta), \quad \text{for }n \text{ large enough}.
          \end{align*}
          However, $p = F_n(x_n)$, so
          \begin{align*}
            F_n(x_n) < F_n(q_p + \delta), \quad \text{for }n \text{ large enough}.
          \end{align*}
          Since $F_n$ is non-decreasing and $\delta < \varepsilon$, we conclude that
          \begin{align*}
            x_n < q_p + \delta < q_p + \varepsilon, \quad \text{for }n \text{ large enough}.
          \end{align*}
        \end{proof}
        \begin{theorem}\label{thm:empb}
        Let $A$ be a separable metric space. Let $\mathcal{X}$ be a measurable space. Let $\mathcal{F} = \curly{f_\alpha : \alpha \in A}$ be a class of measurable real-valued functions on $\mathcal{X}$ with $0 \in \mathcal{F}$. Let $F : \mathcal{X} \longrightarrow (0, \infty)$ be measurable with $F(x) \ge \sup_{f \in \mathcal{F}} f(x)$ for $x \in \mathcal{X}$. Let $P$ be a probability measure on $\mathcal{X}$ such that $\mathcal{F} \subset L^2(P)$. Let $X_1, \ldots, X_n \stackrel{\text{iid}}{\sim} P$. Assume $\alpha \in A \mapsto f_\alpha(x) $ is continuous for all $x \in \mathcal{X}$. Let $s \ge \sup_{f \in \mathcal{F}} \Vert f\Vert_{L^2(P)}$. Let $\mu \coloneqq \mathbb{E}f(X_1)$. Then,
        \begin{align*}
          \mathbb{E}\sup_{f \in \mathcal{F}} \left\lvert \sum_{i = 1}^n  \lr{f(X_i) - \mu} \right\rvert \le C \max\left\{ \sqrt{N}\Vert F \Vert_{L^2(P)}I(d), u \lr{\frac{I(d)}{d}}^2  \right \},
        \end{align*}
        where 
        \begin{enumerate}
          \item $I(t) \coloneqq  \int_0^t \sup_Q \sqrt{\log 2 N(\mathcal{F}, \Vert \cdot \Vert_{L^2(Q)}, \varepsilon \Vert F \Vert_{L^2(Q)})} \, d\varepsilon$ for $t > 0,$ where $Q$ represents a probability measure on $\mathcal{X}$ with finite support.
          \item $u \coloneqq \Vert \text{max}_{i\le n} F(X_i) \Vert_{L^2(P)}$.
          \item $d \coloneqq s/\Vert F \Vert_{L^2(P)}$.
          \item $C$ is a universal constant.
         \end{enumerate}
        \end{theorem}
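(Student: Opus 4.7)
The plan is the standard symmetrization-plus-chaining argument for empirical process suprema, as in \cite[Theorem~3.5.4]{Giné_Nickl_2021}. Separability of $A$ together with continuity of $\alpha \mapsto f_\alpha(x)$ renders the supremum measurable via a countable dense subset of $A$, so all expectations are well defined.

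First, I would symmetrize: standard arguments give
\begin{align*}
\mathbb{E}\sup_{f \in \mathcal{F}}\Bigl|\sum_{i=1}^n (f(X_i) - \mu)\Bigr| \le 2\,\mathbb{E}\sup_{f \in \mathcal{F}}\Bigl|\sum_{i=1}^n \epsilon_i f(X_i)\Bigr|,
\end{align*}
where $(\epsilon_i)$ are i.i.d.\ Rademacher signs independent of $(X_i)$. Conditionally on $(X_i)$, the process $f \mapsto \sum \epsilon_i f(X_i)$ has sub-Gaussian increments under the empirical pseudo-metric $\norm{\cdot}{L^2(P_n)}$, with sup-norm envelope $\max_{i \le n} F(X_i)$. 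I would then apply two-regime generic chaining (Dudley's integral for the sub-Gaussian part, together with a Bernstein residual to absorb the contribution of the unbounded envelope) to obtain, conditionally on $(X_i)$,
\begin{align*}
\mathbb{E}_\epsilon \sup_{f\in\mathcal{F}}\Bigl|\sum \epsilon_i f(X_i)\Bigr| \lesssim \sqrt{n}\,\norm{F}{L^2(P_n)}\widetilde{I}(d_n) + \max_{i\le n}F(X_i)\left(\frac{\widetilde{I}(d_n)}{d_n}\right)^2,
\end{align*}
with $d_n = \sigma_n/\norm{F}{L^2(P_n)}$, $\sigma_n^2 = \sup_{f\in\mathcal{F}} P_n f^2$, and $\widetilde{I}$ the entropy integral along $L^2(P_n)$. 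Bounding $N(\mathcal{F}, \norm{\cdot}{L^2(P_n)}, \cdot)$ pointwise by the uniform covering number replaces $\widetilde{I}$ by $I$.

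Taking outer expectations over $(X_i)$ then closes the argument. Jensen's inequality gives $\mathbb{E}\norm{F}{L^2(P_n)} \le \norm{F}{L^2(P)}$ and $\mathbb{E}\sigma_n^2 \le s^2$, while the definition of $u$ controls $\mathbb{E}\max_i F(X_i)$. Combining these with monotonicity of $I$ produces the two regimes $\sqrt{n}\,\norm{F}{L^2(P)} I(d)$ and $u\,(I(d)/d)^2$ in the stated maximum, after standard desymmetrization.

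The main obstacle is that $d_n$ is a random quantity and $I$ is only monotone, so the substitution $d_n \rightsquigarrow d$ inside $I$ is not automatic. This is typically handled by a peeling argument in which one partitions the range of $\sigma_n$ dyadically and applies Talagrand's concentration inequality for the supremum on each piece, recombining via a union bound. The resulting multiplicative overhead is absorbed into the universal constant $C$, at the cost of some bookkeeping that one has to perform carefully in order to avoid an extra logarithmic factor.
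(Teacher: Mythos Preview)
Your sketch is a plausible outline of the from-scratch argument underlying \cite[Theorem~3.5.4]{Giné_Nickl_2021}, but it is far more laborious than what is needed here, and the peeling step you flag as ``the main obstacle'' is genuinely nontrivial to execute without picking up extra factors. The paper takes a much shorter route: it simply \emph{invokes} \cite[Theorem~3.5.4]{Giné_Nickl_2021} as a black box and only argues the two small deltas between that statement and the present one. First, the cited result takes $s = \sup_{f\in\mathcal{F}}\Vert f\Vert_{L^2(P)}$ exactly, whereas here $s$ is merely an upper bound; inspection of the proof shows this only enters through $d = s/\Vert F\Vert_{L^2(P)}$ and monotonicity of $t\mapsto I(t)$ and $t\mapsto I(t)/t$, so the bound still holds. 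Second, the cited result assumes $\mathcal{F}$ countable, whereas here $\mathcal{F}$ is indexed by a separable metric space with $\alpha\mapsto f_\alpha(x)$ continuous; one fixes a countable dense $\{\alpha_m\}\subset A$, applies the cited theorem to $\tilde{\mathcal{F}} = \{f_{\alpha_m}\}$ (reusing $F,s,u,d$ and noting $\tilde I\le I$ since $\tilde{\mathcal{F}}\subset\mathcal{F}$), and then observes that continuity plus density give $\sup_{\tilde{\mathcal{F}}}|\cdot| = \sup_{\mathcal{F}}|\cdot|$ pointwise.

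In short: you correctly identified separability/continuity as the device for measurability, but you buried it as a preliminary remark and then embarked on reproving the entire empirical-process bound. The paper instead uses that same device as the \emph{whole} proof, reducing to the countable case and citing the result. Your route would eventually get there, but the concentration/peeling machinery you propose is exactly what the citation is meant to spare you.
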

        \begin{proof}
          This is \cite[Theorem~3.5.4]{Giné_Nickl_2021} with the following modifications:
          \begin{enumerate}
            \item Instead of letting $s \coloneqq \sup_{f \in \mathcal{F}} \Vert f\Vert_{L^2(P)}$, we allow a bound $s \ge \sup_{f \in \mathcal{F}} \Vert f\Vert_{L^2(P)}$. By inspection, the proof still works.
            \item We do not require that $\mathcal{F}$ is countable. Instead, we require that $A$ is separable and that $\alpha \in A \mapsto f_\alpha(x) $ is continuous for all $x \in \mathcal{X}$.
          \end{enumerate}
          We justify the second modification. Let $\curly{\alpha_m}_{m \in \mathbb{N}} \subset A$ be dense and define $\tilde{\mathcal{F}} \coloneqq \curly{f_{\alpha_m}}_{m \in \mathbb{N}}$. We can WLOG assume that $0 \in \tilde{\mathcal{F}}$. We now apply \cite[Theorem~3.5.4]{Giné_Nickl_2021} to $\tilde{\mathcal{F}}$, where we 'reuse' the quantities $F, s, u, d$. The quantity corresponding to $I(t)$ will have to be redefined to
          \begin{align*}
            \tilde{I}(t) \coloneqq \int_0^t \sup_Q \sqrt{\log 2 N(\tilde{\mathcal{F}}, \Vert \cdot \Vert_{L^2(Q)}, \varepsilon \Vert F \Vert_{L^2(Q)})} \, d\varepsilon, \quad t > 0.
          \end{align*}
          However, since $\tilde{\mathcal{F}} \subset \mathcal{F}$, it is clear that $\tilde{I}(t) \le I(t)$. We now conclude that
          \begin{align*}
            \mathbb{E}\sup_{f \in \mathcal{\tilde{F}}} \left\lvert \sum_{i = 1}^n  \lr{f(X_i) - \mu} \right\rvert \le C \max\left\{ \sqrt{N}\Vert F \Vert_{L^2(P)}I(d),  u I^2(d)/ d^2  \right \}.
          \end{align*}
          We now wish to replace the above supremum over $\mathcal{\tilde{F}}$ by one over $\mathcal{F}$. Since 
          \begin{align*}
            \alpha \in A \mapsto \abs{\sum_{i = 1}^n \lr{f_\alpha(X_i) - \mu}}
          \end{align*}
          is continuous, and $\curly{\alpha_m}_{m \in \mathbb{N}} \subset A$ is dense,
          \begin{align*}
            \sup_{f \in \tilde{\mathcal{F}}} \left\lvert \sum_{i = 1}^n  f(X_i) \right\rvert = \sup_{m \in \mathbb{N}} \left\lvert \sum_{i = 1}^n  \lr{f_{\alpha_m}(X_i) - \mu} \right\rvert = \sup_{\alpha \in A} \left\lvert \sum_{i = 1}^n \lr{ f_\alpha(X_i) - \mu} \right\rvert = \sup_{f \in \mathcal{F}} \left\lvert \sum_{i = 1}^n  f(X_i) \right\rvert.
          \end{align*}
        \end{proof}    

\bibliographystyle{imsart-number} 
\bibliography{bibliography}       

\begin{thebibliography}{26}

\bibitem{beskos2017geometric}
\begin{barticle}[author]
\bauthor{\bsnm{Beskos},~\bfnm{Alexandros}\binits{A.}}, \bauthor{\bsnm{Girolami},~\bfnm{Mark}\binits{M.}}, \bauthor{\bsnm{Lan},~\bfnm{Shiwei}\binits{S.}}, \bauthor{\bsnm{Farrell},~\bfnm{Patrick~E.}\binits{P.~E.}} \AND \bauthor{\bsnm{Stuart},~\bfnm{Andrew~M.}\binits{A.~M.}}
(\byear{2017}).
\btitle{Geometric MCMC for infinite-dimensional inverse problems}.
\bjournal{Journal of Computational Physics}
\bvolume{335}
\bpages{327-351}.
\bdoi{https://doi.org/10.1016/j.jcp.2016.12.041}
\end{barticle}
\endbibitem

\bibitem{doi:10.1137/1.9780898719895}
\begin{bbook}[author]
\bauthor{\bsnm{Bhattacharya},~\bfnm{Rabi~N.}\binits{R.~N.}} \AND \bauthor{\bsnm{Rao},~\bfnm{R.~Ranga}\binits{R.~R.}}
(\byear{2010}).
\btitle{Normal Approximation and Asymptotic Expansions}.
\bpublisher{Society for Industrial and Applied Mathematics}.
\bdoi{10.1137/1.9780898719895}
\end{bbook}
\endbibitem

\bibitem{Billingsley1995-ly}
\begin{bbook}[author]
\bauthor{\bsnm{Billingsley},~\bfnm{Patrick}\binits{P.}}
(\byear{1995}).
\btitle{Probability and Measure},
\bedition{3} ed.
\bseries{Wiley Series in Probability \& Mathematical Statistics: Probability \& Mathematical Statistics}.
\bpublisher{John Wiley \& Sons}, \baddress{Nashville, TN}.
\end{bbook}
\endbibitem

\bibitem{Bogachev1998-aw}
\begin{bbook}[author]
\bauthor{\bsnm{Bogachev},~\bfnm{Vladimir~I}\binits{V.~I.}}
(\byear{1998}).
\btitle{Gaussian Measures}.
\bseries{Mathematical Surveys and Monographs}.
\bpublisher{American Mathematical Society}, \baddress{Providence, RI}.
\end{bbook}
\endbibitem

\bibitem{castillo2014bernstein}
\begin{barticle}[author]
\bauthor{\bsnm{Castillo},~\bfnm{Isma{\"e}l}\binits{I.}} \AND \bauthor{\bsnm{Nickl},~\bfnm{Richard}\binits{R.}}
(\byear{2014}).
\btitle{On the Bernstein-von Mises phenomenon for nonparametric Bayes procedures}.
\bjournal{The Annals of Statistics}
\bvolume{42}
\bpages{1941--1969}.
\bdoi{10.1214/14-AOS1257}
\end{barticle}
\endbibitem

\bibitem{Castillo2015}
\begin{barticle}[author]
\bauthor{\bsnm{Castillo},~\bfnm{Ismaël}\binits{I.}} \AND \bauthor{\bsnm{Rousseau},~\bfnm{Judith}\binits{J.}}
(\byear{2015}).
\btitle{A Bernstein–von Mises theorem for smooth functionals in semiparametric models}.
\bjournal{The Annals of Statistics}
\bvolume{43}.
\bdoi{10.1214/15-aos1336}
\end{barticle}
\endbibitem

\bibitem{Castillo2021}
\begin{barticle}[author]
\bauthor{\bsnm{Castillo},~\bfnm{Ismaël}\binits{I.}} \AND \bauthor{\bparticle{van~der} \bsnm{Pas},~\bfnm{Stéphanie}\binits{S.}}
(\byear{2021}).
\btitle{Multiscale Bayesian survival analysis}.
\bjournal{The Annals of Statistics}
\bvolume{49}.
\bdoi{10.1214/21-aos2097}
\end{barticle}
\endbibitem

\bibitem{cotter2013mcmc}
\begin{barticle}[author]
\bauthor{\bsnm{Cotter},~\bfnm{S.~L.}\binits{S.~L.}}, \bauthor{\bsnm{Roberts},~\bfnm{G.~O.}\binits{G.~O.}}, \bauthor{\bsnm{Stuart},~\bfnm{A.~M.}\binits{A.~M.}} \AND \bauthor{\bsnm{White},~\bfnm{D.}\binits{D.}}
(\byear{2013}).
\btitle{MCMC Methods for Functions: Modifying Old Algorithms to Make Them Faster}.
\bjournal{Statistical Science}
\bvolume{28}.
\bdoi{10.1214/13-sts421}
\end{barticle}
\endbibitem

\bibitem{Diestel1977-id}
\begin{bbook}[author]
\bauthor{\bsnm{Diestel},~\bfnm{Joe}\binits{J.}} \AND \bauthor{\bsnm{Uhl},~\bfnm{J~J}\binits{J.~J.} \bsuffix{Jr}}
(\byear{1977}).
\btitle{Vector Measures}.
\bseries{Mathematical Surveys and Monographs}.
\bpublisher{American Mathematical Society}, \baddress{Providence, RI}.
\end{bbook}
\endbibitem

\bibitem{Drbek2013}
\begin{bbook}[author]
\bauthor{\bsnm{Drábek},~\bfnm{Pavel}\binits{P.}} \AND \bauthor{\bsnm{Milota},~\bfnm{Jaroslav}\binits{J.}}
(\byear{2013}).
\btitle{Methods of Nonlinear Analysis: Applications to Differential Equations}.
\bpublisher{Springer Basel}.
\bdoi{10.1007/978-3-0348-0387-8}
\end{bbook}
\endbibitem

\bibitem{Engl1996-qv}
\begin{bbook}[author]
\bauthor{\bsnm{Engl},~\bfnm{Heinz~W}\binits{H.~W.}}, \bauthor{\bsnm{Hanke},~\bfnm{Martin}\binits{M.}} \AND \bauthor{\bsnm{Neubauer},~\bfnm{Gunther}\binits{G.}}
(\byear{1996}).
\btitle{Regularization of Inverse Problems},
\bedition{1996} ed.
\bseries{Mathematics and Its Applications}.
\bpublisher{Springer}, \baddress{Dordrecht, Netherlands}.
\end{bbook}
\endbibitem

\bibitem{Evans2022-vw}
\begin{bbook}[author]
\bauthor{\bsnm{Evans},~\bfnm{Lawrence~C}\binits{L.~C.}}
(\byear{2022}).
\btitle{Partial differential equations}.
\bpublisher{American Mathematical Society}, \baddress{Providence, RI}.
\end{bbook}
\endbibitem

\bibitem{Fristedt1997}
\begin{bbook}[author]
\bauthor{\bsnm{Fristedt},~\bfnm{Bert}\binits{B.}} \AND \bauthor{\bsnm{Gray},~\bfnm{Lawrence}\binits{L.}}
(\byear{1997}).
\btitle{A Modern Approach to Probability Theory}.
\bpublisher{Birkh\"{a}user Boston}.
\bdoi{10.1007/978-1-4899-2837-5}
\end{bbook}
\endbibitem

\bibitem{Giné_Nickl_2021}
\begin{bbook}[author]
\bauthor{\bsnm{Giné},~\bfnm{Evarist}\binits{E.}} \AND \bauthor{\bsnm{Nickl},~\bfnm{Richard}\binits{R.}}
(\byear{2021}).
\btitle{Mathematical Foundations of Infinite-Dimensional Statistical Models}.
\bseries{Cambridge Series in Statistical and Probabilistic Mathematics}.
\bpublisher{Cambridge University Press}.
\end{bbook}
\endbibitem

\bibitem{Horn_Johnson_2012}
\begin{bbook}[author]
\bauthor{\bsnm{Horn},~\bfnm{Roger~A.}\binits{R.~A.}} \AND \bauthor{\bsnm{Johnson},~\bfnm{Charles~R.}\binits{C.~R.}}
(\byear{2012}).
\btitle{Matrix Analysis},
\bedition{2} ed.
\bpublisher{Cambridge University Press}.
\end{bbook}
\endbibitem

\bibitem{Lee2012}
\begin{bbook}[author]
\bauthor{\bsnm{Lee},~\bfnm{John~M.}\binits{J.~M.}}
(\byear{2012}).
\btitle{Introduction to Smooth Manifolds}.
\bpublisher{Springer New York}.
\bdoi{10.1007/978-1-4419-9982-5}
\end{bbook}
\endbibitem

\bibitem{Monard_2021}
\begin{barticle}[author]
\bauthor{\bsnm{Monard},~\bfnm{Fran{\c{c}}ois}\binits{F.}}, \bauthor{\bsnm{Nickl},~\bfnm{Richard}\binits{R.}} \AND \bauthor{\bsnm{Paternain},~\bfnm{Gabriel~P.}\binits{G.~P.}}
(\byear{2021}).
\btitle{{Statistical guarantees for Bayesian uncertainty quantification in nonlinear inverse problems with Gaussian process priors}}.
\bjournal{The Annals of Statistics}
\bvolume{49}
\bpages{3255 -- 3298}.
\bdoi{10.1214/21-AOS2082}
\end{barticle}
\endbibitem

\bibitem{Nickl_2020}
\begin{barticle}[author]
\bauthor{\bsnm{Nickl},~\bfnm{Richard}\binits{R.}}
(\byear{2020}).
\btitle{Bernstein–von Mises theorems for statistical inverse problems I: Schr\"{o}dinger equation}.
\bjournal{Journal of the European Mathematical Society}
\bvolume{22}
\bpages{2697–2750}.
\bdoi{10.4171/jems/975}
\end{barticle}
\endbibitem

\bibitem{Nickl2023}
\begin{bbook}[author]
\bauthor{\bsnm{Nickl},~\bfnm{Richard}\binits{R.}}
(\byear{2023}).
\btitle{Bayesian Non-linear Statistical Inverse Problems}.
\bpublisher{EMS Press}.
\bdoi{10.4171/zlam/30}
\end{bbook}
\endbibitem

\bibitem{Nickl2023x}
\begin{binbook}[author]
\bauthor{\bsnm{Nickl},~\bfnm{Richard}\binits{R.}} \AND \bauthor{\bsnm{Paternain},~\bfnm{Gabriel~P.}\binits{G.~P.}}
(\byear{2023}).
\btitle{On some information-theoretic aspects of non-linear statistical inverse problems}
In \bbooktitle{International Congress of Mathematicians}
\bpages{5516–5538}.
\bpublisher{EMS Press}.
\bdoi{10.4171/icm2022/18}
\end{binbook}
\endbibitem

\bibitem{Nickl2022}
\begin{barticle}[author]
\bauthor{\bsnm{Nickl},~\bfnm{Richard}\binits{R.}} \AND \bauthor{\bsnm{Wang},~\bfnm{Sven}\binits{S.}}
(\byear{2022}).
\btitle{On polynomial-time computation of high-dimensional posterior measures by Langevin-type algorithms}.
\bjournal{Journal of the European Mathematical Society}
\bvolume{26}
\bpages{1031–1112}.
\bdoi{10.4171/jems/1304}
\end{barticle}
\endbibitem

\bibitem{24d82336-c2f2-361f-aeb7-920f25df191e}
\begin{barticle}[author]
\bauthor{\bsnm{Rothenberg},~\bfnm{Thomas~J.}\binits{T.~J.}}
(\byear{1971}).
\btitle{Identification in Parametric Models}.
\bjournal{Econometrica}
\bvolume{39}
\bpages{577--591}.
\end{barticle}
\endbibitem

\bibitem{Supplement}
\begin{barticle}[author]
\bauthor{\bsnm{Rømer},~\bfnm{Gustav}\binits{G.}}
(\byear{2024}).
\btitle{Supplement to “Valid Credible Ellipsoids for Linear Functionals by a Renormalized Bernstein-von Mises Theorem”}.
\end{barticle}
\endbibitem

\bibitem{Schilling2017-ex}
\begin{bbook}[author]
\bauthor{\bsnm{Schilling},~\bfnm{Rene}\binits{R.}}
(\byear{2017}).
\btitle{Measures, Integrals and Martingales},
\bedition{2} ed.
\bpublisher{Cambridge University Press}, \baddress{Cambridge, England}.
\end{bbook}
\endbibitem

\bibitem{Stuart2010}
\begin{barticle}[author]
\bauthor{\bsnm{Stuart},~\bfnm{A.~M.}\binits{A.~M.}}
(\byear{2010}).
\btitle{Inverse problems: A Bayesian perspective}.
\bjournal{Acta Numerica}
\bvolume{19}
\bpages{451–559}.
\end{barticle}
\endbibitem

\bibitem{Vaart_1998}
\begin{bbook}[author]
\bauthor{\bsnm{Vaart},~\bfnm{A.~W. van~der}\binits{A.~W. v.~d.}}
(\byear{1998}).
\btitle{Asymptotic Statistics}.
\bseries{Cambridge Series in Statistical and Probabilistic Mathematics}.
\bpublisher{Cambridge University Press}.
\end{bbook}
\endbibitem

\end{thebibliography}
\end{document}